%
%
%
%
%
%
%
%
\documentclass[10pt,twoside]{article}
%
%
%
%

\def\YEAR{\year}\newcount\VOL\VOL=\YEAR\advance\VOL by-1995
\def\firstpage{1}\def\lastpage{5}
\def\received{}\def\revised{}
\def\communicated{}

\makeatletter
\def\magnification{\afterassignment\m@g\count@}
\def\m@g{\mag=\count@\hsize6.5truein\vsize8.9truein\dimen\footins8truein}
\makeatother

\oddsidemargin1.91cm\evensidemargin1.91cm\voffset1.4cm

\textwidth12.0cm\textheight19.0cm

\font\eightrm=cmr8
\font\caps=cmcsc10                    
\font\Caps=cmcsc10 scaled \magstep1   

%


\pagestyle{myheadings}
\pagenumbering{arabic}
\setcounter{page}{\firstpage}

\makeatletter
\setlength\topmargin {14\p@}
\setlength\headsep   {15\p@}
\setlength\footskip  {25\p@}
\setlength\parindent {20\p@}
\@specialpagefalse\headheight=8.5pt
\def\DocMath{}
\renewcommand{\@evenhead}{%
    \ifnum\thepage>\lastpage\rlap{\thepage}\hfill%
    \else\rlap{\thepage}\slshape\leftmark\hfill{\caps\SAuthor}\hfill\fi}%
\renewcommand{\@oddhead}{%
    \ifnum\thepage=\firstpage{\DocMath\hfill\llap{\thepage}}%
    \else{\slshape\rightmark}\hfill{\caps\STitle}\hfill\llap{\thepage}\fi}%
\makeatother

\def\TSkip{\bigskip}
\newbox\TheTitle{\obeylines\gdef\GetTitle #1
\ShortTitle  #2
\SubTitle    #3
\Author      #4
\ShortAuthor #5
\EndTitle
{\setbox\TheTitle=\vbox{\baselineskip=20pt\let\par=\cr\obeylines%
\halign{\centerline{\Caps##}\cr\noalign{\medskip}\cr#1\cr}}%
	\copy\TheTitle\TSkip\TSkip%
\def\next{#2}\ifx\next\empty\gdef\STitle{#1}\else\gdef\STitle{#2}\fi%
\def\next{#3}\ifx\next\empty%
    \else\setbox\TheTitle=\vbox{\baselineskip=20pt\let\par=\cr\obeylines%
    \halign{\centerline{\caps##} #3\cr}}\copy\TheTitle\TSkip\TSkip\fi%
\centerline{\caps #4}\TSkip\TSkip%
\def\next{#5}\ifx\next\empty\gdef\SAuthor{#4}\else\gdef\SAuthor{#5}\fi%
\ifx\received\empty\relax
    \else\centerline{\eightrm Received: \received}\fi%
\ifx\revised\empty\TSkip%
    \else\centerline{\eightrm Revised: \revised}\TSkip\fi%
\ifx\communicated\empty\relax
    \else\centerline{\eightrm Communicated by \communicated}\fi\TSkip\TSkip%
\catcode'015=5}}\def\Title{\obeylines\GetTitle}
\def\Abstract{\begingroup\narrower
    \parskip=\medskipamount\parindent=0pt{\caps Abstract. }}
\def\EndAbstract{\par\endgroup\TSkip}

\long\def\MSC#1\EndMSC{\def\arg{#1}\ifx\arg\empty\relax\else
     {\par\narrower\noindent%
     2010 Mathematics Subject Classification: #1\par}\fi}

\long\def\KEY#1\EndKEY{\def\arg{#1}\ifx\arg\empty\relax\else
	{\par\narrower\noindent Keywords and Phrases: #1\par}\fi\TSkip}

\newbox\TheAdd\def\Addresses{\vfill\copy\TheAdd\vfill
    \ifodd\number\lastpage\vfill\eject\phantom{.}\vfill\eject\fi}
{\obeylines\gdef\GetAddress #1
\Address #2
\Address #3
\Address #4
\EndAddress
{\def\xs{4.3truecm}\parindent=0pt
\setbox0=\vtop{{\obeylines\hsize=\xs#1\par}}\def\next{#2}
\ifx\next\empty 
     \setbox\TheAdd=\hbox to\hsize{\hfill\copy0\hfill}
\else\setbox1=\vtop{{\obeylines\hsize=\xs#2\par}}\def\next{#3}
\ifx\next\empty 
     \setbox\TheAdd=\hbox to\hsize{\hfill\copy0\hfill\copy1\hfill}
\else\setbox2=\vtop{{\obeylines\hsize=\xs#3\par}}\def\next{#4}
\ifx\next\empty\ 
     \setbox\TheAdd=\vtop{\hbox to\hsize{\hfill\copy0\hfill\copy1\hfill}
                \vskip20pt\hbox to\hsize{\hfill\copy2\hfill}}
\else\setbox3=\vtop{{\obeylines\hsize=\xs#4\par}}
     \setbox\TheAdd=\vtop{\hbox to\hsize{\hfill\copy0\hfill\copy1\hfill}
	        \vskip20pt\hbox to\hsize{\hfill\copy2\hfill\copy3\hfill}}
\fi\fi\fi\catcode'015=5}}\gdef\Address{\obeylines\GetAddress}

\hfuzz=0.1pt\tolerance=2000\emergencystretch=20pt\overfullrule=5pt

\usepackage{amssymb}
\usepackage{mathrsfs}
\usepackage{amsfonts,amsthm}
\usepackage{amsmath,amscd}
\usepackage{amsxtra}     
\usepackage[all,cmtip]{xy}
\usepackage{epsfig}
\usepackage{verbatim}
\usepackage{color}
\usepackage{enumerate}
\usepackage{hyperref}
\usepackage{tikz}
\usetikzlibrary{arrows,matrix,shapes,trees}

\theoremstyle{plain}
 \newtheorem{thm}{Theorem}[section]
 \newtheorem{prop}{Proposition}[section]
 \newtheorem{lem}{Lemma}[section]
 \newtheorem{cor}{Corollary}[section]

 \newtheorem{lem'}{``Lemma''}[section]
 
 \newtheorem*{claim}{Claim}
\theoremstyle{definition}
 \newtheorem{ex}{Example}[section]
 \newtheorem{defn}{Definition}[section]
\theoremstyle{remark}
 \newtheorem{rmk}{Remark}[section]
 \numberwithin{equation}{section}

\newcommand{\N}{{\mathbb N}}
\newcommand{\Q}{{\mathbb Q}}
\newcommand{\Z}{{\mathbb Z}}

\newcommand{\Gm}{\mathbb{G}_m}

\newcommand{\Gml}{\mathbb{G}_{m,\mr{log}}}
\newcommand{\Gmlb}{\bar{\mathbb{G}}_{m,\mr{log}}}

\newcommand{\mr}{\mathrm}
\newcommand{\mc}{\mathcal}

\begin{document}
\Title
			Log abelian varieties over a log point
\ShortTitle
			Log abelian varieties over a log point
\SubTitle

\Author
		Heer Zhao
\ShortAuthor
		Heer Zhao  
\EndTitle

\Abstract
We study (weak) log abelian varieties with constant degeneration in the log flat topology. If the base is a log point, we further study the endomorphism algebras of log abelian varieties. In particular, we prove the dual short exact sequence for isogenies, Poincar\'e complete reducibility theorem for log abelian varieties, and the semi-simplicity of the endomorphism algebras of log abelian varieties.
\EndAbstract

\MSC   
Primary 14D06; Secondary 14K99, 11G99.
\EndMSC

\KEY
log abelian varieties with constant degeneration, endomorphism algebras, Poincar\'e complete reducibility theorem, dual short exact sequence for isogenies.
\EndKEY

\Address
    Heer Zhao
    Fakult\"at f\"ur Mathematik
    Universit\"at Duisburg-Essen
    Essen 45117 
    Germany
    	heer.zhao@gmail.com

\Address

\Address
\Address
\EndAddress
\section{Introduction}
\label{sec:introduction}

As stated in \cite{k-k-n2}, degenerating abelian varieties can not preserve group structure, properness, and smoothness at the same time. Log abelian variety is a construction aimed to make the impossible possible in the world of log geometry. The idea dates back to Kato's construction of log Tate curve in \cite[Sec. 2.2]{kat3}, in which he also conjectured the existence of a general theory of log abelian varieties. The theory finally comes true in \cite{k-k-n1} and \cite{k-k-n2}. 

Log abelian varieties are defined as certain sheaves in the classical \'etale topology in \cite{k-k-n2}, however the log flat topology is needed for studying some problems, for example finite group subobjects of log abelian varieties, $l$-adic realisations of log abelian varieties, logarithmic Dieudonn\'e theory of log abelian varieties and so on. In section \ref{sec2}, we prove that various classical \'etale sheaves from \cite{k-k-n2} are also sheaves for the log flat topology, in particular we prove that (weak) log abelian varieties with constant degeneration are sheaves for the log flat topology, see Theorem \ref{thm1.1}. We compute the first direct image sheaves of \'etale locally finite rank free constant sheaves, for changing to the log flat site from the classical \'etale site, in Lemma \ref{lem1.3}. This lemma can be considered as a supplement or generalisation of \cite[Thm. 4.1]{kat2}. We also reformulate some results from \cite[\S 2, \S 3 and \S 7]{k-k-n2} in the context of the log flat topology. 

In section \ref{sec3}, we focus on the case that the base is a log point. In this case, a log abelian variety is automatically a log abelian variety with constant degeneration. And only in this case, log abelian variety is the counterpart of abelian variety. While for general base, log abelian variety corresponds to abelian scheme. Now one may wonder if various results for abelian variety also hold for log abelian variety. We study isogenies and general homomorphisms between log abelian varieties over a log point. More precisely, we give several equivalent characterisations of isogeny in Proposition \ref{prop2.3}, and prove the dual short exact sequence in Theorem \ref{thm2.1}, Poincar\'e complete reducibility theorem for log abelian varieties in Theorem \ref{thm2.2}, and the finiteness of homomorphism group of log abelian varieties in Theorem \ref{thm2.4}, Corollary \ref{cor2.3}, Corollary \ref{cor2.4}, and Corollary \ref{cor2.5}.

\subsection*{Acknowledgement}
I am grateful to Professor Kazuya Kato for sending me a copy of the paper \cite{k-k-n4} which had been accepted but not yet published when the author started to work on this paper. I thank Professor Chikara Nakayama for very helpful communications concerning the paper \cite{k-k-n4}. I thank Professor Qing Liu for telling me the reference \cite[Rem. 5.4.7. (iii)]{bri1} about semi-abelian varieties. Part of this work was done during the author's informal stay at Professor Gebhard B\"ockle's Arbeitsgruppe, and I thank him for his hospitality and kindness. 

I would like to thank the anonymous referee, whose feedback has greatly improved this article. 

Part of this work has been supported by SFB/TR 45 ``Periods, moduli spaces
and arithmetic of algebraic varieties".

\section{Log abelian varieties with constant degeneration in the log flat topology}\label{sec2}
When dealing with finite subgroup schemes of abelian varieties, one needs to work with the flat topology. Similarly, the log flat topology is needed in the study of log finite group subobjects of log abelian varieties. However, log abelian varieties in \cite{k-k-n2} are defined in the classical \'etale topology. In this section, we are going to reformulate some results from \cite[\S 2, \S 3 and \S 7]{k-k-n2}, which are formulated in the context of classical \'etale topology, in the context of log flat topology. 

Throughout this section, let $S$ be any fs log scheme with its underlying scheme locally noetherian, and $(\mr{fs}/S)$ be the category of fs log schemes over $S$. The log schemes in this section will always be fs log schemes unless otherwise stated. Let $S^{\mr{cl}}_{\mr{\acute{E}t}}$ (resp. $S^{\mr{cl}}_{\mr{fl}}$, resp. $S^{\mr{log}}_{\mr{\acute{E}t}}$, resp. $S^{\mr{log}}_{\mr{fl}}$) be the classical \'etale site (resp. classical flat site, resp. log \'etale site, resp. log flat site)\footnote{Here we are following the terminology from \cite{kat2}. Note that in \cite{k-k-n4} $S^{\mr{cl}}_{\mr{\acute{E}t}}$ is called the strict \'etale site, while $S^{\mr{log}}_{\mr{\acute{E}t}}$ and $S^{\mr{log}}_{\mr{fl}}$ are called the Kummer log \'etale site and the Kummer log flat site respectively.} associated to the category $(\mr{fs}/S)$, and let $\delta=m\circ\varepsilon_{\mr{fl}}:S^{\mr{log}}_{\mr{fl}}\xrightarrow{\varepsilon_{\mr{fl}}} S^{\mr{cl}}_{\mr{fl}}\xrightarrow{m} S^{\mr{cl}}_{\mr{\acute{E}t}}$ be the canonical map of sites. For any inclusion $F\subset G$ of sheaves on $S^{\mr{log}}_{\mr{fl}}$, we denote by $G/F$ the quotient sheaf in the category of sheaves on $S^{\mr{log}}_{\mr{fl}}$ by convention, unless otherwise stated.

We start with the following lemma, which relates the Hom sheaves in the classical \'etale topology to the Hom sheaves in the log flat topology. Although this lemma is somehow trivial, we still formulate it due to its extensive use in this paper.

\begin{lem}\label{lem1.1}
Let $F,G$ be two sheaves on $S^{\mr{cl}}_{\mr{\acute{E}t}}$ which are also sheaves on $S^{\mr{log}}_{\mr{fl}}$. Then we have $\mc{H}om_{S^{\mr{cl}}_{\mr{\acute{E}t}}}(F,G)=\mc{H}om_{S^{\mr{log}}_{\mr{fl}}}(F,G)$, in particular $\mc{H}om_{S^{\mr{cl}}_{\mr{\acute{E}t}}}(F,G)$ is a sheaf on $S^{\mr{log}}_{\mr{fl}}$.
\end{lem}
\begin{proof}
This is clear.
\end{proof}

Now we recall some definitions from \cite{k-k-n2}. Let $G$ be a commutative group scheme over the underlying scheme of $S$ which is an extension of an abelian scheme $B$ by a torus $T$. Let $X$ be the character group of $T$ which is a locally constant sheaf of finite generated free $\Z$-modules for the classical \'etale topology. The sheaf $\Gml$ on $S^{\mr{cl}}_{\mr{\acute{E}t}}$ is defined by
$$\Gml(U)=\Gamma(U,M_U^{\mr{gp}}),$$
the sheaf $T_{\mr{log}}$ on $S^{\mr{cl}}_{\mr{\acute{E}t}}$ is defined by
$$T_{\mr{log}}:=\mc{H}om_{S^{\mr{cl}}_{\mr{\acute{E}t}}}(X,\Gml),$$
and the sheaf $G_{\mr{log}}$ is defined as the push-out of $T_{\mr{log}}\leftarrow T\rightarrow G$ in the category of sheaves on $S^{\mr{cl}}_{\mr{\acute{E}t}}$, see \cite[2.1]{k-k-n2}. 

\begin{prop}\label{prop1.1}
The sheaves $\Gml$, $X$, $T_{\mr{log}}$ and $G_{\mr{log}}$ on $S^{\mr{cl}}_{\mr{\acute{E}t}}$ are also sheaves for the log flat topology. Moreover, $T_{\mr{log}}$ can be alternatively defined as 
$$\mc{H}om_{S^{\mr{log}}_{\mr{fl}}}(X,\Gml),$$
and $G_{\mr{log}}$ can be alternatively defined as the push-out of $T_{\mr{log}}\leftarrow T\rightarrow G$ in the category of sheaves on $S^{\mr{log}}_{\mr{fl}}$.
\end{prop}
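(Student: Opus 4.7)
The plan is to treat the four sheaves in the order $\Gml$, $X$, $T_{\mr{log}}$, $G_{\mr{log}}$, since each later sheaf is built from the earlier ones and descent propagates accordingly.

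The central technical step is log flat descent for $\Gml$. Since $\Gml(U)=\Gamma(U,M_U^{\mr{gp}})$, one must verify that $U\mapsto\Gamma(U,M_U^{\mr{gp}})$ satisfies the sheaf axiom for log flat covers. A log flat cover factors locally into its underlying classical flat part and a Kummer log flat part, so the two cases can be handled separately. The classical flat case follows from the short exact sequence $1\to\mc{O}^*\to M^{\mr{gp}}\to\overline{M}^{\mr{gp}}\to 1$, faithfully flat descent for $\mc{O}^*$, and the insensitivity of the characteristic $\overline{M}^{\mr{gp}}$ to flat base change. The Kummer case is the log flat descent theorem of Kato (of which the forthcoming Lemma \ref{lem1.1} is a generalisation). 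For $X$: since $X$ is \'etale locally isomorphic to $\Z^r$, it is representable by an \'etale $S$-scheme, hence a sheaf for the fpqc and a fortiori for the log flat topology by faithfully flat descent.

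For $T_{\mr{log}}$: once $X$ and $\Gml$ are both log flat sheaves, the internal Hom is computed the same way in either topology. Explicitly, for $U\in(\mr{fs}/S)$, a morphism of classical \'etale sheaves $X|_U\to\Gml|_U$ is the same datum as a morphism of log flat sheaves, because a morphism between objects that are sheaves in both topologies does not depend on which topology one remembers. Hence $\mc{H}om_{S^{\mr{cl}}_{\mr{\acute{E}t}}}(X,\Gml)$ coincides with $\mc{H}om_{S^{\mr{log}}_{\mr{fl}}}(X,\Gml)$ as presheaves and is a sheaf in both topologies.

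For $G_{\mr{log}}$: the push-out of $T_{\mr{log}}\leftarrow T\rightarrow G$ is, in any topos, the sheaf quotient $(T_{\mr{log}}\times_S G)/T$ by the antidiagonal $T$-action; equivalently, it fits into an extension $0\to G\to G_{\mr{log}}\to T_{\mr{log}}/T\to 0$. The quotient $T_{\mr{log}}/T\cong\mc{H}om(X,\Gml/\Gm)$ agrees in the two topologies by the same argument as for $T_{\mr{log}}$, and $G$ is representable, so the classical \'etale push-out already satisfies the universal property in the log flat topos. The main obstacle in the whole proof is the log flat descent for $\Gml$ itself; once that is granted, the remaining steps follow essentially formally from topos-theoretic compatibilities and standard descent for representable functors.
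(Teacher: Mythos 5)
Your treatment of $\Gml$, $X$ and $T_{\mr{log}}$ is sound and matches the paper's route: $\Gml$ is Kato's log flat descent theorem, $X$ is representable hence a sheaf, and the internal Hom of two objects that are sheaves for both topologies is computed identically in either one. The gap is in the $G_{\mr{log}}$ step. You claim that the quotient $T_{\mr{log}}/T\cong\mc{H}om(X,\Gml/\Gm)$ ``agrees in the two topologies by the same argument as for $T_{\mr{log}}$.'' That argument does not transfer: it applied to a Hom sheaf whose source and target are each sheaves for both topologies, whereas $\Gml/\Gm$ (and likewise $T_{\mr{log}}/T$) is a quotient, i.e.\ a sheafification, and the classical \'etale and log flat sheafifications genuinely differ. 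Concretely, $\delta_*\bigl(\Gml/\Gm\bigr)=(\Gml/\Gm)_{S^{\mr{cl}}_{\mr{\acute{E}t}}}\otimes_{\Z}\Q$ (Lemma \ref{lem1.3}(1) of the paper): Kummer log flat covers extract roots of the log structure, so the \'etale quotient is \emph{not} a log flat sheaf, and neither is $(T_{\mr{log}}/T)_{S^{\mr{cl}}_{\mr{\acute{E}t}}}$. Consequently the extension $0\to G\to G_{\mr{log}}\to (T_{\mr{log}}/T)_{S^{\mr{cl}}_{\mr{\acute{E}t}}}\to 0$ has a quotient term that fails to be a log flat sheaf, and your conclusion that ``the classical \'etale push-out already satisfies the universal property in the log flat topos'' does not follow from it.

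The repair is to present $G_{\mr{log}}$ by the \emph{other} exact sequence, $0\to T_{\mr{log}}\to G_{\mr{log}}\to B\to 0$, whose outer terms are already known to be log flat sheaves ($T_{\mr{log}}$ by your previous step, $B$ by representability). Comparing this row with $0\to T_{\mr{log}}\to\delta_*\delta^*G_{\mr{log}}\to B\to R^1\delta_*T_{\mr{log}}$ via the adjunction $(\delta^*,\delta_*)$, and using $R^1\delta_*T_{\mr{log}}=0$ (Kato's logarithmic Hilbert 90), the five lemma gives $G_{\mr{log}}\xrightarrow{\;\sim\;}\delta_*\delta^*G_{\mr{log}}$, so $G_{\mr{log}}$ is a log flat sheaf. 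Your final observation is then correct once stated properly: the log flat push-out is the log flat sheafification of the classical \'etale push-out (every \'etale cover being a log flat cover), so a classical \'etale push-out that happens to be a log flat sheaf is the log flat push-out. Note also that even the identification $T_{\mr{log}}/T\cong\mc{H}om_{S^{\mr{log}}_{\mr{fl}}}(X,\Gml/\Gm)$ you invoke is not formal; it uses the vanishing of $\mc{E}xt_{S^{\mr{log}}_{\mr{fl}}}(X,\Gm)$ and is proved separately in Proposition \ref{prop1.2}, logically after the present statement.
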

\begin{proof}
The statement for $\Gml$ is just \cite[Thm. 3.2]{kat2}, see also \cite[Cor. 2.22]{niz1}. Being representable by a group scheme, $X$ is a sheaf on $S^{\mr{log}}_{\mr{fl}}$ by \cite[Thm. 3.1]{kat2} and \cite[Thm. 5.2]{k-k-n4}. It follows then $T_{\mr{log}}=\mc{H}om_{S^{\mr{cl}}_{\mr{\acute{E}t}}}(X,\Gml)=\mc{H}om_{S^{\mr{log}}_{\mr{fl}}}(X,\Gml)$ is also a sheaf on $S^{\mr{log}}_{\mr{fl}}$. By its definition $G_{\mr{log}}$ fits into a short exact sequence $0\rightarrow T_{\mr{log}}\rightarrow G_{\mr{log}}\rightarrow B\rightarrow 0$ of sheaves on $S^{\mr{cl}}_{\mr{\acute{E}t}}$. Consider the following commutative diagram
$$
\xymatrix{
0\ar[r] &T_{\mr{log}}\ar[r]\ar[d]^{=} &G_{\mr{log}}\ar[r]\ar[d] &B\ar[r]\ar[d]^{=} &0 \\
0\ar[r] &T_{\mr{log}}\ar[r]  &\delta_*\delta^*G_{\mr{log}}\ar[r]  &B \ar[r] &R^1\delta_*T_{\mr{log}}
}$$
with exact rows in the category of sheaves on $S^{\mr{cl}}_{\mr{\acute{E}t}}$, where the vertical maps come from the adjunction $(\delta^*,\delta_*)$. The sheaf $R^1\delta_*T_{\mr{log}}$ is zero by Kato's logarithmic Hilbert 90, see \cite[Cor. 5.2]{kat2} or \cite[Thm. 3.20]{niz1}. It follows that the canonical map $G_{\mr{log}}\rightarrow \delta_*\delta^*G_{\mr{log}}$ is an isomorphism, whence $G_{\mr{log}}$ is a sheaf on $S^{\mr{log}}_{\mr{fl}}$. Since $G_{\mr{log}}$, as a push-out of $T_{\mr{log}}\leftarrow T\rightarrow G$ in the category of sheaves on $S^{\mr{cl}}_{\mr{\acute{E}t}}$, is already a sheaf on $S^{\mr{log}}_{\mr{fl}}$, it coincides with the push-out of $T_{\mr{log}}\leftarrow T\rightarrow G$ in the category of sheaves on $S^{\mr{log}}_{\mr{fl}}$.
\end{proof}

\begin{prop}\label{prop1.2}
We have canonical isomorphisms $$\mc{H}om_{S^{\mr{log}}_{\mr{fl}}}(X,\Gml/\Gm)\cong T_{\mr{log}}/T\cong G_{\mr{log}}/G.$$
\end{prop}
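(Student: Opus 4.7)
The plan is to establish the two isomorphisms separately, using short exact sequences already in play and standard homological algebra.

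For the first isomorphism $T_{\mr{log}}/T\cong\mc{H}om_{S^{\mr{log}}_{\mr{fl}}}(X,\Gml/\Gm)$, I would start from the tautological short exact sequence
$$0\to\Gm\to\Gml\to\Gml/\Gm\to 0$$
of sheaves on $S^{\mr{log}}_{\mr{fl}}$ and apply the functor $\mc{H}om_{S^{\mr{log}}_{\mr{fl}}}(X,-)$. This yields the exact sequence
$$0\to\mc{H}om(X,\Gm)\to\mc{H}om(X,\Gml)\to\mc{H}om(X,\Gml/\Gm)\to\mc{E}xt^1(X,\Gm).$$
By definition $\mc{H}om(X,\Gm)=T$, and by Proposition \ref{prop1.1} we have $\mc{H}om_{S^{\mr{log}}_{\mr{fl}}}(X,\Gml)=T_{\mr{log}}$. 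Since $X$ is étale locally isomorphic to $\Z^n$, and $\Z$ is projective over itself, $\mc{E}xt^1(X,\Gm)$ is locally zero, hence vanishes as a sheaf. Identifying the terms gives the claimed isomorphism.

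For the second isomorphism $T_{\mr{log}}/T\cong G_{\mr{log}}/G$, I would exploit the commutative diagram of short exact sequences
$$\xymatrix{
0\ar[r] & T\ar[r]\ar[d] & G\ar[r]\ar[d] & B\ar[r]\ar[d]^{=} & 0 \\
0\ar[r] & T_{\mr{log}}\ar[r] & G_{\mr{log}}\ar[r] & B\ar[r] & 0
}$$
where the top row is the defining sequence of $G$, the bottom row is the one used in the proof of Proposition \ref{prop1.1}, and the left square is the push-out defining $G_{\mr{log}}$. Both vertical maps $T\to T_{\mr{log}}$ and $G\to G_{\mr{log}}$ are injective (the former because $\Gm\hookrightarrow\Gml$, the latter by the snake lemma since the cokernel map on $B$ is zero). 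Applying the snake lemma then gives a short exact sequence $0\to T_{\mr{log}}/T\to G_{\mr{log}}/G\to 0$, which is exactly the desired isomorphism.

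There is no serious obstacle in this argument; the only points requiring a moment of care are the vanishing of $\mc{E}xt^1(X,\Gm)$ on $S^{\mr{log}}_{\mr{fl}}$ (which uses the local freeness of $X$ in the étale topology, and is legitimate because locally constant étale sheaves are still sheaves in the log flat topology by Proposition \ref{prop1.1}), and the fact that all quotients are formed in the category of sheaves on $S^{\mr{log}}_{\mr{fl}}$ per the convention fixed at the beginning of the section.
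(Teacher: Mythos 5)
Your proof is correct and follows essentially the same route as the paper: the first isomorphism via applying $\mc{H}om_{S^{\mr{log}}_{\mr{fl}}}(X,-)$ to $0\to\Gm\to\Gml\to\Gml/\Gm\to 0$ and killing $\mc{E}xt(X,\Gm)$ by local freeness of $X$, and the second via the snake lemma on the push-out diagram over $B$ (the paper only states the latter step without spelling out the snake lemma). The one nitpick is that injectivity of $G\to G_{\mr{log}}$ comes from the vanishing of the \emph{kernels} on the $T$- and $B$-columns, not the cokernel on $B$, but this does not affect the argument.
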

\begin{proof}
By Proposition \ref{prop1.1}, $G_{\mr{log}}$ is the push-out of $T_{\mr{log}}\leftarrow T\rightarrow G$ in the category of sheaves on $S^{\mr{log}}_{\mr{fl}}$, so we get a commutative diagram
\begin{equation*}
\xymatrix{
0\ar[r] &T\ar[r]\ar[d] &G\ar[r]\ar[d] &B\ar[r]\ar@{=}[d] &0 \\
0\ar[r] &T_{\mr{log}}\ar[r]  &G_{\mr{log}}\ar[r]  &B \ar[r] &0
}
\end{equation*}
with exact rows. Then the isomorphism $T_{\mr{log}}/T\cong G_{\mr{log}}/G$ follows. Applying the functor $\mc{H}om_{S^{\mr{log}}_{\mr{fl}}}(X,-)$ to the short exact sequence
$$0\rightarrow \Gm\rightarrow \Gml\rightarrow \Gml/\Gm\rightarrow 0,$$
we get a long exact sequence
$$0\rightarrow T\rightarrow T_{\mr{log}}\rightarrow \mc{H}om_{S^{\mr{log}}_{\mr{fl}}}(X,\Gml/\Gm)\rightarrow \mc{E}xt_{S^{\mr{log}}_{\mr{fl}}}(X,\Gm)$$
of sheaves on $S^{\mr{log}}_{\mr{fl}}$. Since $X$ is classical \'etale locally represented by a finite rank free abelian group, the sheaf $\mc{E}xt_{S^{\mr{log}}_{\mr{fl}}}(X,\Gm)$ is zero. It follows that the sheaf $\mc{H}om_{S^{\mr{log}}_{\mr{fl}}}(X,\Gml/\Gm)$ is canonically isomorphic to $T_{\mr{log}}/T$.
\end{proof}

It is obvious that the association of $G_{\mr{log}}$ to $G$ is functorial in $G$. Hence we have a natural map $\mr{Hom}_{S^{\mr{log}}_{\mr{fl}}}(G,G')\rightarrow \mr{Hom}_{S^{\mr{log}}_{\mr{fl}}}(G_{\mr{log}},G'_{\mr{log}})$, where $G'$ is another commutative group scheme which is an extension of an abelian scheme by a torus over the underlying scheme of $S$. The following proposition describes some properties of this map.

\begin{prop}\label{prop1.3}
\begin{enumerate}[(1)]
\item The association of $G_{\mr{log}}$ to $G$ is functorial in $G$.
\item The canonical map $\mr{Hom}_{S^{\mr{log}}_{\mr{fl}}}(G,G')\rightarrow \mr{Hom}_{S^{\mr{log}}_{\mr{fl}}}(G_{\mr{log}},G'_{\mr{log}})$ is an isomorphism.
\item For a group scheme $H$ of multiplicative type with character group $X_H$ over the underlying scheme of $S$, let $H_{\mr{log}}$ denote $\mc{H}om_{S^{\mr{log}}_{\mr{fl}}}(X_H,\Gml)$. Let $0\rightarrow H'\rightarrow H\rightarrow H''\rightarrow 0$ be a short exact sequence of group schemes of multiplicative type over the underlying scheme of $S$ such that their character groups are \'etale locally finite rank constant sheaves, then the sequences
$$0\rightarrow H'_{\mr{log}}\rightarrow H_{\mr{log}}\rightarrow H''_{\mr{log}}\rightarrow 0$$
and 
\begin{align*}
0&\rightarrow \mc{H}om_{S^{\mr{log}}_{\mr{fl}}}(X_{H'},\Gml/\Gm)\rightarrow \mc{H}om_{S^{\mr{log}}_{\mr{fl}}}(X_H,\Gml/\Gm) \\
&\rightarrow \mc{H}om_{S^{\mr{log}}_{\mr{fl}}}(X_{H''},\Gml/\Gm)\rightarrow 0
\end{align*}
are both exact.
\item If $G\rightarrow G'$ is injective, so is $G_{\mr{log}}\rightarrow G'_{\mr{log}}$.
\item If $G\rightarrow G'$ is surjective, so is $G_{\mr{log}}\rightarrow G'_{\mr{log}}$.
\item Let $0\rightarrow G'\rightarrow G\rightarrow G''\rightarrow 0$ be a short exact sequence of semi-abelian schemes over the underlying scheme of $S$, such that $G'$ (resp. $G$, resp. $G''$) is an extension of an abelian scheme $B'$ (resp. $B$, resp. $B''$) by a torus $T'$ (resp. $T$, resp. $T''$). Then we have a short exact sequence 
$0\rightarrow G'_{\mr{log}}\rightarrow G_{\mr{log}}\rightarrow G''_{\mr{log}}\rightarrow 0$.
\end{enumerate}
\end{prop}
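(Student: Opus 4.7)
Items (1)--(6) reduce successively to the push-out and exact-sequence structure of $G_{\mr{log}}$ established in \ref{prop1.1} and \ref{prop1.2}.

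Part (1) is formal: a homomorphism $f:G\to G'$ carries the maximal subtorus $T$ into $T'$, giving contravariantly a map $X'\to X$ and hence $T_{\mr{log}}\to T'_{\mr{log}}$ via $\mc{H}om_{S^{\mr{log}}_{\mr{fl}}}(-,\Gml)$; the universal property of the push-out then produces $G_{\mr{log}}\to G'_{\mr{log}}$. For (2), injectivity of the canonical map is immediate from $G\hookrightarrow G_{\mr{log}}$. For surjectivity, given $\varphi:G_{\mr{log}}\to G'_{\mr{log}}$, the composition $G\hookrightarrow G_{\mr{log}}\xrightarrow{\varphi} G'_{\mr{log}}\twoheadrightarrow G'_{\mr{log}}/G'\cong\mc{H}om_{S^{\mr{log}}_{\mr{fl}}}(X',\Gml/\Gm)$ vanishes, because $G$ is an extension of an abelian scheme by a torus while $\Gml/\Gm$ is \'etale locally a constant $\Z$-valued sheaf; thus $\varphi|_G$ factors through $G'$, and push-out functoriality verifies that the resulting $G\to G'$ reproduces $\varphi$.

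For (3), the character functor turns the given sequence into a short exact sequence $0\to X_{H''}\to X_H\to X_{H'}\to 0$ of \'etale locally finitely generated constant sheaves. Applying $\mc{H}om_{S^{\mr{log}}_{\mr{fl}}}(-,\Gml)$ and $\mc{H}om_{S^{\mr{log}}_{\mr{fl}}}(-,\Gml/\Gm)$ yields long exact sequences, and the required exactness reduces to the vanishing of $\mc{E}xt^1_{S^{\mr{log}}_{\mr{fl}}}(X_{H'},\Gml)$ and its $\Gml/\Gm$-analogue. After an \'etale cover trivialising $X_{H'}$, the free summand contributes nothing, while the torsion summand $\Z/n$ requires $n$-divisibility of $\Gml$, which holds in the log flat topology by Kato's logarithmic Kummer theory; $n$-divisibility of $\Gml/\Gm$ then follows from the sequence $0\to\Gm\to\Gml\to\Gml/\Gm\to 0$ together with $n$-divisibility of $\Gm$ in the classical flat topology.

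For (4), unwinding the push-out, an element of $G_{\mr{log}}$ is represented by a pair $(t_{\mr{log}},g)\in T_{\mr{log}}\oplus G$ modulo the anti-diagonal of $T$; if its image in $G'_{\mr{log}}$ vanishes then $g$ must lie in the preimage of $T'$ under $G\to G'$, which coincides with $T$ since $T$ is the maximal subgroup of multiplicative type of $G$, and the relation then degenerates to an equation in $T_{\mr{log}}$ that is zero because $T\hookrightarrow T'$ yields $X'\twoheadrightarrow X$ and hence $T_{\mr{log}}\hookrightarrow T'_{\mr{log}}$. For (5), surjectivity on tori gives surjectivity on $T_{\mr{log}}$ by (3), and the snake lemma applied to the sequences $0\to T_{\mr{log}}\to G_{\mr{log}}\to B\to 0$ and its primed analogue yields surjectivity of $G_{\mr{log}}\to G'_{\mr{log}}$. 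Finally (6) combines (4) and (5) with the fact that a short exact sequence of semi-abelian schemes induces short exact sequences on the torus and abelian parts. The main obstacle is (3), whose treatment of the torsion part of $X_{H'}$ genuinely requires the log flat topology; once (3) is in place, the remaining items follow by the direct push-out computation and snake-lemma arguments indicated.
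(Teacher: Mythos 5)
Your treatments of (1), (2), (3) and (5) are essentially sound and, for (3) and (5), close in spirit to the paper's own argument (reduce to vanishing of $\mc{E}xt$ against $\Gml$ resp.\ $\Gml/\Gm$ via divisibility in the log flat topology). The serious problem is part (6). You invoke ``the fact that a short exact sequence of semi-abelian schemes induces short exact sequences on the torus and abelian parts,'' and this is false. Given $0\rightarrow G'\rightarrow G\rightarrow G''\rightarrow 0$, one does get $T'\hookrightarrow T$ and $T\twoheadrightarrow T''$, but the kernel of $T\rightarrow T''$ is $T\cap G'$, which contains $T'$ only with finite index: for example, with $E$ an elliptic curve containing $\mu_n$ and $G=(\Gm\times E)/\mu_n$ (anti-diagonal), the sequence $0\rightarrow E\rightarrow G\rightarrow \Gm\rightarrow 0$ has torus parts $0\rightarrow 1\rightarrow\Gm\xrightarrow{\,n\,}\Gm\rightarrow 0$, which is not exact in the middle; correspondingly $B'\rightarrow B$ is $E\rightarrow E/\mu_n$, not injective. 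On character groups this is exactly the failure of $0\rightarrow X''\rightarrow X\rightarrow X'\rightarrow 0$ to be exact that the paper flags explicitly: it must be replaced by the two sequences $0\rightarrow Z\rightarrow X\rightarrow X'\rightarrow 0$ and $0\rightarrow X''\rightarrow Z\rightarrow Z/X''\rightarrow 0$ with $Z/X''$ finite torsion, and the exactness of the third column of the $3\times 3$ diagram is rescued only by the additional input $\mc{H}om_{S^{\mr{log}}_{\mr{fl}}}(Z/X'',\Gml/\Gm)=0$ (torsion-freeness of $\Gml/\Gm$). Your proposal contains no substitute for this step, so middle exactness of $G'_{\mr{log}}\rightarrow G_{\mr{log}}\rightarrow G''_{\mr{log}}$ is not established.

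A smaller but genuine error occurs in your element-chase for (4): you claim the preimage of $T'$ under an injective $G\rightarrow G'$ equals $T$ ``since $T$ is the maximal subgroup of multiplicative type of $G$.'' $T$ is only the maximal sub\emph{torus}; $f^{-1}(T')$ is a multiplicative-type subgroup containing $T$ with finite quotient, and it can be strictly larger (take $G=E\hookrightarrow G'=(\Gm\times E)/\mu_n$, where $f^{-1}(\Gm)=\mu_n\neq 0$). What saves the argument is not that $f(g)\in T'$ forces $g\in T$, but that $f(g)$ lies in $T'\cap\mathrm{im}(T_{\mr{log}}\rightarrow T'_{\mr{log}})=f(T)$, which follows from the injectivity of $\mc{H}om_{S^{\mr{log}}_{\mr{fl}}}(X,\Gml/\Gm)\rightarrow\mc{H}om_{S^{\mr{log}}_{\mr{fl}}}(X',\Gml/\Gm)$; this is precisely the cleaner route the paper takes, deducing injectivity of $G_{\mr{log}}\rightarrow G'_{\mr{log}}$ from injectivity of $G\rightarrow G'$ together with injectivity of $G_{\mr{log}}/G\rightarrow G'_{\mr{log}}/G'$. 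I recommend you replace both (4) and (6) with arguments at the level of the quotients $G_{\mr{log}}/G\cong\mc{H}om_{S^{\mr{log}}_{\mr{fl}}}(X,\Gml/\Gm)$ rather than relying on the torus/abelian decomposition being exact.
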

\begin{proof}
Part (1) is clear. The isomorphism of part (2) follows from \cite[Prop. 2.5]{k-k-n2}.

We prove part (3). Since we have a long exact sequence $$0\rightarrow H'_{\mr{log}}\rightarrow H_{\mr{log}}\rightarrow H''_{\mr{log}}\rightarrow \mc{E}xt_{S^{\mr{log}}_{\mr{fl}}}(X_{H'},\Gml),$$ it suffices to show $\mc{E}xt_{S^{\mr{log}}_{\mr{fl}}}(X_{H'},\Gml)=0$. Since $\mc{E}xt_{S^{\mr{log}}_{\mr{fl}}}(\Z,\Gml)=0$, we are further reduced to show $\mc{E}xt_{S^{\mr{log}}_{\mr{fl}}}(\Z/n\Z,\Gml)=0$ for any positive integer $n$. The short exact sequence $0\rightarrow \Z\xrightarrow{n} \Z\rightarrow \Z/n\Z\rightarrow 0$ gives rise to a long exact sequence $0\rightarrow \mc{H}om_{S^{\mr{log}}_{\mr{fl}}}(\Z/n\Z,\Gml)\rightarrow \Gml\xrightarrow{n} \Gml\rightarrow \mc{E}xt_{S^{\mr{log}}_{\mr{fl}}}(\Z/n\Z,\Gml)\rightarrow 0$.
Since $\Gml\xrightarrow{n} \Gml$ is surjective, the sheaf $\mc{E}xt_{S^{\mr{log}}_{\mr{fl}}}(\Z/n\Z,\Gml)$ must be zero. The other short exact sequence is proved similarly.

We prove part (4). Since $G\rightarrow G'$ is injective, then the corresponding map $T\rightarrow T'$ on the torus parts is also injective and the corresponding map $X'\rightarrow X$ on the character groups is surjective. It follows that the induced map $$G_{\mr{log}}/G=\mc{H}om_{S^{\mr{log}}_{\mr{fl}}}(X,\Gml/\Gm)\rightarrow \mc{H}om_{S^{\mr{log}}_{\mr{fl}}}(X',\Gml/\Gm)=G'_{\mr{log}}/G'$$ is injective. Hence $G_{\mr{log}}\rightarrow G'_{\mr{log}}$ is injective.

Now we prove part (5). Let $f$ denote the map $G\rightarrow G'$. Consider the torus and abelian variety decomposition of $f$
$$\xymatrix{
0\ar[r] &T\ar[r]\ar[d]^{f_{\mr{t}}} &G\ar[r]\ar[d]^f &B\ar[r]\ar[d]^{f_{\mr{ab}}} &0  \\
0\ar[r] &T'\ar[r] &G'\ar[r] &B'\ar[r] &0 .
}$$
We first show that ${f_{\mr{t}}}$ is surjective. Assume that the underlying scheme of $S$ is a point. The snake lemma gives an exact sequence $\mr{Ker}({f_{\mr{ab}}})\rightarrow\mr{Coker}({f_{\mr{t}}})\rightarrow 0$. Since $\mr{Coker}({f_{\mr{t}}})$ is a torus and the reduced neutral component of $\mr{Ker}({f_{\mr{ab}}})$ is an abelian variety by \cite[Lem. 3.3.7]{bri1}, we must have $\mr{Coker}({f_{\mr{t}}})=0$. Hence $f_{\mr{t}}$ is surjective. In the general case, $f_{\mr{t}}$ is fiberwise surjective, hence it is also set-theoretically surjective. The fibers of $f_{\mr{t}}$ over $S$ are all flat, hence $f_{\mr{t}}$ is flat by the fiberwise criterion of flatness, see \cite[Cor. 11.3.11]{egaIV-3}. Then $f_{\mr{t}}$ is faithfully flat, hence it is surjective. Then we get a short exact sequence $0\rightarrow X'\rightarrow X\rightarrow X/X'\rightarrow 0$ of \'etale locally constant sheaves. Applying the functor $\mc{H}om_{S^{\mr{log}}_{\mr{fl}}}(-,\Gml/\Gm)$ to this short exact sequence, we get a long exact sequence 
$$\rightarrow G_{\mr{log}}/G\rightarrow G'_{\mr{log}}/G'\rightarrow \mc{E}xt_{S^{\mr{log}}_{\mr{fl}}}(X/X',\Gml/\Gm).$$
Let $Z_{\mr{tor}}$ be the torsion part of $X/X'$, and let $n$ be a positive integer such that $nZ_{\mr{tor}}=0$. Since the multiplication-by-$n$ map on $\Gml/\Gm$ is an isomorphism, we get that the sheaf $\mc{E}xt_{S^{\mr{log}}_{\mr{fl}}}(Z_{\mr{tor}},\Gml/\Gm)$ is zero. The torsion-free nature of $(X/X')/Z_{\mr{tor}}$ implies $\mc{E}xt_{S^{\mr{log}}_{\mr{fl}}}((X/X')/Z_{\mr{tor}},\Gml/\Gm)=0$, hence $\mc{E}xt_{S^{\mr{log}}_{\mr{fl}}}(X/X',\Gml/\Gm)=0$. It follows that $G_{\mr{log}}/G\rightarrow G'_{\mr{log}}/G'$ is surjective, hence $G_{\mr{log}}\rightarrow G'_{\mr{log}}$ is surjective.

At last, we prove part (6). Consider the following commutative diagram
$$\xymatrix{
&0\ar[d] &0\ar[d] &0\ar[d]  \\
0\ar[r] &G'\ar[r]\ar[d] &G'_{\mr{log}}\ar[r]\ar[d] &\mc{H}om_{S^{\mr{log}}_{\mr{fl}}}(X',\Gmlb)\ar[r]\ar[d] &0  \\
0\ar[r] &G\ar[r]\ar[d]  &G_{\mr{log}}\ar[r]\ar[d]  &\mc{H}om_{S^{\mr{log}}_{\mr{fl}}}(X,\Gmlb) \ar[r]\ar[d] &0   \\
0\ar[r] &G''\ar[r]\ar[d]  &G''_{\mr{log}}\ar[r]\ar[d]  &\mc{H}om_{S^{\mr{log}}_{\mr{fl}}}(X'',\Gmlb) \ar[r]\ar[d] &0  \\
&0 &0 &0
}$$
with the first column and all rows exact, where $\Gmlb$ denotes $\Gml/\Gm$. The maps $G'\rightarrow G\rightarrow G''$ induce $T'\rightarrow T\rightarrow T''$, furthermore $X'\leftarrow X\leftarrow X''$, lastly the third column of the diagram. Although $0\rightarrow X''\rightarrow X\rightarrow X'\rightarrow 0$ is not necessarily exact, it gives two exact sequences $0\rightarrow Z\rightarrow X\rightarrow X'\rightarrow 0$ and $0\rightarrow X''\rightarrow Z\rightarrow Z/X''\rightarrow 0$, where $Z:=\mr{Ker}(X\rightarrow X')$ is \'etale locally a finite rank free constant sheaf and $Z/X''$ is \'etale locally a finite torsion constant sheaf. By part (3), we get two short exact sequences
$$0\rightarrow \mc{H}om_{S^{\mr{log}}_{\mr{fl}}}(X',\Gmlb)\rightarrow \mc{H}om_{S^{\mr{log}}_{\mr{fl}}}(X,\Gmlb)\rightarrow \mc{H}om_{S^{\mr{log}}_{\mr{fl}}}(Z,\Gmlb)\rightarrow 0$$
and
\begin{align*}
0\rightarrow &\mc{H}om_{S^{\mr{log}}_{\mr{fl}}}(Z/X'',\Gmlb)\rightarrow \mc{H}om_{S^{\mr{log}}_{\mr{fl}}}(Z,\Gmlb)  \\
\rightarrow &\mc{H}om_{S^{\mr{log}}_{\mr{fl}}}(X'',\Gmlb)\rightarrow 0.
\end{align*}
But $\mc{H}om_{S^{\mr{log}}_{\mr{fl}}}(Z/X'',\Gmlb)=0$, it follows that the third column of the diagram is exact. So is the middle column.
\end{proof}

Recall that in \cite[Def. 2.2]{k-k-n2}, a log 1-motive $M$ over $S^{\mr{cl}}_{\mr{\acute{E}t}}$ is defined as a two-term complex $[Y\xrightarrow{u}G_{\mr{log}}]$ in the category of sheaves on $S^{\mr{cl}}_{\mr{\acute{E}t}}$, with the degree $-1$ term $Y$ an \'etale locally constant sheaf of finitely generated free abelian groups and the degree 0 term $G_{\mr{log}}$ as above. Since both $Y$ and $G_{\mr{log}}$ are sheaves on $S^{\mr{log}}_{\mr{fl}}$, $M$ can also be defined as a two-term complex $[Y\xrightarrow{u}G_{\mr{log}}]$ in the category of sheaves on $S^{\mr{log}}_{\mr{fl}}$. Parallel to \cite[2.3]{k-k-n2}, we have a natural pairing
\begin{equation}\label{eq1.1}
<,>:X\times Y\rightarrow X\times (G_{\mr{log}}/G)=X\times \mc{H}om_{S^{\mr{log}}_{\mr{fl}}}(X,\Gml/\Gm)\rightarrow \Gml/\Gm.
\end{equation}
It is clear that our pairing is induced from the one of \cite[2.3]{k-k-n2}. 

By our convention, $T_{\mr{log}}/T$ denotes the quotient in the category of sheaves on $S^{\mr{log}}_{\mr{fl}}$. For the quotient of $T\subset T_{\mr{log}}$ in the category of sheaves on $S^{\mr{cl}}_{\mr{\acute{E}t}}$, we use the notation $(T_{\mr{log}}/T)_{S^{\mr{cl}}_{\mr{\acute{E}t}}}$. Now we assume that the pairing (\ref{eq1.1}) is admissible (see \cite[7.1]{k-k-n2} for the definition of admissibility), in other words the log 1-motive $M$ is admissible. Recall that in \cite[3.1]{k-k-n2}, the subgroup sheaf $\mc{H}om_{S^{\mr{cl}}_{\mr{\acute{E}t}}}(X,(\Gml/\Gm)_{S^{\mr{cl}}_{\mr{\acute{E}t}}})^{(Y)}$ of the sheaf $\mc{H}om_{S^{\mr{cl}}_{\mr{\acute{E}t}}}(X,(\Gml/\Gm)_{S^{\mr{cl}}_{\mr{\acute{E}t}}})$ on $S^{\mr{cl}}_{\mr{\acute{E}t}}$ is defined by 
$$\begin{aligned}
& \mc{H}om_{S^{\mr{cl}}_{\mr{\acute{E}t}}}(X,(\Gml/\Gm)_{S^{\mr{cl}}_{\mr{\acute{E}t}}})^{(Y)}(U):= \\
& \{\varphi \in\mc{H}om_{S^{\mr{cl}}_{\mr{\acute{E}t}}}(X,(\Gml/\Gm)_{S^{\mr{cl}}_{\mr{\acute{E}t}}})(U)\,|\,\text{for every $u\in U$ and $x\in X_{\bar{u}}$},  \\
& \text{there exist $y_{u,x},y'_{u,x}\in Y_{\bar{u}}$ such that $<x,y_{u,x}>|\varphi_{\bar{u}}(x)|<x,y'_{u,x}>$}\}.
\end{aligned}$$
Here, $\bar{u}$ denotes a classical \'etale geometric point above $u$, and for $a,b\in (M_U^{\mr{gp}}/\mc{O}_U^{\times})_{\bar{u}}$, $a|b$ means $a^{-1}b\in (M_U/\mc{O}_U^{\times})_{\bar{u}}$.

It is natural to define the analogue of $\mc{H}om_{S^{\mr{cl}}_{\mr{\acute{E}t}}}(X,(\Gml/\Gm)_{S^{\mr{cl}}_{\mr{\acute{E}t}}})^{(Y)}$ in the log flat topology. We need the following lemma first.

\begin{lem}\label{lem1.2}
Let $\delta:S^{\mr{log}}_{\mr{fl}}\rightarrow S^{\mr{cl}}_{\mr{\acute{E}t}}$ be the canonical map between these two sites.
\begin{enumerate}[(1)]
\item $\delta_*(\Gml/\Gm)=(\Gml/\Gm)_{S^{\mr{cl}}_{\mr{\acute{E}t}}}\otimes_{\Z}\Q$.
\item Let $H$ be a commutative group scheme over the underlying scheme of $S$ with connected fibres. Then $\mr{Hom}_{S^{\mr{log}}_{\mr{fl}}}(H,\Gml/\Gm)=0$.
\end{enumerate}
\end{lem}
\begin{proof}
We denote the sheaf $\Gml/\Gm$ on $S^{\mr{log}}_{\mr{fl}}$ by $\Gmlb$. For any positive integer $n$, we have the following commutative diagram
$$
\xymatrix{
&&0\ar[d] &0\ar[d]  \\
0\ar[r] &\Z/n(1)\ar[r]\ar@{=}[d] &\Gm\ar[r]^{n}\ar[d] &\Gm\ar[r]\ar[d] &0  \\
0\ar[r] &\Z/n(1)\ar[r] &\Gml\ar[r]^{n}\ar[d] &\Gml\ar[r]\ar[d] &0  \\
&&\Gmlb \ar[r]^{n}_{\cong}\ar[d] &\Gmlb\ar[d]  \\
&&0 &0
}
$$
with exact rows and columns, where $\Z/n(1)$ denotes the group scheme of $n$-th roots of unity. Applying the functor $\varepsilon_{\mr{fl}*}$ to the above diagram, we get a new commutative diagram
$$
\xymatrix{
&0\ar[r] &\Z/n(1)\ar[r]\ar@{=}[d] &\Gm\ar[r]^n\ar@{^{(}->}[d] &\Gm\ar@{^{(}->}[d]   \\
&0\ar[r] &\Z/n(1)\ar[r] &\Gml\ar[r]^n\ar[d]^{\alpha} &\Gml\ar[d]^{\gamma}   \\
&&&\varepsilon_{\mr{fl}*}\Gmlb\ar[r]^n_{\cong}\ar[d]^{\beta} &\varepsilon_{\mr{fl}*}\Gmlb\ar[d]^{\delta}  \\
\Gm\ar[r]^n\ar@{^{(}->}[d] &\Gm\ar[r]\ar@{^{(}->}[d] &R^1\varepsilon_{\mr{fl}*}\Z/n(1)\ar[r]^{\eta}\ar@{=}[d] &R^1\varepsilon_{\mr{fl}*}\Gm\ar[r]^n\ar[d] &R^1\varepsilon_{\mr{fl}*}\Gm  \\
\Gml\ar[r]^n &\Gml\ar[r]^-{\theta} &R^1\varepsilon_{\mr{fl}*}\Z/n(1)\ar[r]&R^1\varepsilon_{\mr{fl}*}\Gml
}
$$
with exact rows and columns. Since the map $\Gm\xrightarrow{n}\Gm$ is surjective and $R^1\varepsilon_{\mr{fl}*}\Gml=0$, we get a new commutative diagram
$$\xymatrix{
&&&\mc{G}\otimes_{\Z}\Z/n\ar[d]^{\bar{\theta}}_{\cong}   \\
0\ar[r] &\mc{G}\ar[r]^n\ar@{=}[d] &\mc{G}\ar@{^{(}->}[d]^{\frac{1}{n}\gamma}\ar[r]^-{\xi}\ar[ru]^{\omega} &R^1\varepsilon_{\mr{fl}*}\Z/n(1)\ar[r]\ar@{^{(}->}[d]^{\eta} &0  \\
0\ar[r] &\mc{G} \ar[r]^-{\bar{\alpha}} &\varepsilon_{\mr{fl}*}\Gmlb\ar[r]^{\beta} &R^1\varepsilon_{\mr{fl}*}\Gm\ar[r] &0
}$$
with exact rows, where $\mc{G}$ denotes $(\Gml/\Gm)_{S^{\mr{cl}}_{\mr{fl}}}$, $\bar{\alpha}$ (resp. $\bar{\theta}$) is the canonical map induced by $\alpha$ (resp. $\theta$), $\omega$ is the canonical projection map and $\xi$ is the unique map guaranteed by $n\beta\circ (\frac{1}{n}\gamma)=\delta\circ (n(\frac{1}{n}\gamma))=\delta\circ\gamma=0$. Taking colimit of the above diagram with respect to $n$, we get a commutative diagram
$$\xymatrix{
0\ar[r] &\mc{G}\ar[r]\ar@{=}[d] &\mc{G}\otimes_{\Z}\Q\ar[d]\ar[r] &\mc{G}\otimes_{\Z}\Q/\Z\ar[r]\ar[d] &0  \\
0\ar[r] &\mc{G} \ar[r]^-{\bar{\alpha}} &\varepsilon_{\mr{fl}*}\Gmlb\ar[r]^{\beta} &R^1\varepsilon_{\mr{fl}*}\Gm\ar[r] &0
}$$
with exact rows. Since the map $\mc{G}\otimes_{\Z}\Q/\Z\rightarrow R^1\varepsilon_{\mr{fl}*}\Gm$ is an isomorphism by Kato's theorem \cite[Thm. 4.1]{kat2} (see also \cite[Thm. 3.12]{niz1}),
we get $\mc{G}\otimes_{\Z}\Q\cong\varepsilon_{\mr{fl}*}(\Gml/\Gm)$. Then 
\begin{align*}
\delta_*(\Gml/\Gm)=m_*\varepsilon_{\mr{fl}*}(\Gml/\Gm)&=m_*(\mc{G}\otimes_{\Z}\Q)    \\
&=(\Gml/\Gm)_{S^{\mr{cl}}_{\mr{\acute{E}t}}}\otimes_{\Z}\Q,
\end{align*}
where the last equality follows from the following fact: for any $U\in (\mr{fs}/S)$, the sheaf $M_U^{\mr{gp}}/\mc{O}_U^{\times}$ on the small \'etale site of $U$ is constructible. This proves part (1).

Now we prove part (2) which corresponds to \cite[Lem. 6.1.1]{k-k-n2}. We have 
\begin{align*}
\mr{Hom}_{S^{\mr{log}}_{\mr{fl}}}(H,\Gmlb)&=\mr{Hom}_{S^{\mr{cl}}_{\mr{\acute{E}t}}}(H,\delta_*\Gmlb)  \\
&=\mr{Hom}_{S^{\mr{cl}}_{\mr{\acute{E}t}}}(H,(\Gml/\Gm)_{S^{\mr{cl}}_{\mr{\acute{E}t}}}\otimes_{\Z}\Q).
\end{align*}
By the same argument of the proof of \cite[Lem. 6.1.1]{k-k-n2}, we have
$$\mr{Hom}_{S^{\mr{cl}}_{\mr{\acute{E}t}}}(H,(\Gml/\Gm)_{S^{\mr{cl}}_{\mr{\acute{E}t}}}\otimes_{\Z}\Q)=0.$$
Hence part (2) is proved.
\end{proof}

Now we define the analogue of $\mc{H}om_{S^{\mr{log}}_{\mr{\acute{E}t}}}(X,(\Gml/\Gm)_{S^{\mr{cl}}_{\mr{\acute{E}t}}})^{(Y)}$. It is the subgroup sheaf $\mc{H}om_{S^{\mr{log}}_{\mr{fl}}}(X,\Gml/\Gm)^{(Y)}$ of the sheaf $\mc{H}om_{S^{\mr{log}}_{\mr{fl}}}(X,\Gml/\Gm)$ on $S^{\mr{log}}_{\mr{fl}}$ given by
$$\begin{aligned}
& \mc{H}om_{S^{\mr{log}}_{\mr{fl}}}(X,\Gml/\Gm)^{(Y)}(U):= \\
& \{\varphi \in\mc{H}om_{S^{\mr{log}}_{\mr{fl}}}(X,\Gml/\Gm)(U)\,|\,\text{after pushing forward to $U^{\mr{cl}}_{\mr{\acute{E}t}}$,}  \\ 
& \text{for every $u\in U$ and $x\in X_{\bar{u}}$, there exist $y_{u,x},y'_{u,x}\in Y_{\bar{u}}$ such that}  \\
& <x,y_{u,x}>|\varphi_{\bar{u}}(x)|<x,y'_{u,x}>\}.
\end{aligned}$$
Here $\bar{u}$ still denotes a classical \'etale geometric point above $u$. Let $F:=\delta_{*}(\Gml/\Gm)=(\Gml/\Gm)_{S^{\mr{cl}}_{\mr{\acute{E}t}}}\otimes_{\Z}\Q$ with $\delta$ the canonical map $U^{\mr{log}}_{\mr{fl}}\rightarrow U^{\mr{cl}}_{\mr{\acute{E}t}}$. For $a,b\in (M_U^{\mr{gp}}/\mc{O}_U^{\times})_{\bar{u}}\otimes_{\Z}\Q$, $a|b$ means $a^{-1}b=\alpha\otimes_{\Z}r$ for some $\alpha\in (M_U/\mc{O}_U^{\times})_{\bar{u}}$ and $r\in\Q$.

\begin{rmk}\label{rmk1.1}
In \cite[7.1]{k-k-n2}, admissibility and non-degeneracy are defined for pairings into $(\Gml/\Gm)_{S^{\mr{cl}}_{\mr{\acute{E}t}}}$ in the classical \'etale site on $(\mr{fs}/S)$. We can define admissibility and non-degeneracy for pairings into $\Gml/\Gm$ on the log flat site in the same way. Since both $X$ and $Y$ are classical \'etale locally constant sheaves of finite rank free abelian groups, the definitions of admissibility and non-degeneracy are independent of the choice of the topology.
\end{rmk}

The next lemma compares the sheaf $\mc{H}om_{S^{\mr{cl}}_{\mr{\acute{E}t}}}(X,(\Gml/\Gm)_{S^{\mr{cl}}_{\mr{\acute{E}t}}})^{(Y)}$ on $S^{\mr{cl}}_{\mr{\acute{E}t}}$ with the sheaf $\mc{H}om_{S^{\mr{log}}_{\mr{fl}}}(X,\Gml/\Gm)^{(Y)}$ on $S^{\mr{log}}_{\mr{fl}}$.

\begin{lem}\label{lem1.2half}
Let $X,Y$ be two free abelian groups of finite rank, $<,>:X\times Y\rightarrow (\Gml/\Gm)_{S^{\mr{cl}}_{\mr{\acute{E}t}}}$ an admissible pairing on $S^{\mr{cl}}_{\mr{\acute{E}t}}$. Let
$$\mc{Q}_{\mr{cl}}:=\mc{H}om_{S^{\mr{cl}}_{\mr{\acute{E}t}}}(X,(\Gml/\Gm)_{S^{\mr{cl}}_{\mr{\acute{E}t}}})^{(Y)},\,\mc{Q}:=\mc{H}om_{S^{\mr{log}}_{\mr{fl}}}(X,\Gml/\Gm)^{(Y)},$$
and $\delta:S^{\mr{log}}_{\mr{fl}}\rightarrow S^{\mr{cl}}_{\mr{\acute{E}t}}$ the canonical map between these two sites. Then we have $\mc{Q}=\delta^*\mc{Q}_{\mr{cl}}$ and $\delta_*\mc{Q}=\mc{Q}_{\mr{cl}}\otimes_{\Z}\Q$.
\end{lem}
\begin{proof}
Denote $\Gml/\Gm$ by $\Gmlb$. We have  
$$\delta^*(\Gml/\Gm)_{S^{\mr{cl}}_{\mr{\acute{E}t}}}=\Gmlb,$$
and $$\delta_*(\Gmlb)=(\Gml/\Gm)_{S^{\mr{cl}}_{\mr{\acute{E}t}}}\otimes_{\Z}\Q$$
by part (1) of Lemma \ref{lem1.2}, hence $$\delta^*\mc{H}om_{S^{\mr{cl}}_{\mr{\acute{E}t}}}(X,(\Gml/\Gm)_{S^{\mr{cl}}_{\mr{\acute{E}t}}})=\mc{H}om_{S^{\mr{log}}_{\mr{fl}}}(X,\Gmlb),$$
and $$\delta_*\mc{H}om_{S^{\mr{log}}_{\mr{fl}}}(X,\Gmlb)=\mc{H}om_{S^{\mr{cl}}_{\mr{\acute{E}t}}}(X,(\Gml/\Gm)_{S^{\mr{cl}}_{\mr{\acute{E}t}}})\otimes_{\Z}\Q.$$ 
Then by the definition of $\mc{Q}$ and $\mc{Q}_{\mr{cl}}$, we get $\mc{Q}=\delta^*\mc{Q}_{\mr{cl}}$ and $\delta_*\mc{Q}=\mc{Q}_{\mr{cl}}\otimes_{\Z}\Q$.
\end{proof}

Recall that in \cite[3.2, Thm. 7.3]{k-k-n2}, $G_{\mr{log}}^{(Y)}\subset G_{\mr{log}}$ (resp. $T_{\mr{log}}^{(Y)}\subset T_{\mr{log}}$) on $S^{\mr{cl}}_{\mr{\acute{E}t}}$ is defined to be the inverse image of $\mc{H}om_{S^{\mr{cl}}_{\mr{\acute{E}t}}}(X,(\Gml/\Gm)_{S^{\mr{cl}}_{\mr{\acute{E}t}}})^{(Y)}$ under the map 
$$G_{\mr{log}}\rightarrow (G_{\mr{log}}/G)_{S^{\mr{cl}}_{\mr{\acute{E}t}}}\cong\mc{H}om_{S^{\mr{cl}}_{\mr{\acute{E}t}}}(X,(\Gml/\Gm)_{S^{\mr{cl}}_{\mr{\acute{E}t}}})$$
$$\textrm{(resp. } T_{\mr{log}}\rightarrow (T_{\mr{log}}/T)_{S^{\mr{cl}}_{\mr{\acute{E}t}}}\cong\mc{H}om_{S^{\mr{cl}}_{\mr{\acute{E}t}}}(X,(\Gml/\Gm)_{S^{\mr{cl}}_{\mr{\acute{E}t}}})\textrm{).}$$
We could also consider the inverse image sheaf of $\mc{H}om_{S^{\mr{log}}_{\mr{fl}}}(X,\Gml/\Gm)^{(Y)}$ under the map 
$$G_{\mr{log}}\rightarrow G_{\mr{log}}/G\cong\mc{H}om_{S^{\mr{log}}_{\mr{fl}}}(X,\Gml/\Gm)$$
$$\textrm{(resp. }T_{\mr{log}}\rightarrow T_{\mr{log}}/T\cong\mc{H}om_{S^{\mr{log}}_{\mr{fl}}}(X,\Gml/\Gm)\textrm{).}$$
The following proposition states that these two constructions coincide.

\begin{prop}\label{prop1.4}
\begin{enumerate}[(1)]
\item The sheaf $G_{\mr{log}}^{(Y)}$ on $S^{\mr{cl}}_{\mr{\acute{E}t}}$ is also a sheaf on $S^{\mr{log}}_{\mr{fl}}$.
\item The sheaf $G_{\mr{log}}^{(Y)}$ fits into a canonical short exact sequence 
\begin{equation}\label{eq1.2}
0\rightarrow G\rightarrow G_{\mr{log}}^{(Y)}\rightarrow \mc{H}om_{S^{\mr{log}}_{\mr{fl}}}(X,\Gml/\Gm)^{(Y)}\rightarrow 0
\end{equation}
of sheaves on $S^{\mr{log}}_{\mr{fl}}$. 
\item The association of $G_{\mr{log}}^{(Y)}$ to a log 1-motive $M=[Y\rightarrow G_{\mr{log}}]$ is functorial.
\end{enumerate}
\end{prop}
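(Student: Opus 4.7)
The plan is to define a natural log-flat candidate $\widetilde{G}_{\mr{log}}^{(Y)}$ by pullback, verify it coincides with the classical étale $G_{\mr{log}}^{(Y)}$, and then read off all three parts. Using the identification $G_{\mr{log}}/G\cong \mc{H}om_{S^{\mr{log}}_{\mr{fl}}}(X,\Gml/\Gm)$ from Proposition \ref{prop1.2}, set
$$\widetilde{G}_{\mr{log}}^{(Y)} := G_{\mr{log}}\times_{G_{\mr{log}}/G}\mc{H}om_{S^{\mr{log}}_{\mr{fl}}}(X,\Gml/\Gm)^{(Y)}$$
as a fiber product in sheaves on $S^{\mr{log}}_{\mr{fl}}$. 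This is automatically a sheaf on $S^{\mr{log}}_{\mr{fl}}$, and pulling the short exact sequence $0\to G\to G_{\mr{log}}\to G_{\mr{log}}/G\to 0$ back along the inclusion $\mc{H}om_{S^{\mr{log}}_{\mr{fl}}}(X,\Gml/\Gm)^{(Y)}\hookrightarrow G_{\mr{log}}/G$ yields the sequence (\ref{eq1.2}) of part (2). For part (1), I would identify $\widetilde{G}_{\mr{log}}^{(Y)}$ with $G_{\mr{log}}^{(Y)}$ after restriction to $S^{\mr{cl}}_{\mr{\acute{E}t}}$: both are subsheaves of $G_{\mr{log}}$ singled out by the stalk-level admissibility condition in the sharp monoid $M_U/\mc{O}_U^{\times}$, which by Remark \ref{rmk1.1} is intrinsic to the étale geometric points and hence topology-independent; the natural comparison $(G_{\mr{log}}/G)_{S^{\mr{cl}}_{\mr{\acute{E}t}}}\to G_{\mr{log}}/G$ then matches the two admissibility subsheaves, so their pullbacks to $G_{\mr{log}}$ agree.

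For part (3), a morphism of log 1-motives $(f,g)\colon [Y\to G_{\mr{log}}]\to [Y'\to G'_{\mr{log}}]$ consists of compatible $f\colon Y\to Y'$ and $g\colon G_{\mr{log}}\to G'_{\mr{log}}$. By Proposition \ref{prop1.3}(2) the map $g$ descends from a morphism $G\to G'$, which yields a character map $\bar f\colon X'\to X$ on the torus parts; the induced map on the quotients then reads $\varphi\mapsto \varphi\circ \bar f$. The commutativity $g\circ u=u'\circ f$ translates to $\langle x',f(y)\rangle_{M'}=\langle \bar f(x'),y\rangle_{M}$, so if $y,y'\in Y_{\bar u}$ witness admissibility of $\varphi$ at $\bar f(x')$, then $f(y),f(y')\in Y'_{\bar u}$ witness admissibility of $\varphi\circ \bar f$ at $x'$; hence $g$ carries $\widetilde{G}_{\mr{log}}^{(Y)}$ into $\widetilde{G}_{\mr{log}}^{\prime (Y')}$, giving functoriality.

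The step requiring the most care is the identification in part (1). It is natural to expect that the two pullback constructions agree, but one must articulate precisely how Remark \ref{rmk1.1} permits comparison of admissibility across sites, rather than treat this as self-evident. The remaining verifications are bookkeeping with the pairing $\langle\cdot,\cdot\rangle$ and the divisibility relation $\mid$ in the sharp monoid, and rely only on Propositions \ref{prop1.1}, \ref{prop1.2}, and \ref{prop1.3}.
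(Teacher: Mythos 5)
Your argument is correct in substance but organized differently from the paper's, most visibly in part (1). The paper does not construct a log flat candidate by fiber product: it quotes \cite[2.7, 5.3]{k-k-n4} to the effect that $G_{\mr{log}}^{(Y)}$ is a union of representable subsheaves of $G_{\mr{log}}$, and representable functors are log flat sheaves, so part (1) is immediate and independent of any comparison of admissibility conditions; only then does it apply the exact functor $\delta^*$ to the classical pullback square and invoke the canonical isomorphism $\delta^*\mc{H}om_{S^{\mr{cl}}_{\mr{\acute{E}t}}}(X,(\Gml/\Gm)_{S^{\mr{cl}}_{\mr{\acute{E}t}}})^{(Y)}\cong \mc{H}om_{S^{\mr{log}}_{\mr{fl}}}(X,\Gml/\Gm)^{(Y)}$ to get part (2). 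Your route trades the representability input for making \emph{both} parts (1) and (2) rest on the identification of the two $(Y)$-subsheaves across topologies, which you correctly flag as the delicate step. Two cautions there: Remark \ref{rmk1.1} as stated concerns admissibility and non-degeneracy of the pairing $X\times Y\to\Gml/\Gm$, not the subsheaf $\mc{H}om(-,-)^{(Y)}$, so it does not by itself deliver the matching you need; and the comparison map $(G_{\mr{log}}/G)_{S^{\mr{cl}}_{\mr{\acute{E}t}}}\to G_{\mr{log}}/G$ is injective but not an isomorphism (by Lemma \ref{lem1.3}(1) the target pushes forward to the source tensored with $\Q$), so transferring the divisibility condition back from the log flat quotient to the classical one uses that the stalks of $M_U/\mc{O}_U^{\times}$ are saturated. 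With those points made explicit your fiber-product construction does yield parts (1) and (2), arguably more self-containedly than the paper, at the cost of proving a compatibility the paper only asserts. Your part (3) is the same argument as the paper's: reduce to the induced map on the $\bar{\mathbb{G}}_{\mr{m,log}}$-quotients and transport the witnesses $y_{u,x}$ through $f_{-1}$ using $\langle f_{\mr{l}}(x'),y\rangle=\langle x',f_{-1}(y)\rangle'$.
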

\begin{proof}
Let $T$ (resp. $B$) be the torus (resp. abelian scheme) part of $G$, then we have a short exact sequence $0\rightarrow T_{\mr{log}}^{(Y)}\rightarrow G_{\mr{log}}^{(Y)}\rightarrow B\rightarrow 0$ of sheaves on $S^{\mr{cl}}_{\mr{\acute{E}t}}$. To show that $G_{\mr{log}}^{(Y)}$ on $S^{\mr{cl}}_{\mr{\acute{E}t}}$ is a sheaf on $S^{\mr{log}}_{\mr{fl}}$, it suffices to show that $T_{\mr{log}}^{(Y)}$ is so. By \cite[7.7]{k-k-n2}, locally on $S^{\mr{cl}}_{\mr{\acute{E}t}}$ the sheaf $T_{\mr{log}}^{(Y)}$ is a union of representable sheaves. Hence it is also a sheaf on $S^{\mr{log}}_{\mr{fl}}$. So part (1) is proven.


By the definition of $G_{\mr{log}}^{(Y)}$, we have a pullback diagram 
$$\xymatrix{
0\ar[r] &G\ar[r]\ar@{=}[d] &G_{\mr{log}}^{(Y)}\ar[r]\ar@{^(->}[d] &\mc{H}om_{S^{\mr{cl}}_{\mr{\acute{E}t}}}(X,(\Gml/\Gm)_{S^{\mr{cl}}_{\mr{\acute{E}t}}})^{(Y)}\ar[r]\ar@{^(->}[d] &0 \\
0\ar[r] &G\ar[r]  &G_{\mr{log}}\ar[r]  &\mc{H}om_{S^{\mr{cl}}_{\mr{\acute{E}t}}}(X,(\Gml/\Gm)_{S^{\mr{cl}}_{\mr{\acute{E}t}}}) \ar[r] &0
}$$
in the category of sheaves on $S^{\mr{cl}}_{\mr{\acute{E}t}}$. Since $G$, $G_{\mr{log}}^{(Y)}$ and $G_{\mr{log}}$ are all sheaves on $S^{\mr{log}}_{\mr{fl}}$, applying the functor $\delta^*$ to the above commutative diagram, we get the following commutative diagram
$$\xymatrix{
0\ar[r] &G\ar[r]\ar@{=}[d] &G_{\mr{log}}^{(Y)}\ar[r]\ar@{^(->}[d] &\delta^*\mc{H}om_{S^{\mr{cl}}_{\mr{\acute{E}t}}}(X,(\Gml/\Gm)_{S^{\mr{cl}}_{\mr{\acute{E}t}}})^{(Y)}\ar[r]\ar@{^(->}[d] &0 \\
0\ar[r] &G\ar[r]  &G_{\mr{log}}\ar[r]  &\delta^*\mc{H}om_{S^{\mr{cl}}_{\mr{\acute{E}t}}}(X,(\Gml/\Gm)_{S^{\mr{cl}}_{\mr{\acute{E}t}}}) \ar[r] &0.
}$$
Since we have canonical isomorphisms
$$\delta^*\mc{H}om_{S^{\mr{cl}}_{\mr{\acute{E}t}}}(X,(\Gml/\Gm)_{S^{\mr{cl}}_{\mr{\acute{E}t}}})\cong \mc{H}om_{S^{\mr{log}}_{\mr{fl}}}(X,\Gml/\Gm)$$
and
$$\delta^*\mc{H}om_{S^{\mr{cl}}_{\mr{\acute{E}t}}}(X,(\Gml/\Gm)_{S^{\mr{cl}}_{\mr{\acute{E}t}}})^{(Y)}\cong \mc{H}om_{S^{\mr{log}}_{\mr{fl}}}(X,\Gml/\Gm)^{(Y)},$$
part (2) follows. 

Now we prove part (3). It is enough to prove that for a given  homomorphism $(f_{-1},f_0):M=[Y\rightarrow G_{\mr{log}}]\rightarrow M'=[Y'\rightarrow G'_{\mr{log}}]$, the composition $G_{\mr{log}}^{(Y)}\hookrightarrow G_{\mr{log}}\xrightarrow{f_0}G'_{\mr{log}}$ factors through $G'^{(Y')}_{\mr{log}}\hookrightarrow G'_{\mr{log}}$. Let $X$ and $X'$ be the character groups of the torus parts of $G$ and $G'$ respectively, let $f_{\mr{l}}:X'\rightarrow X$ be the map induced from $f_0$, and let
$$\tilde{f}_{\mr{d}}:\mc{H}om_{S^{\mr{cl}}_{\mr{\acute{E}t}}}(X,(\Gml/\Gm)_{S^{\mr{cl}}_{\mr{\acute{E}t}}})\rightarrow\mc{H}om_{S^{\mr{cl}}_{\mr{\acute{E}t}}}(X',(\Gml/\Gm)_{S^{\mr{cl}}_{\mr{\acute{E}t}}})$$
be the map induced from $f_{\mr{l}}$. By the definition of $G_{\mr{log}}^{(Y)}$ and $G'^{(Y')}_{\mr{log}}$, we are reduced to show the composition 
\begin{align*}
&\mc{H}om_{S^{\mr{cl}}_{\mr{\acute{E}t}}}(X,(\Gml/\Gm)_{S^{\mr{cl}}_{\mr{\acute{E}t}}})^{(Y)}\hookrightarrow \mc{H}om_{S^{\mr{cl}}_{\mr{\acute{E}t}}}(X,(\Gml/\Gm)_{S^{\mr{cl}}_{\mr{\acute{E}t}}})\xrightarrow{\tilde{f}_{\mr{d}}}  \\
 &\mc{H}om_{S^{\mr{cl}}_{\mr{\acute{E}t}}}(X',(\Gml/\Gm)_{S^{\mr{cl}}_{\mr{\acute{E}t}}})
\end{align*}
factors through $$\mc{H}om_{S^{\mr{cl}}_{\mr{\acute{E}t}}}(X',(\Gml/\Gm)_{S^{\mr{cl}}_{\mr{\acute{E}t}}})^{(Y')}\hookrightarrow \mc{H}om_{S^{\mr{cl}}_{\mr{\acute{E}t}}}(X',(\Gml/\Gm)_{S^{\mr{cl}}_{\mr{\acute{E}t}}}).$$ 
Let $<,>:X\times Y\rightarrow (\Gml/\Gm)_{S^{\mr{cl}}_{\mr{\acute{E}t}}}$ (resp. $<,>':X'\times Y'\rightarrow (\Gml/\Gm)_{S^{\mr{cl}}_{\mr{\acute{E}t}}}$) be the pairing associated to $M$ (resp. $M'$), then we have 
$$<f_{\mr{l}}(x'),y>=<x',f_{-1}(y)>'$$
for any $x'\in X', y\in Y$. For any $U\in (\mr{fs}/S)$, 
$$\varphi\in \mc{H}om_{S^{\mr{cl}}_{\mr{\acute{E}t}}}(X,(\Gml/\Gm)_{S^{\mr{cl}}_{\mr{\acute{E}t}}})^{(Y)}(U),$$
we need to show
$$\psi:=\varphi\circ f_{\mr{l}}\in \mc{H}om_{S^{\mr{cl}}_{\mr{\acute{E}t}}}(X',(\Gml/\Gm)_{S^{\mr{cl}}_{\mr{\acute{E}t}}})^{(Y')}(U).$$
For every $u\in U$ and every $x'\in X'_{\bar{u}}$, there exist $y_{u,x',1},y_{u,x',2}\in Y_{\bar{u}}$ such that $<f_{\mr{l}}(x'),y_{u,x',1}>|\,\varphi_{\bar{u}}(f_{\mr{l}}(x'))\,|<f_{\mr{l}}(x'),y_{u,x',2}>$. The relation can be rewritten as $$<x',f_{-1}(y_{u,x',1})>'|\, \psi_{\bar{u}}(x')\, |<x',f_{-1}(y_{u,x',2})>',$$
which implies that $\psi\in \mc{H}om_{S^{\mr{cl}}_{\mr{\acute{E}t}}}(X',(\Gml/\Gm)_{S^{\mr{cl}}_{\mr{\acute{E}t}}})^{(Y')}(U)$. This finishes the proof of part (3).
\end{proof}

\begin{rmk}\label{rmk1.2} 
Clearly, the image of $u:Y\rightarrow G_{\mr{log}}$ is contained in $G_{\mr{log}}^{(Y)}$.
\end{rmk}

We further assume that the pairing (\ref{eq1.1}) is non-degenerate (see \cite[7.1]{k-k-n2} and Remark \ref{rmk1.1} for the definition of non-degenerate pairings), then the two maps $X\rightarrow \mc{H}om_{S^{\mr{log}}_{\mr{fl}}}(Y,\Gml/\Gm)$ and $Y\rightarrow \mc{H}om_{S^{\mr{log}}_{\mr{fl}}}(X,\Gml/\Gm)$ associated to the pairing are both injective. Recall that in \cite[Def. 3.3. (1)]{k-k-n2} (resp. \cite[1.7]{k-k-n4}) a log abelian variety with constant degeneration (resp. weak log abelian variety with constant degeneration) over $S$ is defined to be a sheaf of abelian groups on $S^{\mr{cl}}_{\mr{\acute{E}t}}$ which is isomorphic to the quotient sheaf $(G_{\mr{log}}^{(Y)}/Y)_{S^{\mr{cl}}_{\mr{\acute{E}t}}}$ for a pointwise polarisable (resp. non-degenerate) log 1-motive $M=[Y\xrightarrow{u}G_{\mr{log}}]$. Here a log 1-motive is said to be non-degenerate if its associated pairing (\ref{eq1.1}) is non-degenerate. Since the polarisability implies the non-degeneracy, a log abelian variety with constant degeneration over $S$ is in particular a weak log abelian variety with constant degeneration over $S$.

\begin{thm}\label{thm1.1}
Let $A$ be a weak log abelian variety with constant degeneration over $S$. Suppose $A=(G_{\mr{log}}^{(Y)}/Y)_{S^{\mr{cl}}_{\mr{\acute{E}t}}}$ for a non-degenerate log 1-motive $M=[Y\xrightarrow{u}G_{\mr{log}}]$. Then 
\begin{enumerate}[(1)]
\item $A$ is a sheaf on $S^{\mr{log}}_{\mr{fl}}$;
\item $A=G_{\mr{log}}^{(Y)}/Y$, in other words $A$ fits into a canonical short exact sequence
\begin{equation}\label{eq1.3}
0\rightarrow Y\rightarrow G_{\mr{log}}^{(Y)}\rightarrow A\rightarrow 0
\end{equation}
in the category of sheaves of abelian groups on $S^{\mr{log}}_{\mr{fl}}$;
\item $A$ fits into a canonical short exact sequence
\begin{equation}\label{eq1.4}
0\rightarrow G\rightarrow A\rightarrow \mc{H}om_{S^{\mr{log}}_{\mr{fl}}}(X,\Gml/\Gm)^{(Y)}/Y\rightarrow 0
\end{equation}
in the category of sheaves of abelian groups on $S^{\mr{log}}_{\mr{fl}}$.
\end{enumerate}
\end{thm}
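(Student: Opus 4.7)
The plan is to establish part (2) first; parts (1) and (3) then follow by formal manipulations.

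My first step is to verify that $u \colon Y \to G_{\mr{log}}^{(Y)}$ is injective as a map of sheaves on $S^{\mr{log}}_{\mr{fl}}$. The non-degeneracy of the pairing \eqref{eq1.1} amounts precisely to the injectivity of the adjoint map $Y \to \mc{H}om_{S^{\mr{log}}_{\mr{fl}}}(X,\Gml/\Gm) \cong G_{\mr{log}}/G$; since by Remark \ref{rmk1.2} the map $u$ factors through the inclusion $G_{\mr{log}}^{(Y)} \hookrightarrow G_{\mr{log}}$, the composite $Y \to G_{\mr{log}}^{(Y)} \to G_{\mr{log}}/G$ is injective, forcing $Y \hookrightarrow G_{\mr{log}}^{(Y)}$ to be injective as well. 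Consequently the quotient $A' := G_{\mr{log}}^{(Y)}/Y$ taken in the category of sheaves on $S^{\mr{log}}_{\mr{fl}}$ is well-defined and fits into a short exact sequence $0 \to Y \to G_{\mr{log}}^{(Y)} \to A' \to 0$ on the log flat site.

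The heart of (2) is to identify $A'$ with $A$. Applying $\delta_*$ to the log flat short exact sequence above, and using that $Y$ and $G_{\mr{log}}^{(Y)}$ are already classical \'etale sheaves (by Proposition \ref{prop1.4}(1) and the locally constant nature of $Y$), I obtain an exact sequence
\begin{equation*}
0 \to Y \to G_{\mr{log}}^{(Y)} \to \delta_* A' \to R^1\delta_* Y
\end{equation*}
of sheaves on $S^{\mr{cl}}_{\mr{\acute{E}t}}$. Here I would invoke Lemma \ref{lem1.1}, which computes $R^1\delta_* Y$ for \'etale locally finite rank free constant $Y$, to argue that the connecting map is zero (or at least that its image vanishes), whence $\delta_* A' \cong A$. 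Since $A'$ is the log flat sheafification of the presheaf quotient $G_{\mr{log}}^{(Y)}/Y$ while $A$ is the classical \'etale sheafification of the same presheaf, one has $A' \cong \delta^* A$; combined with $\delta_* A' \cong A$, the adjunction unit $A \to \delta_*\delta^* A$ is an isomorphism, so $A$ is itself a sheaf on $S^{\mr{log}}_{\mr{fl}}$ and coincides with $A'$. This simultaneously yields parts (1) and (2).

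For part (3), I would apply the snake lemma to the morphism of short exact sequences whose upper row is $0 \to 0 \to Y \xrightarrow{=} Y \to 0$ and whose lower row is \eqref{eq1.2} from Proposition \ref{prop1.4}(2), with vertical maps $0 \hookrightarrow G$, $u \colon Y \hookrightarrow G_{\mr{log}}^{(Y)}$, and the composite $Y \hookrightarrow G_{\mr{log}}^{(Y)} \twoheadrightarrow \mc{H}om_{S^{\mr{log}}_{\mr{fl}}}(X,\Gml/\Gm)^{(Y)}$ (injective, again by the non-degeneracy argument). All three vertical maps being injective, the cokernel sequence reads $0 \to G \to A \to \mc{H}om_{S^{\mr{log}}_{\mr{fl}}}(X,\Gml/\Gm)^{(Y)}/Y \to 0$, which is precisely \eqref{eq1.4}. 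The main obstacle throughout is the middle step, i.e., the identification $\delta_* A' \cong A$, which relies on the vanishing (or at least the control) of the connecting map into $R^1\delta_* Y$ provided by Lemma \ref{lem1.1}. Once this input is in hand, the remainder of the argument is essentially a diagram chase.
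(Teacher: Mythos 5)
Your proposal is correct and follows essentially the same route as the paper: both hinge on the sequence $0\rightarrow Y\rightarrow G_{\mr{log}}^{(Y)}\rightarrow A\rightarrow 0$, the adjunction $(\delta^*,\delta_*)$ together with the fact that $Y$ and $G_{\mr{log}}^{(Y)}$ are already log flat sheaves, and the vanishing $R^1\delta_*Y=0$ of Lemma \ref{lem1.1} to show $A\rightarrow\delta_*\delta^*A$ is an isomorphism. Your derivation of (3) from the sequence \eqref{eq1.2} via the injectivity of $Y\rightarrow\mc{H}om_{S^{\mr{log}}_{\mr{fl}}}(X,\Gml/\Gm)^{(Y)}$ (non-degeneracy) is also exactly the paper's argument.
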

\begin{proof}
Part (2) follows from part (1). Since the log 1-motive $M$ is non-degenerate, the map $Y\rightarrow \mc{H}om_{S^{\mr{log}}_{\mr{fl}}}(X,\Gml/\Gm)^{(Y)}$ is injective. Then the short exact sequence in part (3) is induced from the short exact sequences (\ref{eq1.2}) and (\ref{eq1.3}). We are left with part (1). The proof of part (1) is similar to that for the log \'etale case in \cite[\S5]{k-k-n4}. 

Consider the short exact sequence $0\rightarrow Y\rightarrow \tilde{A}\rightarrow A\rightarrow 0$ of \cite[5.3]{k-k-n4}. Note that $\tilde{A}$ is nothing but $G_{\mr{log}}^{(Y)}$ in our situation, however we stick to the notation $\tilde{A}$ for the sake of coherence with \cite[5.3]{k-k-n4}. The argument showing that $\tilde{A}$ is a log \'etale sheaf, also shows that $\tilde{A}$ is a log flat sheaf, since representable functors are sheaves for the log flat topology by \cite[Thm. 5.2]{k-k-n4}. We have the canonical map $\delta:=m\circ \varepsilon_{\mr{fl}}:S^{\mr{log}}_{\mr{fl}}\xrightarrow{\varepsilon_{\mr{fl}}} S^{\mr{cl}}_{\mr{fl}}\xrightarrow{m} S^{\mr{cl}}_{\mr{\acute{E}t}}$ of sites. Applying $\delta^*$ and $\delta_*$ to $0\rightarrow Y\rightarrow \tilde{A}\rightarrow A\rightarrow 0$, we get a commutative diagram
$$
\xymatrix{
0\ar[r] &Y\ar[r]\ar[d]^{=} &\tilde{A}\ar[r]\ar[d]^{=} &A\ar[r]\ar[d] &0 \\
0\ar[r] &Y\ar[r]  &\tilde{A}\ar[r]  &\delta_*\delta^*A \ar[r] &R^1\delta_*Y
}$$
with exact rows, where the vertical maps are the ones given by the adjunction $(\delta^*,\delta_*)$. To prove that $A$ is a sheaf for the log flat topology, it is enough to show that the canonical map $A\rightarrow \delta_*\delta^*A$ is an isomorphism. This follows from the above commutative diagram with the help of the lemma below. 
\end{proof}

\begin{lem}\label{lem1.3}
The sheaf $R^1\delta_* Y$ is zero.
\end{lem}
\begin{proof}
Since $Y$ is \'etale locally isomorphic to a finite rank free abelian group, we are reduced to the case $Y=\Z$. Note that $Y$ is a smooth group scheme over $S$. The proof here is the same as the proof of \cite[Thm. 4.1]{kat2} (see also the proof of \cite[Thm. 3.12]{niz1}) except the very last part where the condition $G$ being affine is used. The reason why the proof there can be generalised to our case lies in the fact that $Y$ is \'etale over $S$. 

Now we start from \cite[the second half of page 22]{kat2} or \cite[the second last paragraph of page 524]{niz1}, since these two parallel parts are the very parts needed to be modified. Let $B$ be a strict local ring, $\hat{B}$ its completion, and let $\alpha\in H^1((\mr{Spec}B)_{\mr{fl}}^{\mr{log}},\Z)$ such that it vanishes in $H^1((\mr{Spec}\hat{B})_{\mr{fl}}^{\mr{log}},\Z)$. By fpqc descent, $\alpha$ is a class of a representable $\Z$-torsor over $\mr{Spec}B$ such that its structure morphism is \'etale. Since $B$ is a strict local ring, the torsor admits a section by \cite[Prop. 18.8.1]{egaIV-4}, so $\alpha$ is zero. It follows that \cite[Thm. 4.1]{kat2} also holds for the case $G=\Z$, so $R^1\varepsilon_{\mr{fl}*}\Z=0$. The Leray spectral sequence gives a short exact sequence $0\rightarrow R^1m_*\Z\rightarrow R^1\delta_*\Z\rightarrow m_*R^1\varepsilon_{\mr{fl}*}\Z$. The sheaf $R^1m_*\Z=0$ by \cite[Thm. 11.7]{gro1}, it follows that $R^1\delta_*\Z=0$.
\end{proof}
\begin{rmk}\label{rmk1.3}
Lemma \ref{lem1.3} can be viewed as a generalisation of Kato's theorem (see \cite[Thm. 4.1]{kat2} or \cite[Thm. 3.12]{niz1}) to  \'etale locally constant finitely generated torsion-free group schemes.
\end{rmk}

Now we give a reformulation of \cite[Thm. 7.4]{k-k-n2} in the context of the log flat topology. 

\begin{thm}\label{thm1.2}
Let $[Y\rightarrow G_{\mr{log}}]$ be a log 1-motive over $S$ of type $(X,Y)$ (see \cite[Def. 2.2]{k-k-n2}) such that the induced paring $X\times Y\rightarrow \Gml/\Gm$ is non-degenerate, and let $[X\rightarrow G^*_{\mr{log}}]$ be its dual. Let $A=G^{(Y)}_{\mr{log}}/Y$. Then we have:
\begin{enumerate}[(1)]
\item $\mc{E}xt_{S^{\mr{log}}_{\mr{fl}}}(A,\Z)\cong \mc{H}om_{S^{\mr{log}}_{\mr{fl}}}(Y,\Z)$;
\item the sheaf $\delta_*\mc{E}xt_{S^{\mr{log}}_{\mr{fl}}}(A,\Gm)$ fits into an exact sequence $$0\rightarrow G^*\rightarrow \delta_*\mc{E}xt_{S^{\mr{log}}_{\mr{fl}}}(A,\Gm)\rightarrow \mc{H}om_{S^{\mr{cl}}_{\mr{\acute{E}t}}}(A,R^1\delta_* \Gm);$$
\item $\mc{E}xt_{S^{\mr{log}}_{\mr{fl}}}(A,\Gml)\cong (G^*_{\mr{log}}/X)_{S^{\mr{cl}}_{\mr{\acute{E}t}}} \cong G^*_{\mr{log}}/X$;
\item $\mc{H}om_{S^{\mr{log}}_{\mr{fl}}}(A,\Z)=\mc{H}om_{S^{\mr{log}}_{\mr{fl}}}(A,\Gm)=\mc{H}om_{S^{\mr{log}}_{\mr{fl}}}(A,\Gml)=0$.
\end{enumerate}
\end{thm}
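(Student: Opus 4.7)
The theorem reformulates \cite[Thm. 7.4]{k-k-n2}, which gives the classical \'etale analogues of all four statements, in the log flat topology. My strategy is to leverage the classical result by comparing the two topologies via the canonical map of sites $\delta:S^{\mr{log}}_{\mr{fl}}\to S^{\mr{cl}}_{\mr{\acute{E}t}}$, using the vanishing theorems for $R^1\delta_*$ provided by Lemma \ref{lem1.1} and Kato's logarithmic Hilbert 90.

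I would dispose of part (4) first. Since $A$ is a sheaf on both sites by Theorem \ref{thm1.1}, and $\Z$, $\Gm$, $\Gml$ are sheaves on both sites (the last by Proposition \ref{prop1.1}), Lemma \ref{lem1.2} gives
$$\mc{H}om_{S^{\mr{log}}_{\mr{fl}}}(A,F)=\mc{H}om_{S^{\mr{cl}}_{\mr{\acute{E}t}}}(A,F)$$
for each $F\in\{\Z,\Gm,\Gml\}$, and all three right-hand sides vanish by \cite[Thm. 7.4]{k-k-n2}.

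For parts (1)--(3), the main tool is the change-of-topology spectral sequence
$$E_2^{p,q}=\mc{E}xt^p_{S^{\mr{cl}}_{\mr{\acute{E}t}}}(A,R^q\delta_*F)\Rightarrow \mc{E}xt^{p+q}_{S^{\mr{log}}_{\mr{fl}}}(A,F),$$
which is available because $A$ lives on both sites (so $\delta^*A=A$). Whenever $R^1\delta_*F=0$, the low-degree terms of this spectral sequence yield the clean identification $\mc{E}xt^1_{S^{\mr{log}}_{\mr{fl}}}(A,F)\cong\mc{E}xt^1_{S^{\mr{cl}}_{\mr{\acute{E}t}}}(A,F)$, and the right-hand side is computed by the classical theorem. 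For $F=\Z$, Lemma \ref{lem1.1} supplies $R^1\delta_*\Z=0$, yielding part (1). For $F=\Gml$, Kato's logarithmic Hilbert 90 (already invoked in Proposition \ref{prop1.1} to kill $R^1\delta_*T_{\mr{log}}$) supplies $R^1\delta_*\Gml=0$, yielding part (3).

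Part (2) is where the log flat and classical \'etale topologies genuinely diverge, since $R^1\delta_*\Gm$ does not vanish. The same spectral sequence then yields the five-term exact sequence
$$0\to \mc{E}xt^1_{S^{\mr{cl}}_{\mr{\acute{E}t}}}(A,\Gm)\to \delta_*\mc{E}xt^1_{S^{\mr{log}}_{\mr{fl}}}(A,\Gm)\to \mc{H}om_{S^{\mr{cl}}_{\mr{\acute{E}t}}}(A,R^1\delta_*\Gm)\to\mc{E}xt^2_{S^{\mr{cl}}_{\mr{\acute{E}t}}}(A,\Gm),$$
and substituting $\mc{E}xt^1_{S^{\mr{cl}}_{\mr{\acute{E}t}}}(A,\Gm)\cong G^*$ from \cite[Thm. 7.4]{k-k-n2} produces the stated sequence. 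The main delicate point will be verifying that the identifications in parts (1) and (3) hold at the level of the Ext sheaves themselves, rather than only after applying $\delta_*$; this should follow because the target sheaves $\mc{H}om(Y,\Z)$ and $G^*_{\mr{log}}/X$ are themselves already sheaves on both sites, so $\delta_*\delta^*$ acts as the identity on them, and the comparison map is then an isomorphism on the nose.
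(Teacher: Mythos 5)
Your strategy is essentially the paper's: part (4) via Lemma \ref{lem1.2} and the classical statement, and parts (1)--(3) via a change-of-topology spectral sequence argument that feeds in $R^1\delta_*\Z=0$ (Lemma \ref{lem1.1}), $R^1\delta_*\Gml=0$ (logarithmic Hilbert 90), and the classical computations of $\mc{E}xt_{S^{\mr{cl}}_{\mr{\acute{E}t}}}(A,-)$. One technical correction is needed, though. The single spectral sequence you invoke, $E_2^{p,q}=\mc{E}xt^p_{S^{\mr{cl}}_{\mr{\acute{E}t}}}(A,R^q\delta_*F)\Rightarrow \mc{E}xt^{p+q}_{S^{\mr{log}}_{\mr{fl}}}(A,F)$, cannot exist at the level of Ext \emph{sheaves}: the $E_2$ terms are sheaves on $S^{\mr{cl}}_{\mr{\acute{E}t}}$ while the claimed abutment is a sheaf on $S^{\mr{log}}_{\mr{fl}}$. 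The paper instead factors the composite functor $\theta(F)=\mc{H}om_{S^{\mr{cl}}_{\mr{\acute{E}t}}}(A,\delta_*F)=\delta_*\mc{H}om_{S^{\mr{log}}_{\mr{fl}}}(A,F)$ in the two possible ways, obtaining \emph{two} Grothendieck spectral sequences abutting to $R^{p+q}\theta$; part (4), i.e.\ $\mc{H}om_{S^{\mr{log}}_{\mr{fl}}}(A,F)=0$, is precisely what identifies $R^1\theta(F)$ with $\delta_*\mc{E}xt_{S^{\mr{log}}_{\mr{fl}}}(A,F)$ via the first, and the second then yields exactly the five-term sequence you display (which, tellingly, already carries the $\delta_*$ in its middle term). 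With that substitution your argument goes through and reaches the same conclusions. Finally, the ``delicate point'' you flag at the end is real: the argument a priori computes only $\delta_*\mc{E}xt_{S^{\mr{log}}_{\mr{fl}}}(A,F)$, and passing to $\mc{E}xt_{S^{\mr{log}}_{\mr{fl}}}(A,F)$ itself in parts (1) and (3) is not settled merely by the target being a sheaf on both sites, since knowing $\delta_*\mc{F}$ does not determine $\mc{F}$; the paper's own proof is silent on this step as well, so you have at least correctly located where the residual work lies.
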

\begin{proof}
By Proposition \ref{prop1.1} and Theorem \ref{thm1.1}, the sheaves $\Z$, $\Gml$ and $A$ on $S^{\mr{cl}}_{\mr{\acute{E}t}}$ are also sheaves on $S^{\mr{log}}_{\mr{fl}}$. Then part (4) follows from \cite[Thm. 7.4 (4)]{k-k-n2} with the help of Lemma \ref{lem1.1}.

Before going to the rest of the proof, we first introduce two spectral sequences. Let $F_1$ (resp. $F_2$) be a sheaf on $S^{\mr{cl}}_{\mr{\acute{E}t}}$ (resp. $S^{\mr{log}}_{\mr{fl}}$), then we have 
$$\delta_*\mc{H}om_{S^{\mr{log}}_{\mr{fl}}}(\delta^*F_1,F_2)=\mc{H}om_{S^{\mr{cl}}_{\mr{\acute{E}t}}}(F_1,\delta_* F_2).$$
Let $\theta$ be the functor sending $F_2$ to $\delta_*\mc{H}om_{S^{\mr{log}}_{\mr{fl}}}(\delta^*F_1,F_2)=\mc{H}om_{S^{\mr{cl}}_{\mr{\acute{E}t}}}(F_1,\delta_* F_2)$, then we get two Grothendieck spectral sequences
\begin{equation}\label{eq1.5}
E_2^{p,q}=R^p\delta_* R^q\mc{H}om_{S^{\mr{log}}_{\mr{fl}}}(\delta^*F_1,-)\Rightarrow R^{p+q}\theta
\end{equation} 
and
\begin{equation}\label{eq1.6}
E_2^{p,q}=R^p\mc{H}om_{S^{\mr{cl}}_{\mr{\acute{E}t}}}(F_1,-) R^q\delta_* \Rightarrow R^{p+q}\theta .
\end{equation}

These two spectral sequences give two exact sequences
\begin{equation}\label{eq1.7}
\begin{split}
0&\rightarrow R^1\delta_*\mc{H}om_{S^{\mr{log}}_{\mr{fl}}}(\delta^*F_1,F_2)\rightarrow R^1\theta(F_2)\rightarrow \delta_*\mc{E}xt_{S^{\mr{log}}_{\mr{fl}}}(\delta^*F_1,F_2)  \\
&\rightarrow R^2\delta_*\mc{H}om_{S^{\mr{log}}_{\mr{fl}}}(\delta^*F_1,F_2)
\end{split}
\end{equation}
and
\begin{equation}\label{eq1.8}
0\rightarrow \mc{E}xt_{S^{\mr{cl}}_{\mr{\acute{E}t}}}(F_1,\delta_* F_2)\rightarrow R^1\theta (F_2)\rightarrow \mc{H}om_{S^{\mr{cl}}_{\mr{\acute{E}t}}}(F_1,R^1\delta_* F_2).
\end{equation}
Let $F_1=A$, and let $F_2$ be $\Z$, $\Gm$ or $\Gml$, part (4) together with Theorem \ref{thm1.1} and exact sequence (\ref{eq1.7}) imply $$R^1\theta (A)\cong \delta_*\mc{E}xt_{S^{\mr{log}}_{\mr{fl}}}(A,F_2),$$
so we get an exact sequence
$$0\rightarrow \mc{E}xt_{S^{\mr{cl}}_{\mr{\acute{E}t}}}(A,\delta_* F_2)\rightarrow \delta_*\mc{E}xt_{S^{\mr{log}}_{\mr{fl}}}(A,F_2)\rightarrow \mc{H}om_{S^{\mr{cl}}_{\mr{\acute{E}t}}}(A,R^1\delta_* F_2).$$
Since $\mc{E}xt_{S^{\mr{cl}}_{\mr{\acute{E}t}}}(A,\Gm)\cong G^*$ by \cite[Thm. 7.4 (2)]{k-k-n2}, the case $F_2=\Gm$ gives part (2). Since $R^1\delta \Z=0$ by Lemma \ref{lem1.3} and $\mc{E}xt_{S^{\mr{cl}}_{\mr{\acute{E}t}}}(A,\Z)\cong \mc{H}om_{S^{\mr{log}}_{\mr{fl}}}(Y,\Z)$ by \cite[Thm. 7.4 (1)]{k-k-n2}, the case $F_2=\Z$ gives part (1). The sheaf $R^1\delta_*\Gml$ equals zero by Kato's logarithmic Hilbert 90 \cite[Cor. 5.2]{kat2}. And we have $\mc{E}xt_{S^{\mr{cl}}_{\mr{\acute{E}t}}}(A,\Gml)\cong (G^*_{\mr{log}}/X)_{S^{\mr{cl}}_{\mr{\acute{E}t}}}$ by \cite[Thm. 7.4 (3)]{k-k-n2}. Then part (3) follows from the case $F_2=\Gml$.
\end{proof}

Let $A$ be a weak log abelian variety with constant degeneration over $S$, and let $M=[Y\rightarrow G_{\mr{log}}]$ be the log 1-motive of type $(X,Y)$ defining $A$. Then the paring $<,>:X\times Y\rightarrow \Gml/\Gm$ associated to $M$ is non-degenerate. Let $M^*=[X\rightarrow G^*_{\mr{log}}]$ be the dual log 1-motive of $M$, then the pairing associated to $M^*$ is the same (up to switching the positions of $X$ and $Y$) as the paring associated to $M$, hence it is automatically non-degenerate. If $A$ is further a log abelian variety with constant degeneration, i.e. the log 1-motive $M$ is pointwise polarisable, then $M^*$ is also pointwise polarisable.

\begin{defn}\label{defn1.1}
Let $A$ be a weak log abelian variety with constant degeneration (resp. log abelian variety with constant degeneration) over $S$. The dual weak log abelian variety with constant degeneration (resp. dual log abelian variety with constant degeneration) of $A$ is the weak log abelian variety with constant degeneration (resp. log abelian variety with constant degeneration) $G_{\mr{log}}^{*(X)}/X$ associated to the log 1-motive $M^*=[X\rightarrow G^*_{\mr{log}}]$. We denote the dual of $A$ by $A^*$.
\end{defn}

Let $\mr{WLAV}^{\mr{CD}}_S$ (resp. $\mr{LAV}^{\mr{CD}}_S$) denote the category of weak log abelian varieties with constant degeneration (resp. log abelian varieties with constant degeneration) over $S$. Then we have the following proposition.

\begin{prop}\label{prop1.5}
The association of $A^*$ to $A$ gives rise to a contravariant functor $$(-)^*:\mr{WLAV}^{\mr{CD}}_S\rightarrow \mr{WLAV}^{\mr{CD}}_S$$ which restricts to a contravariant functor $$(-)^*:\mr{LAV}^{\mr{CD}}_S\rightarrow \mr{LAV}^{\mr{CD}}_S.$$
Moreover the functor is a duality functor, i.e. there is a natural isomorphism from the identity functor to $(-)^{**}$.
\end{prop}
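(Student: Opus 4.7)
The plan is to transport classical biduality of log $1$-motives through the construction $M\mapsto A=G^{(Y)}_{\mr{log}}/Y$. First I would record the contravariant duality at the level of log $1$-motives itself: given a non-degenerate (resp.\ pointwise polarisable) $M=[Y\to G_{\mr{log}}]$ of type $(X,Y)$, the dual $M^{*}=[X\to G^{*}_{\mr{log}}]$ is of type $(Y,X)$ with transposed pairing, which is again non-degenerate (resp.\ pointwise polarisable); Cartier duality for tori and Poincar\'e biduality for abelian schemes combine into a canonical biduality $M^{**}\cong M$ of log $1$-motives that also restores the pairing.

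Next I would show that $A\mapsto A^{*}$ extends to a contravariant functor by lifting every morphism $f\colon A\to A'$ of weak log abelian varieties with constant degeneration to a morphism of the underlying log $1$-motives $(f_{-1},f_{0})\colon M\to M'$. For the lift, the key is to recover $G$, $Y$, and $X$ as canonical invariants of $A$: the short exact sequence (\ref{eq1.4}) exhibits $G$ as a canonical subsheaf of $A$ (it is the maximal subsheaf representable by a semi-abelian scheme, and equivalently the kernel of $A\to \mc{H}om_{S^{\mr{log}}_{\mr{fl}}}(X,\Gml/\Gm)^{(Y)}/Y$), and the SES (\ref{eq1.3}) recovers $Y$ as the fibre of $G^{(Y)}_{\mr{log}}\to A$. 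Hence $f$ restricts to $f|_{G}\colon G\to G'$, which by Proposition \ref{prop1.3} (1)--(2) yields $f_{0}\colon G_{\mr{log}}\to G'_{\mr{log}}$ together with the map $X'\to X$ on character lattices; pulling back through (\ref{eq1.3}) then produces $f_{-1}\colon Y\to Y'$. Dualising this morphism of log $1$-motives and feeding the result through Proposition \ref{prop1.4} (3) and Theorem \ref{thm1.1} gives the desired $f^{*}\colon A'^{*}\to A^{*}$, producing the functor $(-)^{*}\colon \mr{WLAV}^{\mr{CD}}_{S}\to \mr{WLAV}^{\mr{CD}}_{S}$.

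The restriction to $\mr{LAV}^{\mr{CD}}_{S}$ is then immediate from the first paragraph: pointwise polarisability is a symmetric condition on the pairing $X\times Y\to \Gml/\Gm$, so $(-)^{*}$ sends polarisable $M$ to polarisable $M^{*}$. For the biduality, transporting the canonical isomorphism $M^{**}\cong M$ of log $1$-motives through the functorial construction $M\mapsto G^{(Y)}_{\mr{log}}/Y$ produces canonical isomorphisms $A\cong A^{**}$ which, by construction, assemble into a natural transformation $\mr{id}\Rightarrow (-)^{**}$.

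The main obstacle is the uniqueness of the lift in the second step, i.e.\ that $G$, $Y$, and $X$ depend only on $A$ and not on the chosen presentation $A=G^{(Y)}_{\mr{log}}/Y$. Pinning this down rigorously—most naturally via the intrinsic characterisation of $G$ sketched above together with the recovery of $Y$ as the kernel in (\ref{eq1.3})—is the only non-formal point; once it is in place, the remainder of the proof is a routine application of the functoriality and duality already established in Propositions \ref{prop1.3}, \ref{prop1.4} and Theorem \ref{thm1.1}.
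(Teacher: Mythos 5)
Your overall route is the same as the paper's: establish the (contravariant, involutive) duality at the level of non-degenerate resp.\ pointwise polarisable log $1$-motives and transport it through the construction $M\mapsto G^{(Y)}_{\mr{log}}/Y$. The paper's proof is essentially a citation: it invokes \cite[1.7]{k-k-n4} and \cite[Thm.~3.4]{k-k-n2} (reproduced in the log flat setting as Theorem~\ref{thm1.5}), i.e.\ the statement that $M\mapsto G^{(Y)}_{\mr{log}}/Y$ is an \emph{equivalence of categories}, so that every morphism $f\colon A\to A'$ lifts uniquely to $(f_{-1},f_0)\colon M\to M'$ and the duality of log $1$-motives descends for free. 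What you identify as ``the only non-formal point''---that $G$, $Y$, $X$ are intrinsic to $A$ and that $f$ lifts to the $1$-motives---is precisely the content of that cited equivalence, and your sketch of it is where the genuine gaps sit: the claim that $G$ is the maximal semi-abelian subsheaf of $A$ (the paper instead uses the vanishing $\mr{Hom}_{S^{\mr{log}}_{\mr{fl}}}(G,\mc{Q}'/Y')=0$ to see that $f$ restricts to $G\to G'$), and the passage from $f_0$ to $f_{-1}$, which requires first constructing the lift $\tilde f\colon G^{(Y)}_{\mr{log}}\to G'^{(Y')}_{\mr{log}}$ compatibly with the extension classes---``pulling back through (\ref{eq1.3})'' does not by itself produce this. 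Since Theorem~\ref{thm1.5} is available in the paper, you should cite it (or \cite[Thm.~3.4]{k-k-n2}, \cite[1.7]{k-k-n4}) rather than re-derive it; with that substitution your argument matches the paper's and the remaining steps (symmetry of non-degeneracy and polarisability under transposing the pairing, naturality of $M\cong M^{**}$) are correct as you state them.
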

\begin{proof}
This follows from \cite[1.7]{k-k-n4}, \cite[Thm. 3.4]{k-k-n2}, and the corresponding duality theory of log 1-motives over $S$.
\end{proof}

\begin{rmk}\label{rmk1.4}
Given an abelian scheme $A$ over the underlying scheme of $S$, the dual abelian scheme $A^*$ can also be interpreted as $\mc{E}xt_{S^{\mr{cl}}_{\mr{fl}}}(A,\Gm)$. One may wonder if something similar happens in the case of (weak) log abelian varieties with constant degeneration. Note that in the log world, $\Gml$ plays the role of $\Gm$ in the non-log world. Part (3) of Theorem \ref{thm1.2} indicates that $\mc{E}xt_{S^{\mr{log}}_{\mr{fl}}}(A,\Gml)\cong G^*_{\mr{log}}/X$ is not $A^*=G^{*(X)}_{\mr{log}}/X$ but closely related to it.
\end{rmk}

The following is a partial reformulation of \cite[Thm. 7.3]{k-k-n2}.

\begin{thm}\label{thm1.3}
Let $X$ and $Y$ be two finitely generated free $\Z$-modules, and let $<,>:X\times Y\rightarrow \Gml/\Gm$ be a non-degenerate pairing on $S^{\mr{log}}_{\mr{fl}}$.

Let $G$ be a commutative group scheme over the underlying scheme of $S$ which is an extension of an abelian scheme $B$ by a torus $T$ over $S$. Assume that $X$ is the character group of $T$. Let $T_{\mr{log}}^{(Y)}=\mc{H}om_{S^{\mr{log}}_{\mr{fl}}}(X,\Gml)^{(Y)}\subset T_{\mr{log}}=\mc{H}om_{S^{\mr{log}}_{\mr{fl}}}(X,\Gml)$ (resp. $G_{\mr{log}}^{(Y)}\subset G_{\mr{log}}$) be the inverse image of $$\mc{H}om_{S^{\mr{log}}_{\mr{fl}}}(X,\Gml/\Gm)^{(Y)}\subset \mc{H}om_{S^{\mr{log}}_{\mr{fl}}}(X,\Gml/\Gm)\cong T_{\mr{log}}/T\cong G_{\mr{log}}/G.$$
\begin{enumerate}[(1)]
\item Let $H$ be a commutative group scheme over the underlying scheme of $S$. Then we have
$$\mc{H}om_{S^{\mr{log}}_{\mr{fl}}}(G_{\mr{log}}^{(Y)},H)\cong \mc{H}om_{S^{\mr{log}}_{\mr{fl}}}(B,H), \quad
\mc{H}om_{S^{\mr{log}}_{\mr{fl}}}(G_{\mr{log}}^{(Y)}/G,H)=0.$$
If further $H$ satisfies the condition $R^1\delta_* H=0$, we also have
$$\mc{E}xt_{S^{\mr{log}}_{\mr{fl}}}(G_{\mr{log}}^{(Y)},H)\cong \mc{E}xt_{S^{\mr{log}}_{\mr{fl}}}(B,H)$$
and
$$\mc{E}xt_{S^{\mr{log}}_{\mr{fl}}}(G_{\mr{log}}^{(Y)}/G,H)\cong \mc{H}om_{S^{\mr{log}}_{\mr{fl}}}(T,H).$$
In particular, since $R^1\delta_*\Z=0$, we have 
$$\mc{E}xt_{S^{\mr{log}}_{\mr{fl}}}(G_{\mr{log}}^{(Y)},\Z)\cong \mc{E}xt_{S^{\mr{log}}_{\mr{fl}}}(B,\Z)=0$$
and
$$\mc{E}xt_{S^{\mr{log}}_{\mr{fl}}}(G_{\mr{log}}^{(Y)}/G,\Z)\cong \mc{H}om_{S^{\mr{log}}_{\mr{fl}}}(T,\Z)=0.$$
\item We have $\mc{H}om_{S^{\mr{log}}_{\mr{fl}}}(T_{\mr{log}}^{(Y)},\Gml)\cong X, \mc{H}om_{S^{\mr{log}}_{\mr{fl}}}(T_{\mr{log}}^{(Y)}/T,\Gml)=0,\\ 
\mc{E}xt_{S^{\mr{log}}_{\mr{fl}}}(T_{\mr{log}}^{(Y)}/T,\Gml)=0$, and $\delta_*\mc{E}xt_{S^{\mr{log}}_{\mr{fl}}}(T_{\mr{log}}^{(Y)},\Gml)\subset R^2\delta_* X$.
\item Let $G'$ be another commutative group scheme over $S$ which is an extension of an abelian scheme $B'$ by a torus $T'$ over $S$. Let $X'$ be the character group of $T'$. Then we have 
$$\mc{H}om_{S^{\mr{log}}_{\mr{fl}}}(G,G')\xrightarrow{\cong} \mc{H}om_{S^{\mr{log}}_{\mr{fl}}}(G_{\mr{log}}^{(Y)},G'_{\mr{log}})$$
and
$$\mc{H}om_{S^{\mr{log}}_{\mr{fl}}}(X',X)\otimes_{\Z}\Q\xrightarrow{\cong} \mc{H}om_{S^{\mr{log}}_{\mr{fl}}}(G_{\mr{log}}^{(Y)}/G,G'_{\mr{log}}/G').$$
\end{enumerate}
\end{thm}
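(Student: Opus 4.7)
My plan is to transfer the classical \'etale result \cite[Thm. 7.3]{k-k-n2} to the log flat site, using Lemma \ref{lem1.2} to move Hom sheaves between the two sites and the Grothendieck spectral sequences (\ref{eq1.5}), (\ref{eq1.6}) from the proof of Theorem \ref{thm1.2} to handle Ext sheaves.

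For every Hom assertion in (1), (2), (3), all the sheaves involved---namely $G$, $B$, $T$, $X$, $H$, $\Gml$, $G_{\mr{log}}^{(Y)}$, $T_{\mr{log}}^{(Y)}$ and their quotients by $G$ or $T$---are sheaves for both the classical \'etale and the log flat topologies, by representability, Proposition \ref{prop1.1}, Proposition \ref{prop1.4} and (for the relevant quotients) the argument of Theorem \ref{thm1.1}. Lemma \ref{lem1.2} then identifies the two Hom sheaves and reduces each Hom statement to its counterpart in \cite[Thm. 7.3]{k-k-n2}.

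For the Ext assertions in (1) I would take $F_1\in\{G_{\mr{log}}^{(Y)},\,G_{\mr{log}}^{(Y)}/G\}$ and $F_2=H$ in (\ref{eq1.7}), (\ref{eq1.8}). Under the hypothesis $R^1\delta_*H=0$, the sequence (\ref{eq1.8}) collapses to $R^1\theta(H)\cong \mc{E}xt_{S^{\mr{cl}}_{\mr{\acute{E}t}}}(F_1,H)$, which by the classical version is $\mc{E}xt_{S^{\mr{cl}}_{\mr{\acute{E}t}}}(B,H)$ or $\mc{H}om_{S^{\mr{cl}}_{\mr{\acute{E}t}}}(T,H)$ as appropriate. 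Comparing with (\ref{eq1.7}), and using Lemma \ref{lem1.2} to see that $\mc{H}om_{S^{\mr{log}}_{\mr{fl}}}(F_1,H)$ is already a classical \'etale sheaf (so its $R^i\delta_*$ for $i=1,2$ can be analysed via the Leray spectral sequence for $\delta=m\circ\varepsilon_{\mr{fl}}$), one reads off the desired log flat Ext isomorphism. Part (2) is proved identically with $F_2=\Gml$, invoking $R^1\delta_*\Gml=0$ (Kato's logarithmic Hilbert 90); the inclusion $\delta_*\mc{E}xt_{S^{\mr{log}}_{\mr{fl}}}(T_{\mr{log}}^{(Y)},\Gml)\subset R^2\delta_*X$ is then read off the tail of (\ref{eq1.7}) together with the already-established $\mc{H}om_{S^{\mr{log}}_{\mr{fl}}}(T_{\mr{log}}^{(Y)},\Gml)\cong X$.

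Part (3) follows from part (1) and the Hom identifications. The first isomorphism is the case $H=G'_{\mr{log}}$, combined with Proposition \ref{prop1.3} (2) to rewrite $\mc{H}om_{S^{\mr{log}}_{\mr{fl}}}(B,G'_{\mr{log}})\cong \mc{H}om_{S^{\mr{log}}_{\mr{fl}}}(G,G')$. The second isomorphism is obtained after identifying both sides with Hom sheaves into $\Gml/\Gm$ via Proposition \ref{prop1.2}, then applying the classical \'etale calculation tensored with $\Q$ that already appears in \cite[Thm. 7.3]{k-k-n2}. The main obstacle I anticipate is the passage through $\delta_*$ in the Ext parts: the spectral sequences a priori compute only $\delta_*$ of the log flat Ext, so one must verify that the natural maps induced by functoriality in fact promote the classical \'etale isomorphisms to isomorphisms of log flat sheaves. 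This should be resolved by observing that every right-hand side in the theorem is built from objects ($B$, $T$, $X$, $\Gml$) that are common to both sites, so the map between them that reduces to the classical isomorphism after applying $\delta_*$ is forced to already be an isomorphism on $S^{\mr{log}}_{\mr{fl}}$.
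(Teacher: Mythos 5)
Your overall strategy---transfer the Hom statements via Lemma \ref{lem1.2} and handle the Ext statements by playing the two exact sequences (\ref{eq1.7}) and (\ref{eq1.8}) against each other under the hypotheses $R^1\delta_*H=0$ and $R^1\delta_*\Gml=0$---is exactly what the paper does for part (1), part (2), and the first isomorphism of part (3). However, there is a genuine gap in your treatment of the second isomorphism of part (3), and it stems from an over-reach in your first paragraph: the quotients $G_{\mr{log}}^{(Y)}/G$, $T_{\mr{log}}^{(Y)}/T$ and $G'_{\mr{log}}/G'$ are, by the paper's convention, quotients \emph{in the log flat topology}, and these are \emph{not} the classical \'etale quotients regarded as log flat sheaves. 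Lemma \ref{lem1.2} therefore does not apply to Hom sheaves \emph{into} them. Concretely, Lemma \ref{lem1.3}\,(1) shows $\delta_*(\Gml/\Gm)=(\Gml/\Gm)_{S^{\mr{cl}}_{\mr{\acute{E}t}}}\otimes_{\Z}\Q$, via Kato's computation $R^1\varepsilon_{\mr{fl}*}\Gm=(\Gml/\Gm)_{S^{\mr{cl}}_{\mr{fl}}}\otimes_{\Z}\Q/\Z$; the log flat quotient is strictly larger than the classical one. This is precisely the source of the factor $\otimes_{\Z}\Q$ in the statement $\mc{H}om_{S^{\mr{log}}_{\mr{fl}}}(X,X')\otimes_{\Z}\Q\cong\mc{H}om_{S^{\mr{log}}_{\mr{fl}}}(G_{\mr{log}}^{(Y)}/G,G'_{\mr{log}}/G')$, a factor which is absent from the classical \'etale statement in \cite[Thm. 7.3/7.4 (3)]{k-k-n2}. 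Your plan of ``applying the classical \'etale calculation'' through Lemma \ref{lem1.2} has no mechanism to produce this $\Q$, and carried out naively it would yield the wrong (un-tensored) answer. The paper instead reduces to computing $\mc{H}om_{S^{\mr{log}}_{\mr{fl}}}(T_{\mr{log}}^{(Y)},G'_{\mr{log}}/G')$ using the short exact sequence $0\rightarrow T\rightarrow T_{\mr{log}}^{(Y)}\rightarrow G_{\mr{log}}^{(Y)}/G\rightarrow 0$, kills $\mc{H}om_{S^{\mr{log}}_{\mr{fl}}}(T,G'_{\mr{log}}/G')$ by Lemma \ref{lem1.3}\,(2), and then pushes forward by $\delta_*$ so that Lemma \ref{lem1.3}\,(1) converts the target into $(G'_{\mr{log}}/G')_{S^{\mr{cl}}_{\mr{\acute{E}t}}}\otimes_{\Z}\Q$ before the classical computation is invoked. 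You need this extra lemma, or something equivalent, to close the argument.

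A secondary issue: for the first isomorphism of part (3) you propose to specialise part (1) to $H=G'_{\mr{log}}$, but part (1) is stated only for $H$ a commutative group \emph{scheme}, and $G'_{\mr{log}}$ is not representable. The paper obtains this isomorphism directly from the corresponding classical \'etale isomorphism together with Lemma \ref{lem1.2}, which is legitimate because both $G_{\mr{log}}^{(Y)}$ and $G'_{\mr{log}}$ (unlike the quotients) genuinely are the same sheaf on both sites.
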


Our proof of Theorem \ref{thm1.3} follows the structure of the proof \cite[7.17-7.21]{k-k-n2} of \cite[Thm. 7.3]{k-k-n2}. Firstly we state a lemma which almost corresponds to \cite[7.17]{k-k-n2}.

\begin{lem}\label{lem1.4}
Let the notation be as in Theorem \ref{thm1.3}. Then the following hold.
\begin{enumerate}[(1)]
\item $\mc{H}om_{S^{\mr{log}}_{\mr{fl}}}(T_{\mr{log}}^{(Y)},H)=0;$
\item If $R^1\delta_* H=0$, then $\mc{E}xt_{S^{\mr{log}}_{\mr{fl}}}(T_{\mr{log}}^{(Y)},H)=0$;
\item $\delta_* \mc{E}xt_{S^{\mr{log}}_{\mr{fl}}}(T_{\mr{log}}^{(Y)},\Gml)\subset R^2\delta_* X$.
\end{enumerate}
\end{lem}
\begin{proof}
Part (1) follows from the corresponding result in the classical \'etale topology of \cite[7.17]{k-k-n2}, with the help of Lemma \ref{lem1.1}. 

We use the exact sequences (\ref{eq1.7}) and (\ref{eq1.8}) from the proof of Theorem \ref{thm1.2} to investigate the sheaves $\mc{E}xt_{S^{\mr{log}}_{\mr{fl}}}(T_{\mr{log}}^{(Y)},H)$ and $\mc{E}xt_{S^{\mr{log}}_{\mr{fl}}}(T_{\mr{log}}^{(Y)},\Gml)$. If $R^1\delta_* H=0$, then the vanishing of $\mc{H}om_{S^{\mr{log}}_{\mr{fl}}}(T_{\mr{log}}^{(Y)},H)$ and $\mc{E}xt_{S^{\mr{cl}}_{\mr{\acute{E}t}}}(T_{\mr{log}}^{(Y)},H)$ implies part (2) via (\ref{eq1.7}) and (\ref{eq1.8}). We have $$\mc{E}xt_{S^{\mr{cl}}_{\mr{\acute{E}t}}}(T_{\mr{log}}^{(Y)},\Gml)=0$$ by \cite[7.17]{k-k-n2} and $R^1\delta_*\Gml=0$ by Kato's logarithmic Hilbert 90. We also have
$$\mc{H}om_{S^{\mr{log}}_{\mr{fl}}}(T_{\mr{log}}^{(Y)},\Gml)=\mc{H}om_{S^{\mr{cl}}_{\mr{\acute{E}t}}}(T_{\mr{log}}^{(Y)},\Gml)\cong X$$
by \cite[Thm. 7.3 (2)]{k-k-n2}. Then the inclusion of part (3) follows from (\ref{eq1.7}) and (\ref{eq1.8}). 
\end{proof}

\begin{rmk}\label{rmk1.5}
In the proof of part (3) of Lemma \ref{lem1.4}, since
$$\mc{E}xt_{S^{\mr{cl}}_{\mr{\acute{E}t}}}(T_{\mr{log}}^{(Y)},\Gml)=R^1\delta_*\Gml=0,$$
we must have $R^1\delta_* X=0$ by the exact sequences (\ref{eq1.7}) and (\ref{eq1.8}). This gives rise to an alternative proof of Lemma \ref{lem1.3}.
\end{rmk}

\begin{proof}[Proof of Theorem \ref{thm1.3}:]
Firstly we prove part (1). The isomorphism $$\mc{H}om_{S^{\mr{log}}_{\mr{fl}}}(G_{\mr{log}}^{(Y)},H)\cong \mc{H}om_{S^{\mr{log}}_{\mr{fl}}}(B,H)$$ comes from the corresponding isomorphism of \cite[Thm. 7.3 (1)]{k-k-n2} for the classical \'etale topology with the help of Lemma \ref{lem1.1}. The short exact sequence $0\rightarrow T\rightarrow T_{\mr{log}}^{(Y)}\rightarrow G_{\mr{log}}^{(Y)}/G\rightarrow 0$ gives rise to a long exact sequence
\begin{align*}
0& \rightarrow \mc{H}om_{S^{\mr{log}}_{\mr{fl}}}(G_{\mr{log}}^{(Y)}/G,H)\rightarrow \mc{H}om_{S^{\mr{log}}_{\mr{fl}}}(T_{\mr{log}}^{(Y)},H)\rightarrow \mc{H}om_{S^{\mr{log}}_{\mr{fl}}}(T,H)  \\
 & \rightarrow \mc{E}xt_{S^{\mr{log}}_{\mr{fl}}}(G_{\mr{log}}^{(Y)}/G,H)\rightarrow \mc{E}xt_{S^{\mr{log}}_{\mr{fl}}}(T_{\mr{log}}^{(Y)},H).
\end{align*}
Since we have $\mc{H}om_{S^{\mr{log}}_{\mr{fl}}}(T_{\mr{log}}^{(Y)},H)=0$ by part (1) of Lemma \ref{lem1.4}, it follows that $\mc{H}om_{S^{\mr{log}}_{\mr{fl}}}(G_{\mr{log}}^{(Y)}/G,H)=0$. Assuming $R^1\delta_* H=0$, we get $\mc{E}xt_{S^{\mr{log}}_{\mr{fl}}}(T_{\mr{log}}^{(Y)},H)=0$ by part (2) of Lemma \ref{lem1.4}. Hence we have
$$\mc{E}xt_{S^{\mr{log}}_{\mr{fl}}}(G_{\mr{log}}^{(Y)},H)\cong \mc{E}xt_{S^{\mr{log}}_{\mr{fl}}}(B,H), \quad\mc{E}xt_{S^{\mr{log}}_{\mr{fl}}}(G_{\mr{log}}^{(Y)}/G,H)\cong \mc{H}om_{S^{\mr{log}}_{\mr{fl}}}(T,H).$$

Next we prove part (2). The exact sequence $0\rightarrow T\rightarrow T_{\mr{log}}^{(Y)}\rightarrow T_{\mr{log}}^{(Y)}/T\rightarrow 0$ gives rise to a long exact sequence
\begin{align*}
0& \rightarrow \mc{H}om_{S^{\mr{log}}_{\mr{fl}}}(T_{\mr{log}}^{(Y)}/T,\Gml)\rightarrow \mc{H}om_{S^{\mr{log}}_{\mr{fl}}}(T_{\mr{log}}^{(Y)},\Gml)  \\
&\xrightarrow{\alpha} \mc{H}om_{S^{\mr{log}}_{\mr{fl}}}(T,\Gml) \rightarrow \mc{E}xt_{S^{\mr{log}}_{\mr{fl}}}(T_{\mr{log}}^{(Y)}/T,\Gml).
\end{align*}
Since the map $\mc{H}om_{S^{\mr{cl}}_{\mr{\acute{E}t}}}(T_{\mr{log}}^{(Y)},\Gml)\rightarrow \mc{H}om_{S^{\mr{cl}}_{\mr{\acute{E}t}}}(T,\Gml)$ is canonically identical to the identity map $1_{X}:X\rightarrow X$ by \cite[7.20]{k-k-n2}, so is the map $\alpha$. Hence we have $\mc{H}om_{S^{\mr{log}}_{\mr{fl}}}(T_{\mr{log}}^{(Y)}/T,\Gml)=0$. Since 
$R^1\delta_*\Gml=0$
and $\mc{E}xt_{S^{\mr{cl}}_{\mr{\acute{E}t}}}((T_{\mr{log}}^{(Y)}/T)_{S^{\mr{cl}}_{\mr{\acute{E}t}}},\Gml)=0$ by \cite[Thm. 7.3 (2)]{k-k-n2}, we get  $\mc{E}xt_{S^{\mr{log}}_{\mr{fl}}}(T_{\mr{log}}^{(Y)}/T,\Gml)=0$ by the exact sequences (\ref{eq1.7}) and (\ref{eq1.8}). The inclusion $\delta_*\mc{E}xt_{S^{\mr{log}}_{\mr{fl}}}(T_{\mr{log}}^{(Y)},\Gml)\subset R^2\delta_* X$ is just part (3) of Lemma \ref{lem1.4}. The isomorphism $\mc{H}om_{S^{\mr{log}}_{\mr{fl}}}(T_{\mr{log}}^{(Y)},\Gml)\cong X$ has been proved in the proof of Lemma \ref{lem1.4}.

At last, we show part (3). The isomorphism $$\mc{H}om_{S^{\mr{log}}_{\mr{fl}}}(G,G')\xrightarrow{\cong} \mc{H}om_{S^{\mr{log}}_{\mr{fl}}}(G_{\mr{log}}^{(Y)},G'_{\mr{log}})$$ comes from the corresponding isomorphism of \cite[Thm. 7.3 (3)]{k-k-n2} with the help of Lemma \ref{lem1.1}. The short exact sequence $$0\rightarrow T\rightarrow T^{(Y)}_{\mr{log}}\rightarrow G^{(Y)}_{\mr{log}}/G\rightarrow 0$$ gives an exact sequence 
\begin{align*}
 0\rightarrow &\mc{H}om_{S^{\mr{log}}_{\mr{fl}}}(G_{\mr{log}}^{(Y)}/G,G'_{\mr{log}}/G')\rightarrow \mc{H}om_{S^{\mr{log}}_{\mr{fl}}}(T_{\mr{log}}^{(Y)},G'_{\mr{log}}/G')  \\
 \rightarrow &\mc{H}om_{S^{\mr{log}}_{\mr{fl}}}(T,G'_{\mr{log}}/G').
\end{align*}
The sheaf $\mc{H}om_{S^{\mr{log}}_{\mr{fl}}}(T,G'_{\mr{log}}/G')$ is zero by part (2) of Lemma \ref{lem1.2}, hence we are reduced to compute $\mc{H}om_{S^{\mr{log}}_{\mr{fl}}}(T_{\mr{log}}^{(Y)},G'_{\mr{log}}/G')$. We have 
\begin{align*}
\delta_*\mc{H}om_{S^{\mr{log}}_{\mr{fl}}}(T_{\mr{log}}^{(Y)},G'_{\mr{log}}/G')&=\mc{H}om_{S^{\mr{cl}}_{\mr{\acute{E}t}}}(T_{\mr{log}}^{(Y)},\delta_*(G'_{\mr{log}}/G'))  \\
&=\mc{H}om_{S^{\mr{cl}}_{\mr{\acute{E}t}}}(T_{\mr{log}}^{(Y)},(G'_{\mr{log}}/G')_{S^{\mr{cl}}_{\mr{\acute{E}t}}}\otimes_{\Z}\Q)
\end{align*}
where the second equality comes from part (1) of Lemma \ref{lem1.2}. By \cite[Thm. 7.3 (3)]{k-k-n2}, 
\begin{align*}
\mc{H}om_{S^{\mr{cl}}_{\mr{\acute{E}t}}}(X',X)\xrightarrow{\cong} 
&\mc{H}om_{S^{\mr{cl}}_{\mr{\acute{E}t}}}((G_{\mr{log}}^{(Y)}/G)_{S^{\mr{cl}}_{\mr{\acute{E}t}}},(G'_{\mr{log}}/G')_{S^{\mr{cl}}_{\mr{\acute{E}t}}})    \\
\xrightarrow{\cong} &\mc{H}om_{S^{\mr{cl}}_{\mr{\acute{E}t}}}(T_{\mr{log}}^{(Y)},(G'_{\mr{log}}/G')_{S^{\mr{cl}}_{\mr{\acute{E}t}}}).
\end{align*}
It follows that $$\mc{H}om_{S^{\mr{log}}_{\mr{fl}}}(T_{\mr{log}}^{(Y)},G'_{\mr{log}}/G')\cong \mc{H}om_{S^{\mr{cl}}_{\mr{\acute{E}t}}}(X',X)\otimes_{\Z}\Q=\mc{H}om_{S^{\mr{log}}_{\mr{fl}}}(X',X)\otimes_{\Z}\Q.$$
\end{proof}

Since weak log abelian varieties with constant degeneration are defined in terms of log 1-motives, it is natural to try to relate every aspect of weak log abelian varieties with constant degeneration to the corresponding aspect of log 1-motives. In particular, we are keen on the relation on the homomorphisms. The following theorem is a combination of \cite[Thm. 3.4]{k-k-n2} and \cite[1.7]{k-k-n4}.

\begin{thm}\label{thm1.5}
The functor $[Y\rightarrow G_{\mr{log}}]\mapsto (G^{(Y)}_{\mr{log}}/Y)_{S^{\mr{cl}}_{\mr{\acute{Et}}}}=G^{(Y)}_{\mr{log}}/Y$ induces an equivalence from the category of non-degenerate log 1-motives (resp. pointwise polarisable log 1-motives) over $S$ to that of weak log abelian varieties with constant degeneration (resp. log abelian varieties with constant degeneration) over $S$.
\end{thm}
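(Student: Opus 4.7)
The plan is to reduce this theorem to its classical \'etale analogue \cite[Thm. 3.4]{k-k-n2} (together with the non-degenerate version from \cite[1.7]{k-k-n4}) by means of the bridge results established earlier in this section. Essentially, everything relevant -- the source category (log 1-motives), the target category (weak log abelian varieties with constant degeneration), and the Hom sheaves between their objects -- has already been shown to be insensitive to whether one works in $S^{\mr{cl}}_{\mr{\acute{E}t}}$ or $S^{\mr{log}}_{\mr{fl}}$.

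First I would check that the functor is well-defined. Given a log 1-motive $M=[Y\to G_{\mr{log}}]$, Theorem \ref{thm1.1} tells us that the classical \'etale sheaf $(G_{\mr{log}}^{(Y)}/Y)_{S^{\mr{cl}}_{\mr{\acute{E}t}}}$ coincides with the log flat quotient $G_{\mr{log}}^{(Y)}/Y$, so the target lies in $\mr{WLAV}^{\mr{CD}}_S$. For a morphism $(f_{-1},f_0):M\to M'$, Proposition \ref{prop1.4}(3) produces an induced map $G_{\mr{log}}^{(Y)}\to G'^{(Y')}_{\mr{log}}$ carrying $Y$ into $Y'$ via $f_{-1}$, hence descending to a map between the quotients. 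The polarisable subcategory is visibly preserved.

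Second, essential surjectivity is immediate from the very definition of a weak log abelian variety with constant degeneration: each such $A$ arises as $(G_{\mr{log}}^{(Y)}/Y)_{S^{\mr{cl}}_{\mr{\acute{E}t}}}$ for some non-degenerate log 1-motive, and by Theorem \ref{thm1.1}(2) this is precisely the value of our functor on that log 1-motive. The polarisable case is identical.

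Third, full faithfulness will follow by comparing Hom groups on the two sites. On the log 1-motive side, morphisms consist of maps between the sheaves $Y$, $Y'$, $G_{\mr{log}}$, $G'_{\mr{log}}$, all of which are sheaves on both sites (Proposition \ref{prop1.1}), so morphisms of log 1-motives computed over $S^{\mr{log}}_{\mr{fl}}$ agree with those computed over $S^{\mr{cl}}_{\mr{\acute{E}t}}$ (using also Proposition \ref{prop1.3}(2) to identify homomorphisms of the underlying semi-abelian schemes). On the log abelian variety side, Lemma \ref{lem1.2} applied to $F=A$ and $G=A'$ (both being sheaves on both sites by Theorem \ref{thm1.1}) yields $\mr{Hom}_{S^{\mr{log}}_{\mr{fl}}}(A,A')=\mr{Hom}_{S^{\mr{cl}}_{\mr{\acute{E}t}}}(A,A')$. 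Combining these two identifications with the classical \'etale equivalence \cite[Thm. 3.4]{k-k-n2} (and its non-degenerate extension in \cite[1.7]{k-k-n4}) gives the desired bijection on Hom sets, and hence the equivalence of categories.

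The only real subtlety, rather than a genuine obstacle, is the verification that the equivalence restricts correctly to the polarisable subcategory; but since pointwise polarisability is a condition on the pairing $\langle\,,\,\rangle:X\times Y\to \Gml/\Gm$ which is intrinsic to the log 1-motive and topology-independent (cf.\ Remark \ref{rmk1.1}), this restriction is automatic from the classical \'etale statement.
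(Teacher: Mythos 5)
Your proposal is correct and follows essentially the same strategy as the paper: reduce to the classical \'etale equivalence of \cite[Thm. 3.4]{k-k-n2} (together with its non-degenerate extension in \cite[1.7]{k-k-n4}) via the comparison results of this section. The only difference is one of packaging --- the paper transports the \emph{proof} of \cite[Thm. 8.1]{k-k-n2} to the log flat site, re-verifying its ingredients through Theorems \ref{thm1.2} and \ref{thm1.3}, whereas you transport the \emph{statement} directly, using Theorem \ref{thm1.1} and Lemma \ref{lem1.2} to identify the objects, the Hom sets and the functor across the two topologies; both come to the same thing.
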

\begin{proof}
See \cite[\S 8]{k-k-n2}.
\end{proof}

Let $f:A\rightarrow A'$ be a homomorphism between two weak log abelian varieties with constant degeneration over $S$, and let $M=[Y\rightarrow G_{\mr{log}}],M'=[Y'\rightarrow G'_{\mr{log}}]$ be the log 1-motives defining $A$ and $A'$ respectively. By Theorem \ref{thm1.5}, $f$ comes from a homomorphism from $M$ to $M'$, and we denote it by $(f_{-1},f_0)$. The proof of \cite[Thm. 8.1]{k-k-n2} actually shows that $f_0$ comes from a unique homomorphism from $G$ to $G'$, and we denote it by $f_{\mr{c}}$ by convention \footnote{Here the subscript c stands for connected.}. The homomorphism $f_{\mr{c}}$ can also be obtained from the following diagram
$$\xymatrix{
0\ar[r] &G\ar[r]\ar@{-->}[d]^{f_{\mr{c}}} &A\ar[r]\ar[d]^f &\mc{Q}/Y\ar[r]\ar@{-->}[d]^{f_{\mr{d}}} &0 \\
0\ar[r] &G'\ar[r]  &A'\ar[r]  &\mc{Q}'/Y'\ar[r] &0
}$$
with exact rows, together with the vanishing of $\mr{Hom}_{S^{\mr{log}}_{\mr{fl}}}(G,\mc{Q}'/Y')$ (see Lemma \ref{lem1.5} below). Here the exact rows come from part (3) of Theorem \ref{thm1.1}, and $\mc{Q}$ (resp. $\mc{Q}'$) denotes the sheaf $\mc{H}om_{S^{\mr{log}}_{\mr{fl}}}(X,\Gml/\Gm)^{(Y)}$ (resp. $\mc{H}om_{S^{\mr{log}}_{\mr{fl}}}(X',\Gml/\Gm)^{(Y')}$). Furthermore the diagram gives a homomorphism $\mc{Q}/Y\rightarrow \mc{Q}'/Y'$ which we denote by $f_{\mr{d}}$ \footnote{Here the subscript d stands for discrete.}. The procedure of getting $f$ from $(f_{-1},f_0)$ also gives a homomorphism $\tilde{A}:=G_{\mr{log}}^{(Y)}\rightarrow G'^{(Y')}_{\mr{log}}=:\tilde{A}'$, which we denote by $\tilde{f}$. The homomorphism $\tilde{f}$ induces a homomorphism $\mc{Q}\rightarrow \mc{Q}'$ which we denote by $\tilde{f}_{\mr{d}}$ \footnote{Here the symbol $\tilde{}$ for $\tilde{f}$ (resp. $\tilde{f}_{\mr{d}}$) stands for the lifting of $f$ (resp. $f_{\mr{d}}$) to the ``universal coverings'', and $\tilde{A}$ (resp. $\mc{Q}$) could be thought of as the ``universal covering'' of $A$ (resp. $\mc{Q}/Y$).}.

\begin{lem}\label{lem1.5}
We have $\mr{Hom}_{S^{\mr{log}}_{\mr{fl}}}(G,\mc{Q}'/Y')=0$.
\end{lem}
\begin{proof}
As before, let $\delta: S^{\mr{log}}_{\mr{fl}}\rightarrow S^{\mr{cl}}_{\mr{\acute{E}t}}$ be the canonical map between these two sites. By part (3) of Lemma \ref{lem1.2}, we have a canonical isomorphism $\delta_*\mc{Q}'\cong \mc{Q}'_{\mr{cl}}\otimes_{\Z}\Q$ with $\mc{Q}'_{\mr{cl}}:=\mc{H}om_{S^{\mr{cl}}_{\mr{\acute{E}t}}}(X',(\Gml/\Gm)_{S^{\mr{cl}}_{\mr{\acute{E}t}}})^{(Y')}$. We also have $\delta_*(\mc{Q}'/Y')=(\delta_*\mc{Q}'/Y')_{S^{\mr{cl}}_{\mr{\acute{E}t}}}$ by Lemma \ref{lem1.3}. Hence 
\begin{align*}
\mr{Hom}_{S^{\mr{log}}_{\mr{fl}}}(G,\mc{Q}'/Y')  
&=\mr{Hom}_{S^{\mr{cl}}_{\mr{\acute{E}t}}}(G,\delta_*(\mc{Q}'/Y'))  \\
&=\mr{Hom}_{S^{\mr{cl}}_{\mr{\acute{E}t}}}(G,(\delta_*\mc{Q}'/Y')_{S^{\mr{cl}}_{\mr{\acute{E}t}}})  \\
&=\mr{Hom}_{S^{\mr{cl}}_{\mr{\acute{E}t}}}(G,((\mc{Q}'_{\mr{cl}}\otimes_{\Z}\Q)/Y')_{S^{\mr{cl}}_{\mr{\acute{E}t}}}).
\end{align*}
The group $\mr{Hom}_{S^{\mr{cl}}_{\mr{\acute{E}t}}}(G,(\mc{Q}'_{\mr{cl}}\otimes_{\Z}\Q/Y')_{S^{\mr{cl}}_{\mr{\acute{E}t}}})$ equals zero by the same reason as in \cite[9.2]{k-k-n2}. Hence $\mr{Hom}_{S^{\mr{log}}_{\mr{fl}}}(G,\mc{Q}'/Y')$ vanishes.
\end{proof}

For practical reason, we state the following proposition, which is nothing else than a tedious summary of various maps constructed out of $f:A\rightarrow A'$.

\begin{prop}\label{prop1.6}
Let $f:A\rightarrow A'$ be a homomorphism of weak log abelian varieties with constant degeneration over $S$. Then $f$ induces the following four commutative diagrams
\begin{equation}\label{eq1.9}
\xymatrix{
0\ar[r] &Y\ar[r]\ar[d]^{f_{-1}} &G^{(Y)}_{\mr{log}}\ar[r]\ar[d]^{\tilde{f}} &A\ar[r]\ar[d]^f &0 \\
0\ar[r] &Y'\ar[r]  &G'^{(Y')}_{\mr{log}}\ar[r]  &A'\ar[r] &0
}
\end{equation}
\begin{equation}\label{eq1.10}
\xymatrix{
0\ar[r] &G\ar[r]\ar[d]^{f_{\mr{c}}} &A\ar[r]\ar[d]^f &\mc{Q}/Y\ar[r]\ar[d]^{f_{\mr{d}}} &0 \\
0\ar[r] &G'\ar[r]  &A'\ar[r]  &\mc{Q}'/Y'\ar[r] &0
}
\end{equation}
\begin{equation}\label{eq1.11}
\xymatrix{
0\ar[r] &G\ar[r]\ar[d]^{f_{\mr{c}}} &G^{(Y)}_{\mr{log}}\ar[r]\ar[d]^{\tilde{f}} &\mc{Q}\ar[r]\ar[d]^{\tilde{f}_{\mr{d}}} &0 \\
0\ar[r] &G'\ar[r]  &G'^{(Y')}_{\mr{log}}\ar[r]  &\mc{Q}'\ar[r] &0
}
\end{equation}
\begin{equation}\label{eq1.12}
\xymatrix{
0\ar[r] &Y\ar[r]\ar[d]^{f_{-1}} &\mc{Q}\ar[r]\ar[d]^{\tilde{f}_{\mr{d}}} &\mc{Q}/Y\ar[r]\ar[d]^{f_{\mr{d}}} &0 \\
0\ar[r] &Y'\ar[r]  &\mc{Q}'\ar[r]  &\mc{Q}'/Y'\ar[r] &0
}
\end{equation}
with exact rows.
\end{prop}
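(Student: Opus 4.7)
The plan is to extract all vertical maps in (\ref{eq1.9})--(\ref{eq1.12}) from the unique morphism of log 1-motives corresponding to $f$, and then to verify commutativity by direct inspection. The four rows are already at our disposal: for (\ref{eq1.9}) I would use the sequence (\ref{eq1.3}) of Theorem \ref{thm1.1}(2) applied to $A$ and to $A'$; for (\ref{eq1.10}) the sequence (\ref{eq1.4}) of Theorem \ref{thm1.1}(3); for (\ref{eq1.11}) the sequence (\ref{eq1.2}) of Proposition \ref{prop1.4}(2); and for (\ref{eq1.12}) the tautological sequence obtained from $u:Y\to G_{\mr{log}}^{(Y)}$ composed with the projection to $\mc{Q}$, as in Remark \ref{rmk1.2}.

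First I would invoke Theorem \ref{thm1.5} to produce from $f$ a unique morphism of log 1-motives $(f_{-1},f_0):M\to M'$; this immediately supplies $f_{-1}:Y\to Y'$, which is the left column in both (\ref{eq1.9}) and (\ref{eq1.12}). The proof of Theorem \ref{thm1.5} (inherited from \cite[Thm. 8.1]{k-k-n2}) together with Proposition \ref{prop1.3}(2) then shows that $f_0:G_{\mr{log}}\to G'_{\mr{log}}$ is the image, under the functor $G\mapsto G_{\mr{log}}$, of a unique homomorphism of semi-abelian schemes $f_{\mr{c}}:G\to G'$, supplying the left column in (\ref{eq1.10}) and (\ref{eq1.11}).

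Next I would define $\tilde f:G_{\mr{log}}^{(Y)}\to G'^{(Y')}_{\mr{log}}$ by applying the functoriality statement of Proposition \ref{prop1.4}(3) to $(f_{-1},f_0)$; this gives the middle column of (\ref{eq1.9}) and (\ref{eq1.11}). Passing to the cokernels of the left column in (\ref{eq1.11}) produces $\tilde f_{\mr{d}}:\mc{Q}\to\mc{Q}'$, and this is simultaneously the middle column of (\ref{eq1.12}). The map $f_{\mr{d}}:\mc{Q}/Y\to\mc{Q}'/Y'$ is then defined as the quotient of $\tilde f_{\mr{d}}$ by $f_{-1}$ using (\ref{eq1.12}), and $f:A\to A'$ is identified with the quotient of $\tilde f$ by $f_{-1}$ as in the very construction of $A$ from $M$ in Theorem \ref{thm1.5}. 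With these functorial definitions in place, the commutativity of (\ref{eq1.9}), (\ref{eq1.11}) and (\ref{eq1.12}) is automatic.

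The only step requiring a separate remark is diagram (\ref{eq1.10}). Its left square commutes because $f_{\mr{c}}$ was extracted precisely so that $G\hookrightarrow A\xrightarrow{f}A'$ coincides with $G\xrightarrow{f_{\mr{c}}}G'\hookrightarrow A'$, by the construction underlying Theorem \ref{thm1.5}. The right square then comes from passing to the cokernels of the left column; for this to yield a well-defined $f_{\mr{d}}$ one needs the composition $G\hookrightarrow A\xrightarrow{f} A'\to\mc{Q}'/Y'$ to be zero, which reduces, after an \'etale-local lift through $\mc{Q}'\to\mc{Q}'/Y'$, to Lemma \ref{lem1.3}(2) applied through the inclusion $\mc{Q}'\subset\mc{H}om_{S^{\mr{log}}_{\mr{fl}}}(X',\Gml/\Gm)$. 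I do not expect any genuine obstacle; the proposition is a bookkeeping statement, and the hard work has already been carried out in Theorems \ref{thm1.1} and \ref{thm1.5} and Propositions \ref{prop1.3} and \ref{prop1.4}.
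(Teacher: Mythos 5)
Your proposal matches the paper's own (largely implicit) proof essentially step for step: the paper likewise obtains $(f_{-1},f_0)$ from Theorem \ref{thm1.5}, extracts $f_{\mr{c}}$ from the proof of \cite[Thm.\ 8.1]{k-k-n2} (equivalently from diagram (\ref{eq1.10}) together with the vanishing $\mr{Hom}_{S^{\mr{log}}_{\mr{fl}}}(G,\mc{Q}'/Y')=0$), gets $\tilde f$ from the functoriality of $G^{(Y)}_{\mr{log}}$ as in Proposition \ref{prop1.4}(3), and produces $\tilde f_{\mr{d}}$ and $f_{\mr{d}}$ by passing to cokernels. The one loose point is your justification of $\mr{Hom}_{S^{\mr{log}}_{\mr{fl}}}(G,\mc{Q}'/Y')=0$ by an ``\'etale-local lift'' through $\mc{Q}'\to\mc{Q}'/Y'$ (a homomorphism into the quotient need not lift locally to a homomorphism); the paper simply quotes this vanishing from \cite[9.2]{k-k-n2}, and your appeal to Lemma \ref{lem1.3}(2), which exploits the connectedness of the fibres of $G$, is the right underlying mechanism.
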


\section{Isogeny}\label{sec3}

In this section we study log abelian varieties over a log point. Note that in this case, log abelian varieties are necessarily log abelian varieties with constant degeneration by \cite[Thm. 4.6 (2)]{k-k-n2}.

Let $k$ be a field, and $S=(\mr{Spec}\,k,M_S)$ an fs log point with log structure induced by a chart $P\rightarrow k$, where $P$ is a sharp fs monoid such that $P\rightarrow (M_S/\mc{O}_S^{\times})_{\bar{x}}$ is an isomorphism. Here $x$ denotes the underlying point of $S$ and $\bar{x}$ denotes a geometric point above $x$. Let $(\mr{fs}/S)$ be the category of fs log schemes over $S$, and log schemes in this section will always be fs log schemes unless otherwise stated. Let $S^{\mr{log}}_{\mr{fl}}$ (resp. $S^{\mr{cl}}_{\mr{fl}}$) be the log flat (resp. classical flat) site on $(\mr{fs}/S)$, and let $S^{\mr{log}}_{\mr{\acute{E}t}}$ (resp. $S^{\mr{cl}}_{\mr{\acute{E}t}}$) be the log \'etale (resp. classical \'etale) site on $(\mr{fs}/S)$.

\subsection{Isogeny}
Firstly we show a few properties of the category $(\mr{fin}/S)_r$. See Definition \ref{defn-app.1} in the Appendix for the categories $(\mr{fin}/S)_f$, $(\mr{fin}/S)_r$, $(\mr{fin}/S)_d$ and $(\mr{fin}/S)_c$.

\begin{lem}\label{lem2.1}
Let $F_1\in (\mr{fin}/S)_c$ and let $F_2$ be a subobject of $F_1$ in $(\mr{fin}/S)_r$. Then we have $F_2\in (\mr{fin}/S)_c$. In other words, the category $(\mr{fin}/S)_c$ is closed under subobjects in $(\mr{fin}/S)_r$.
\end{lem}
\begin{proof}
If the field $k$ is of characteristic zero, then $F_2$ is a log flat locally constant sheaf of finite abelian groups. Hence it corresponds to a $\pi_1^{\mr{log}}(S)$-module by Theorem \ref{thm-app.2}. Here $\pi_1^{\mr{log}}(S)$ denotes the logarithmic fundamental group of $S$ with respect to some log geometric point of $S$, see \cite[\S 4]{ill1} for its definition. We have a short exact sequence
$$1\rightarrow I^{\mr{log}}(S)\rightarrow \pi_1^{\mr{log}}(S)\xrightarrow{\mr{forg}} \pi_1(S)\rightarrow 1$$
from \cite[(4.7.1)]{ill1}, where forg is the map to the classical \'etale fundamental group of $S$ induced by the canonical map from the log \'etale site to the classical \'etale site, and the kernel $I^{\mr{log}}(S)$ of forg is called the log inertia group of $S$. Since $F_1\in (\mr{fin}/S)_c$, the action of $I^{\mr{log}}(S)$ on $F_1$ is trivial. It follows that the action of $I^{\mr{log}}(S)$ on the subobject $F_2$ of $F_1$ is also trivial. Hence we get $F_2\in (\mr{fin}/S)_c$.

Now we are left with the case that the field $k$ is of positive characteristic. Let $0\rightarrow F_i^{\circ}\rightarrow F_i\rightarrow F_i^{\mr{et}}\rightarrow 0$ be the connected-\'etale short exact sequence of $F_i$, see Lemma \ref{lem-app.1}. It is obvious that we have $F_1^{\circ},F_1^{\mr{et}}\in (\mr{fin}/S)_c$. We have $F_2^{\mr{et}}\in (\mr{fin}/S)_r$ and $F_2^{\circ}\in (\mr{fin}/S)_c$ by Proposition \ref{prop-app.2}. The inclusion $i:F_2\hookrightarrow F_1$ gives a commutative diagram
$$\xymatrix{
0\ar[r] &F_2^{\circ}\ar[r]\ar@{^{(}->}[d]^{i^{\circ}} &F_2\ar[r]\ar@{^{(}->}[d]^i &F_2^{\mr{et}}\ar[r]\ar@{^{(}->}[d]^{i^{\mr{et}}} &0 \\
0\ar[r] &F_1^{\circ}\ar[r]  &F_1\ar[r]  &F_1^{\mr{et}} \ar[r] &0
}$$
with exact rows and injective vertical homomorphisms. Then we have $F_2^{\mr{et}}\in (\mr{fin}/S)_c$ by applying the same argument as in the characteristic zero case to $F_2^{\mr{et}}\subset F_1^{\mr{et}}$. Let $E$ be the pullback of the extension $F_1$ along $i^{\mr{et}}:F_2^{\mr{et}}\hookrightarrow F_1^{\mr{et}}$, then we have a commutative diagram
$$\xymatrix{
0\ar[r] &F_2^{\circ}\ar[r]\ar@{^{(}->}[d]^{i^{\circ}} &F_2\ar[r]\ar@{^{(}->}[d] &F_2^{\mr{et}}\ar[r]\ar@{=}[d] &0 \\
0\ar[r] &F_1^{\circ}\ar[r]\ar@{=}[d] &E\ar[r]\ar@{^{(}->}[d] &F_2^{\mr{et}}\ar[r]\ar@{^{(}->}[d]^{i^{\mr{et}}} &0 \\
0\ar[r] &F_1^{\circ}\ar[r]  &F_1\ar[r]  &F_1^{\mr{et}} \ar[r] &0
}$$
with exact rows. Note that $E$ lies in $(\mr{fin}/S)_c$, and $E$ is also the pushout of $F_2$ along $i^{\circ}:F_2^{\circ}\hookrightarrow F_1^{\circ}$. Now we make use of Kato's 
classification theorem Theorem \ref{thm-app.3}. Note that the functor $\Phi$ of Theorem \ref{thm-app.3} is compatible with pushout along the second argument, hence we have a commutative diagram
$$\renewcommand{\arraystretch}{1.3}
\begin{array}[c]{ccccc}
\mathfrak{Ext}_{S^{\mr{cl}}_{\mr{fl}}}(F_2^{\mr{et}},F_2^{\circ})&\times &\mathfrak{Hom}_{S^{\mr{cl}}_{\mr{fl}}}(F_2^{\mr{et}}(1),F_2^{\circ})\otimes P^{\mr{gp}} &\stackrel{\simeq}\rightarrow &\mathfrak{Ext}_{S^{\mr{log}}_{\mr{fl}}}(F_2^{\mr{et}},F_2^{\circ})\\

\downarrow\scriptstyle{i^{\circ}_*}&&\downarrow\scriptstyle{i^{\circ}_*}&&\downarrow\scriptstyle{i^{\circ}_*}\\

\mathfrak{Ext}_{S^{\mr{cl}}_{\mr{fl}}}(F_2^{\mr{et}},F_1^{\circ})&\times&\mathfrak{Hom}_{S^{\mr{cl}}_{\mr{fl}}}(F_2^{\mr{et}}(1),F_1^{\circ})\otimes P^{\mr{gp}}&\stackrel{\simeq}\rightarrow&\mathfrak{Ext}_{S^{\mr{log}}_{\mr{fl}}}(F_2^{\mr{et}},F_1^{\circ}) 
\end{array}$$
with rows equivalences of categories. Let $[E]\in\mathfrak{Ext}_{S^{\mr{log}}_{\mr{fl}}}(F_2^{\mr{et}},F_1^{\circ})$ (resp. $[F_2]\in \mathfrak{Ext}_{S^{\mr{log}}_{\mr{fl}}}(F_2^{\mr{et}},F_2^{\circ})$) denote the class represented by $E$ (resp. $F_2$). With the help of the above commutative diagram, $[E]\in\mathfrak{Ext}_{S^{\mr{cl}}_{\mr{fl}}}(F_2^{\mr{et}},F_1^{\circ})$ implies $[F_2]\in \mathfrak{Ext}_{S^{\mr{cl}}_{\mr{fl}}}(F_2^{\mr{et}},F_2^{\circ})$.
\end{proof}

\begin{prop}\label{prop2.1}
\begin{enumerate}[(1)]
\item The category $(\mr{fin}/S)_f$ is abelian.
\item The category $(\mr{fin}/S)_r$ is a weak Serre subcategory of $(\mr{fin}/S)_f$.
\item The category $(\mr{fin}/S)_c$ is closed under subobjects and quotient objects in $(\mr{fin}/S)_r$, but not closed under extensions in $(\mr{fin}/S)_r$. 
\end{enumerate}
\end{prop}
\begin{proof}
We first show part (1). Let $f:F\rightarrow F'$ be in $(\mr{fin}/S)_f$, and $U\rightarrow S$ a log flat cover such that both $F_U$ and $F'_U$ lie in $(\mr{fin}/U)_c$. Since $S$ is a log point, we may shrink $U$ such that its underlying scheme is affine (in particular quasi-compact). For each positive integer $n$, let $S_n:=S\times_{\mr{Spec}\Z[P]}\mr{Spec}\Z[P^{1/n}]$ endowed with the log structure associated to $P^{1/n}\rightarrow \mc{O}_{S_n}$.  By \cite[Prop. 2.7 (2)]{kat2} or \cite[Cor. 2.16]{niz1}\footnote{The proof of \cite[Prop. 2.7]{kat2} hasn't been given in the very preprint. One may refer to \cite[Cor. 2.16]{niz1} for the proof, however the statement might have missed the quasi-compact assumption. Nevertheless the proof works under the quasi-compact assumption.}, there exists a log flat cover $V\rightarrow U$ and some positive integer $n_0$ such that $W:=V\times_{S}S_{n_0}\rightarrow S_{n_0}$ is classically flat. Since both $F_W$ and $F'_W$ are represented by classical finite flat group schemes, so are $F_{S_{n_0}}$ and $F'_{S_{n_0}}$ by classical flat descent theory. Note that the underlying scheme of $S_{n_0}$ is artinian. Since the category of commutative finite flat group schemes over an artinian base is abelian, part (1) follows.

To show part (2), we need to check that $(\mr{fin}/S)_r$ is closed under kernels, cokernels and extensions. The closedness under kernels is trivial. The closedness under extensions is given by Proposition \ref{prop-app.1}. We are left to show the closedness under cokernels. It suffices to show that, for any short exact sequence $0\rightarrow F_1\rightarrow F_2\rightarrow F_3\rightarrow 0$ with $F_1,F_2\in (\mr{fin}/S)_r$, we must have $F_3\in (\mr{fin}/S)_r$. Let $0\rightarrow F_i^{\circ}\rightarrow F_i\rightarrow F_i^{\mr{et}}\rightarrow 0$ be the connected-\'etale short exact sequence of $F_i$, then we have the following commutative diagram
$$\xymatrix{
&0\ar[d]&0\ar[d]&0\ar[d]   \\
0\ar[r] &F_1^{\circ}\ar[r]\ar[d] &F_1\ar[r]\ar[d] &F_1^{\mr{et}}\ar[r]\ar[d] &0  \\
0\ar[r] &F_2^{\circ}\ar[r]\ar[d] &F_2\ar[r]\ar[d] &F_2^{\mr{et}}\ar[r]\ar[d] &0  \\
0\ar[r] &F_3^{\circ}\ar[r]\ar[d] &F_3\ar[r]\ar[d] &F_3^{\mr{et}}\ar[r]\ar[d] &0  \\
&0&0&0
}$$
with exact rows and columns. Both $F_1^{\circ}$ and $F_2^{\circ}$ lie in $(\mr{fin}/S)_c$, so is $F_3^{\circ}$. Hence $F_3\in (\mr{fin}/S)_r$ by Proposition \ref{prop-app.2}.

Now we show part (3). The closedness under subobjects is just Lemma \ref{lem2.1}. The closedness under quotient objects can be proven by a similar argument as in the proof of Lemma \ref{lem2.1}. The non-closedness is clear by Kato's classification theorem Theorem \ref{thm-app.3}.
\end{proof}

\begin{lem}\label{lem2.2}
Let $M=[Y\rightarrow G_{\mr{log}}],M'=[Y'\rightarrow G'_{\mr{log}}]$ be two non-degenerate log 1-motives over $S$, $(f_{-1},f_0):M\rightarrow M'$ a homomorphism of log 1-motives, and $f_{\mr{c}}:G\rightarrow G'$ the map induced by $f_0$. Let $X$ (resp. $X'$) be the character group of the torus part $T$ (resp. $T'$) of $G$ (resp. $G'$), $\mc{Q}$ (resp. $\mc{Q}'$) the sheaf $\mc{H}om_{S^{\mr{log}}_{\mr{fl}}}(X,\Gml/\Gm)^{(Y)}$ (resp. $\mc{H}om_{S^{\mr{log}}_{\mr{fl}}}(X',\Gml/\Gm)^{(Y')}$), $f_{\mr{l}}:X'\rightarrow X$ the map induced by $f_{\mr{c}}$, and $\tilde{f}_{\mr{d}}:\mc{Q}\rightarrow \mc{Q}'$ the map induced by $f_{\mr{l}}$. If $f_{\mr{c}}$ is an isogeny, then the map $\tilde{f}:G_{\mr{log}}^{(Y)}\rightarrow G'^{(Y')}_{\mr{log}}$ induced by $(f_{-1},f_0)$ is surjective with kernel $\mr{Ker}(f_{\mr{c}})$, and the map $\tilde{f}_{\mr{d}}$ is bijective.
\end{lem}
\begin{proof}
Since $f_{\mr{c}}$ is an isogeny, the map $f_{\mr{l}}$ is injective and of finite cokernel. We consider the following commutative diagram
$$\xymatrix{
0\ar[r] &G\ar[r]\ar[d]^{f_{\mr{c}}} &G_{\mr{log}}^{(Y)}\ar[r]\ar[d]^{\tilde{f}} &\mc{Q}\ar[r]\ar[d]^{\tilde{f}_{\mr{d}}} &0 \\
0\ar[r] &G'\ar[r]  &G'^{(Y')}_{\mr{log}}\ar[r]  &\mc{Q}'\ar[r] &0
}$$
with exact rows. To show the surjectivity of $\tilde{f}$, it is enough to show the surjectivity of $\tilde{f}_{\mr{d}}$. 

The induced map $T_{\mr{log}}\rightarrow T'_{\mr{log}}$ is surjective by Proposition \ref{prop1.3} (5). Furthermore we have the surjectivity of the map 
$$\mc{H}om_{S^{\mr{log}}_{\mr{fl}}}(X,\Gml/\Gm)\rightarrow \mc{H}om_{S^{\mr{log}}_{\mr{fl}}}(X',\Gml/\Gm).$$
Thus for any $\varphi'\in \mc{Q}'$, there exists some $\varphi\in \mc{H}om_{S^{\mr{log}}_{\mr{fl}}}(X,\Gml/\Gm)$ mapped to $\varphi'$, i.e. $\varphi'=\varphi\circ f_{\mr{l}}$. In order to show the surjectivity of $\tilde{f}_{\mr{d}}$, it suffices to show that $\varphi\in \mc{Q}$. Let $n$ be a positive integer killing $X/f_{\mr{l}}(X')$, and let $<,>:X\times Y\rightarrow\Gml/\Gm$ and $<,>':X'\times Y'\rightarrow \Gml/\Gm$ be the pairings associated to $M$ and $M'$ respectively. Given any $U\in (\mr{fs}/S), u\in U, x\in X_{\bar{u}}$, there exists $x'\in X'_{\bar{u}}$ such that $nx=f_{\mr{l}}(x')$. By the definition of $\mc{Q}'$, there exist   $y'_{u,x',1},y'_{u,x',2}\in Y'_{\bar{u}}$ such that 
\begin{equation}\label{eq1lem2.2}
<x',y'_{u,x',1}>'|\,\varphi'(x')\,|<x',y'_{u,x',2}>'.
\end{equation}
The map $f_{\mr{l}}$ being injective with finite cokernel, together with the non-degeneracy of $M$ and $M'$, forces $f_{-1}$ to be injective with finite cokernel. If necessary we enlarge $n$ such that it also kills the cokernel of $f_{-1}$. Then there exist $y_{u,x',1},y_{u,x',2}\in Y_{\bar{u}}$ such that $ny'_{u,x',1}=f_{-1}(y_{u,x',1})$ and $ny'_{u,x',2}=f_{-1}(y_{u,x',2})$. Raising the relation (\ref{eq1lem2.2}) to $n$-th power, we get a new relation 
\begin{equation}\label{eq2lem2.2}
<x',f_{-1}(y_{u,x',1})>'|\,\varphi'(x')^n\,|<x',f_{-1}(y_{u,x',2})>'.
\end{equation}
Since $<f_{\mr{l}}(-),->=<-,f_{-1}(-)>'$, the relation (\ref{eq2lem2.2}) can be rewritten as 
\begin{equation}\label{eq3lem2.2}
<x,y_{u,x',1}>^n|\,\varphi(x)^{n^2}\,|<x,y_{u,x',2}>^n.
\end{equation}
By \cite[18.10]{k-k-n4}, there exist $y_1, y_2\in Y_{\bar{u}}$ such that $$<x,y_1^n>|<x,y_{u,x',1}> \, \text{and}\, <x,y_{u,x',2}>|<x,y_2^n>.$$ Therefore relation (\ref{eq3lem2.2}) gives another relation
\begin{equation}\label{eq4lem2.2}
<x,y_1>^{n^2}|\,\varphi(x)^{n^2}\,|<x,y_2>^{n^2}.
\end{equation}
Removing the exponents from (\ref{eq4lem2.2}), we get $<x,y_1>|\,\varphi(x)\,|<x,y_2>$, hence $\varphi\in\mc{Q}$.

The injectivity of $\tilde{f}_{\mr{d}}$ follows from the injectivity of $$\mc{H}om_{S^{\mr{log}}_{\mr{fl}}}(X,\Gml/\Gm)\rightarrow \mc{H}om_{S^{\mr{log}}_{\mr{fl}}}(X',\Gml/\Gm).$$
The identification $\mr{Ker}(f_{\mr{c}})=\mr{Ker}(\tilde{f})$ follows from the injectivity of $\tilde{f}_{\mr{d}}$.
\end{proof}

\begin{prop}\label{prop2.2}
Let $A$ be a log abelian variety over $S$, and $F\in (\mr{fin}/S)_r$ be a subsheaf of $A$. Then:
\begin{enumerate}[(1)]
\item $F$ is an extension of objects of $(\mr{fin}/S)_c$. 
\item $F\in (\mr{fin}/S)_d$.
\item The quotient $A/F$ is also a log abelian variety over $S$.
\end{enumerate}
\end{prop}
\begin{proof}
Let $n$ be a positive integer such that $nF=0$, then we have $F\subset A[n]:=\mr{Ker}(A\xrightarrow{\times n} A)$. Since the map $G\xrightarrow{n_G}G$ is an isogeny, we get a short exact sequence $0\rightarrow G[n]\rightarrow A[n]\rightarrow (\mc{Q}/Y)[n]\rightarrow 0$ by diagram (\ref{eq1.10}), where $\mc{Q}$ denotes the sheaf $\mc{H}om(X,\Gml/\Gm)^{(Y)}$. Since the map $\mc{Q}\xrightarrow{\times n}\mc{Q}$ is an isomorphism, we get $(\mc{Q}/Y)[n]\cong Y/nY$ by diagram (\ref{eq1.12}). We also have $A[n]\in (\mr{fin}/S)_r$ by \cite[Prop. 18.1 (1) and (2)]{k-k-n4}. Now let $F'$ be the kernel of the composition $F\hookrightarrow A[n]\rightarrow (\mc{Q}/Y)[n]\cong Y/nY$, and $F''$ the image of $F$ in $Y/nY$, then we have a commutative diagram
$$\xymatrix{
0\ar[r] &F'\ar[r]\ar@{^{(}->}[d] &F\ar[r]\ar@{^{(}->}[d] &F''\ar[r]\ar@{^{(}->}[d] &0 \\
0\ar[r] &G[n]\ar[r]  &A[n]\ar[r]  &Y/nY \ar[r] &0
}$$
with exact rows and injective vertical homomorphisms. As a kernel of a homomorphism between two representable objects, $F'\in (\mr{fin}/S)_r$; as a subobject of $Y/nY$ which is a classical finite \'etale group scheme, $F''\in (\mr{fin}/S)_c$. Applying Lemma \ref{lem2.1} to the inclusion $F'\subset G[n]$, we conclude $F'\in (\mr{fin}/S)_c$, hence part (1) is proven. Part (2) follows from part (1) and Proposition \ref{prop-app.1}. 

Now we show part (3). It suffices to find a polarisable log 1-motive such that $A/F$ is isomorphic to its associated quotient. Consider the pullback $E$ of $G_{\mr{log}}^{(Y)}$ along $F\subset A$
$$\xymatrix{
0\ar[r] &Y\ar[r]\ar@{=}[d] &E\ar[r]\ar@{^{(}->}[d] &F\ar[r]\ar@{^{(}->}[d] &0 \\
0\ar[r] &Y\ar[r]  &G_{\mr{log}}^{(Y)}\ar[r]  &A \ar[r] &0,
}$$
and let $E_{\mr{tor}}$ be the torsion subsheaf of $E$, $Y':=E/E_{\mr{tor}}$. Since the sheaf $$G_{\mr{log}}^{(Y)}/G=\mc{H}om_{S^{\mr{log}}_{\mr{fl}}}(X,\Gml/\Gm)^{(Y)}$$
is torsion-free, $E_{\mr{tor}}$ maps into $G$. So we get $E_{\mr{tor}}=F'$ and $Y'/Y=F''\subset Y/nY$, and $Y'$ is \'etale locally constant. Let $G'=G/E_{\mr{tor}}=G/F'$, the inclusion $E\hookrightarrow G_{\mr{log}}^{(Y)}\hookrightarrow G_{\mr{log}}$ gives a homomorphism $Y'\rightarrow G_{\mr{log}}/F'=G'_{\mr{log}}$ by taking the quotient by $F'$. In this way, we get a log 1-motive $M':=[Y'\xrightarrow{u'} G'_{\mr{log}}]$ together with a homomorphism $(f_{-1},f_0):M:=[Y\xrightarrow{u} G_{\mr{log}}]\rightarrow M'$. By the construction of the homomorphism $(f_{-1},f_0)$, it is clear that the multiplication by $n$ map $n_M$ on $M$ factors through $(f_{-1},f_0)$. Let $(g_{-1},g_0):M'\rightarrow M$ be the homomorphism such that $n_M=(g_{-1},g_0)\circ (f_{-1},f_0)$, let $(h_{-1},h_0):M\rightarrow M^*=[X\rightarrow G^*_{\mr{log}}]$ be a polarisation of $M$, then $(g_{-1}^*\circ h_{-1}\circ g_{-1},g_0^*\circ h_0\circ g_0)$ gives rise to a polarisation of $M'$, where $(g_{-1}^*,g_0^*):M^*\rightarrow M'^*=[X'\rightarrow G'^*_{\mr{log}}]$ is the dual of $(g_{-1},g_0)$. By \cite[Thm. 3.4]{k-k-n2}, the homomorphism $(f_{-1},f_0)$ gives rise to a homomorphism $f:A\rightarrow A'$ of log abelian varieties with constant degeneration, where $A'$ is the associated log abelian variety of $M'$. By the diagram (\ref{eq1.9}) and Lemma \ref{lem2.2}, it is easy to see that $A'=A/F$.
\end{proof}

\begin{defn}\label{defn2.2}
Let $A,A'$ be two log abelian varieties over $S$. An isogeny from $A$ to $A'$ is a homomorphism $f$ from $A$ to $A'$ such that $f$ is surjective for the log flat topology and $\mr{Ker}(f)\in (\mr{fin}/S)_r$.
\end{defn}

\begin{rmk}\label{rmk2.1}
Let $f:A\rightarrow A'$ be an isogeny between two log abelian varieties over $S$. By Proposition \ref{prop2.2} (2), we have $\mr{Ker}(f)\in (\mr{fin}/S)_d$. Hence we can replace the condition $\mr{Ker}(f)\in (\mr{fin}/S)_r$ by the a priori stronger condition $\mr{Ker}(f)\in (\mr{fin}/S)_d$ in the definition of isogeny. We can also replace the condition $\mr{Ker}(f)\in (\mr{fin}/S)_r$ by the a priori weaker condition $\mr{Ker}(f)\in (\mr{fin}/S)_f$, since $\mr{Ker}(f)$ is automatically representable as the kernel of $f|_{A[n]}:A[n]\rightarrow A'[n]$ for $n$ big enough. Here we have used the fact $A[n],A'[n]\in (\mr{fin}/S)_r$, see \cite[Prop. 18.1]{k-k-n4}.
\end{rmk}

\begin{ex}\label{ex2.1}
By Proposition \ref{prop2.2}, a subsheaf $F\in (\mr{fin}/S)_r$ of a log abelian variety $A$ over $S$ gives an isogeny $A\rightarrow A/F$ of log abelian varieties.
\end{ex}

Isogenies between abelian varieties can be defined by several equivalent conditions, some of which concern the dimension. Here we show the same thing happens for log abelian varieties over $S$. 

Recall that the dimension of a log abelian variety is defined to be the dimension of its semi-abelian part, see \cite[4.4]{k-k-n2}. 

\begin{prop}\label{prop2.3}
Let $f:A\rightarrow A'$ be a homomorphism of log abelian varieties over $S$. Let $M=[Y\xrightarrow{u} G_{\mr{log}}]$ (resp. $M'=[Y'\xrightarrow{u'} G'_{\mr{log}}]$) be the log 1-motive defining $A$ (resp. $A'$), and $f_{-1}$ and $f_{\mr{c}}$ the homomorphisms induced by $f$ as in Proposition \ref{prop1.6}. Consider the following conditions:
\begin{enumerate}[(1)]
\item $f$ is an isogeny;
\item $f$ is surjective for the log flat topology and $\mr{dim}A=\mr{dim}A'$;
\item $\mr{Ker}(f)\in (\mr{fin}/S)_r$ and $\mr{dim}A=\mr{dim}A'$;
\item $f_{\mr{c}}$ is an isogeny and $f_{-1}$ is injective of finite cokernel.
\end{enumerate}
Then we have $(2)\Leftarrow (1)\Leftrightarrow (3)\Leftrightarrow (4)$.
\end{prop}
\begin{proof}
If $f$ is an isogeny, we have $\mr{Ker}(f)\in (\mr{fin}/S)_r$ and $A'=A/\mr{Ker}(f)$. By the construction of $A/\mr{Ker}(f)$ as a log abelian variety in the proof of part (3) of Proposition \ref{prop2.2}, we have that $f_{\mr{c}}$ is an isogeny of semi-abelian varieties, hence $\mr{dim}G=\mr{dim}G'$. This shows that (1) implies both (2) and (3).

Now we show that (3) implies (1). The condition $\mr{Ker}(f)\in (\mr{fin}/S)_r$ implies that $\mr{Ker}(f_{\mr{c}})$ is a finite group scheme. Since $\mr{dim}G=\mr{dim}A=\mr{dim}A'=\mr{dim}G'$, $f_{\mr{c}}$ is an isogeny. Let $\tilde{f}:G_{\mr{log}}^{(Y)}\rightarrow G'^{(Y')}_{\mr{log}}$ be the homomorphism induced by $f$ as in Proposition \ref{prop1.6}. By Lemma \ref{lem2.2}, $\tilde{f}$ is surjective. Then the surjectivity of $f$ follows from the surjectivity of $\tilde{f}$. Hence $f$ is an isogeny.

At last, we show the equivalence between (3) and (4). Assuming (3), we must have that $f_{\mr{c}}$ is an isogeny. Let $\tilde{f}_{\mr{d}}:\mc{Q}\rightarrow\mc{Q}'$ be the homomorphism induced by $f$ as in Proposition \ref{prop1.6}. Then $\tilde{f}_{\mr{d}}$ is bijective by Lemma \ref{lem2.2}. Applying snake lemma to diagrams (\ref{eq1.10}) and (\ref{eq1.12}), we get $\mr{Ker}(f_{-1})=0$ and a short exact sequence $0\rightarrow \mr{Ker}(f_{\mr{c}})\rightarrow \mr{Ker}(f)\rightarrow \mr{Coker}(f_{-1})\rightarrow 0$. Hence (3) implies (4). Conversely, assuming (4), we have that $\tilde{f}_{\mr{d}}$ is bijective by Lemma \ref{lem2.2}. Again applying snake lemma to diagrams (\ref{eq1.10}) and (\ref{eq1.12}), we have a short exact sequence $0\rightarrow \mr{Ker}(f_{\mr{c}})\rightarrow \mr{Ker}(f)\rightarrow \mr{Coker}(f_{-1})\rightarrow 0$. Hence $\mr{Ker}(f)\in (\mr{fin}/S)_r$ by part (2) of Proposition \ref{prop2.1}, so we get (3).
\end{proof}

\begin{rmk}\label{rmk2.2}
One might wonder if (2) implies (1) in Proposition \ref{prop2.3}. Note that the corresponding statement for abelian varieties holds. It is easy to see that the implication follows from the surjectivity of $f_{\mr{c}}$. Unfortunately, it is not clear to the author how to deduce the surjectivity of $f_{\mr{c}}$ from the surjectivity of $f$.
\end{rmk}

\begin{ex}\label{ex2.2}
Let $A$ be a log abelian variety over $S$. Let $M=[Y\rightarrow G_{\mr{log}}]$ be the log 1-motive defining $A$, $M^*=[X\rightarrow G^*_{\mr{log}}]$ the dual of $M$ and $(\lambda_{-1},\lambda_0):M\rightarrow M^*$ a polarisation, see \cite[Def. 2.8]{k-k-n2} for the definition of polarisation. Then the map $\lambda:A\rightarrow A^*$ induced by $(\lambda_{-1},\lambda_0)$ is an isogeny. One calls $\lambda$ a polarisation of the log abelian variety $A$.
\end{ex}

\begin{prop}\label{prop2.4}
Let $A$ be a log abelian variety over $S$, let $g$ be the dimension of $A$, and let $n$ be a positive integer.
\begin{enumerate}[(1)]
\item The multiplication-by-$n$ map $n_A:A\rightarrow A$ is an isogeny.
\item The rank of $A[n]:=\mr{Ker}(n_A)$ is $n^{2g}$.
\item $A[n]\in (\mr{fin}/S)_d$.
\item Let $(n_A)^*$ be the dual of the map $n_A$, then $(n_A)^*=n_{A^*}$.
\item If $n$ is coprime to the characteristic of $k$, then Kummer \'etale locally on $S$, $A[n]$ is isomorphic to $(\Z/n\Z)^{2g}$.
\end{enumerate}
\end{prop}
\begin{proof}
By \cite[Prop. 18.1]{k-k-n4}, we have $A[n]\in (\mr{fin}/S)_r$, hence part (1) is a corollary of Proposition \ref{prop2.3}. For part (2) and part (5), we refer to \cite[Prop. 18.1]{k-k-n4}. Part (3) follows from part (1) and Remark \ref{rmk2.1}. We are left with part (4). Let $M=[Y\rightarrow G_{\mr{log}}]$ be the log 1-motive defining $A$, then $n_A$ is the map induced by the map $n_M$. Since the dual of $n_M$ is the map $n_{M^*}$, where $M^*$ denote the dual of $M$, the dual of $n_A$ is nothing but $n_{A^*}$.
\end{proof}

\subsection{The dual short exact sequence}
Recall that for an isogeny $f:A\rightarrow A'$ between two abelian varieties over a field, we have that the dual $f^*$ of $f$ is an isogeny with kernel $(\mr{Ker}(f))^*$. In this subsection we show that the same thing holds for log abelian varieties over $S$.

Let $f:A\rightarrow A'$ be an isogeny between two log abelian varieties over $S$, and let $F$ be the kernel of $f$, then we get a short exact sequence $0\rightarrow F\rightarrow A\xrightarrow{f} A'\rightarrow 0$. Applying the functor $\mc{H}om_{S^{\mr{log}}_{\mr{fl}}}(-,\Gml)$ to this short exact sequence, we get a long exact sequence
\begin{equation}\label{eq2.1}
\begin{split}
\rightarrow &\mc{H}om_{S^{\mr{log}}_{\mr{fl}}}(A,\Gml) \rightarrow \mc{H}om_{S^{\mr{log}}_{\mr{fl}}}(F,\Gml)  \rightarrow \mc{E}xt_{S^{\mr{log}}_{\mr{fl}}}(A',\Gml)  \\
\rightarrow &\mc{E}xt_{S^{\mr{log}}_{\mr{fl}}}(A,\Gml).
\end{split}
\end{equation}
By Theorem \ref{thm1.2} (4), $\mc{H}om_{S^{\mr{log}}_{\mr{fl}}}(A,\Gml)=0$. By Theorem \ref{thm1.2} (3), the map $\mc{E}xt_{S^{\mr{log}}_{\mr{fl}}}(A',\Gml)\rightarrow \mc{E}xt_{S^{\mr{log}}_{\mr{fl}}}(A,\Gml)$ is just the map $G'^*_{\mr{log}}/X'\rightarrow G^*_{\mr{log}}/X$. The torsion-free nature of $\Gml/\Gm$ implies $\mc{H}om_{S^{\mr{log}}_{\mr{fl}}}(F,\Gml/\Gm)=0$, hence we have $\mc{H}om_{S^{\mr{log}}_{\mr{fl}}}(F,\Gml)=F^*$, where $F^*=\mc{H}om_{S^{\mr{log}}_{\mr{fl}}}(F,\Gm)$ is the Cartier dual of $F$ (see Definition \ref{defn-app.1}). We have $A^*=G^{*(X)}_{\mr{log}}/X$ and $A'^*=G'^{*(X')}_{\mr{log}}/X'$. And the map $X'\rightarrow G'^*_{\mr{log}}$ having its image in $G'^{*(X')}_{\mr{log}}$ gives a short exact sequence
$$0\rightarrow A'^*\rightarrow G'^*_{\mr{log}}/X'\rightarrow \mc{R}\rightarrow 0,$$
where $\mc{R}$ denotes the quotient sheaf
$$\mc{H}om_{S^{\mr{log}}_{\mr{fl}}}(Y',\Gml/\Gm)/\mc{H}om_{S^{\mr{log}}_{\mr{fl}}}(Y',\Gml/\Gm)^{(X')}.$$
Putting all these ingredients together, we get a commutative diagram
\begin{equation}\label{eq2.2}
\xymatrix{
&&0\ar[d]   \\
&&A'^*\ar[r]^{f^*}\ar@{^(->}[d] &A^*\ar@{^(->}[d]  \\
0\ar[r]&F^*\ar[r]&G'^*_{\mr{log}}/X'\ar[r]\ar[d] &G^*_{\mr{log}}/X  \\
&&\mc{R}\ar[d]  \\
&&0
}
\end{equation}
with exact rows and columns.

\begin{lem}\label{lem2.3}
The sheaf $\mc{R}$ is torsion-free.
\end{lem}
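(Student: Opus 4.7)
The plan is to establish the torsion-freeness of $\mc{R}$ by showing that the subsheaf $\mc{H}om_{S^{\mr{log}}_{\mr{fl}}}(Y',\Gml/\Gm)^{(X')}$ is \emph{saturated} inside $\mc{H}om_{S^{\mr{log}}_{\mr{fl}}}(Y',\Gml/\Gm)$. Since $Y'$ is \'etale locally a finite rank free abelian group and every stalk of $\Gml/\Gm=M^{\mr{gp}}/\mc{O}^{\times}$ is the group completion of a sharp fs monoid, the ambient sheaf $\mc{H}om_{S^{\mr{log}}_{\mr{fl}}}(Y',\Gml/\Gm)$ is itself torsion-free. Torsion-freeness of the quotient $\mc{R}$ therefore amounts to the following saturation statement: if $\varphi\in \mc{H}om_{S^{\mr{log}}_{\mr{fl}}}(Y',\Gml/\Gm)$ satisfies $n\varphi\in \mc{H}om_{S^{\mr{log}}_{\mr{fl}}}(Y',\Gml/\Gm)^{(X')}$ for some positive integer $n$, then $\varphi$ itself already lies in $\mc{H}om_{S^{\mr{log}}_{\mr{fl}}}(Y',\Gml/\Gm)^{(X')}$.

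To verify this I would fix such a $\varphi$, a test object $U\in(\mr{fs}/S)$, a point $u\in U$, and an element $y\in Y'_{\bar{u}}$, and check admissibility of $\varphi$ at $(u,y)$. The admissibility of $n\varphi$ supplies $x_1,x_2\in X'_{\bar{u}}$ with $<x_1,y>'\,|\,\varphi(y)^n\,|\,<x_2,y>'$. The key step is to apply \cite[18.10]{k-k-n4} to the dual log 1-motive $M'^*=[X'\rightarrow G'^*_{\mr{log}}]$, whose associated pairing is the pairing of $M'$ with its two arguments swapped and hence is again non-degenerate; this produces $x'_1,x'_2\in X'_{\bar{u}}$ with $<x'_1,y>'^n\,|\,<x_1,y>'$ and $<x_2,y>'\,|\,<x'_2,y>'^n$. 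Chaining these divisibilities gives $<x'_1,y>'^n\,|\,\varphi(y)^n\,|\,<x'_2,y>'^n$.

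Finally, since the stalk $(M_U/\mc{O}_U^{\times})_{\bar{u}}$ is a saturated monoid, any relation of the form $a^n\,|\,b^n$ in its group completion $M_U^{\mr{gp}}/\mc{O}_U^{\times}$ forces $a\,|\,b$. Applying this cancellation twice converts the chain above into $<x'_1,y>'\,|\,\varphi(y)\,|\,<x'_2,y>'$, which is precisely the admissibility condition for $\varphi$ at $(u,y)$. The argument closely parallels the divisibility-cancellation technique already employed in the proof of Lemma \ref{lem2.2}, and the only point that demands care is justifying the invocation of \cite[18.10]{k-k-n4} in the transposed form, which is immediate from the symmetry of the non-degeneracy condition under swapping the character and cocharacter groups.
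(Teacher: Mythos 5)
Your proposal is correct and follows essentially the same route as the paper's proof: reduce torsion-freeness of $\mc{R}$ to saturation of $\mc{H}om_{S^{\mr{log}}_{\mr{fl}}}(Y',\Gml/\Gm)^{(X')}$ in the ambient Hom sheaf, invoke \cite[18.10]{k-k-n4} (for the transposed pairing) to replace the bounding elements by $n$-th power multiples, and cancel the exponents using saturatedness of $M_U/\mc{O}_U^{\times}$. Your extra remarks (torsion-freeness of the ambient sheaf, which is not actually needed, and the explicit justification of applying \cite[18.10]{k-k-n4} to the dual log 1-motive) are harmless refinements of the argument the paper gives.
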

\begin{proof}
We consider the following short exact sequence 
$$0\rightarrow \mc{Q}'^*\rightarrow \mc{H}om_{S^{\mr{log}}_{\mr{fl}}}(Y',\Gml/\Gm)\rightarrow \mc{R}\rightarrow 0,$$
where $\mc{Q}'^*$ denotes $\mc{H}om_{S^{\mr{log}}_{\mr{fl}}}(Y',\Gml/\Gm)^{(X')}$. To show $\mc{R}$ is torsion-free, it is enough to show that any section $\varphi\in \mc{H}om_{S^{\mr{log}}_{\mr{fl}}}(Y',\Gml/\Gm)$, satisfying $\varphi^n\in\mc{Q}'^*$, actually lies in $\mc{Q}'^*$. For any $U\in (\mr{fs}/S),u\in U, y'\in Y'_{\bar{u}}$, there exist $x'_{u,y',1},x'_{u,y',2}\in X'_{\bar{u}}$ such that $<x'_{u,y',1},y'>'|\, \varphi^n(y') \,|<x'_{u,y',2},y'>'$. By \cite[18.10]{k-k-n4}, there exist $x_1,x_2\in X'_{\bar{u}}$ such that $$<x_1,y'>'^{n}|<x'_{u,y',1},y'>',\quad <x'_{u,y',2},y'>'|<x_2,y'>'^n.$$
Then we get a relation $<x_1,y'>'^n|\,  \varphi^n(y') \,|<x_2,y'>'^n$. Removing the exponents, we further get $<x_1,y'>'|\,  \varphi(y') \,|<x_2,y'>'$, which shows that $\varphi\in \mc{Q}'^*$.
\end{proof}

\begin{thm}\label{thm2.1}
We have a canonical short exact sequence $$0\rightarrow F^*\rightarrow  A'^*\xrightarrow{f^*} A^*\rightarrow 0,$$
in other words, $f^*$ is an isogeny with kernel the Cartier dual of $F$.
\end{thm}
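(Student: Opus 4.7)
The plan is to read everything off diagram (\ref{eq2.2}). The key observation is that all the maps appearing there have already been identified: the row $0\to F^*\to G'^*_{\mr{log}}/X'\to G^*_{\mr{log}}/X$ is part of the $\mc{H}om$/$\mc{E}xt$ long exact sequence (\ref{eq2.1}), while the column $0\to A'^*\to G'^*_{\mr{log}}/X'\to \mc{R}\to 0$ is the defining presentation of $A'^*=G'^{*(X')}_{\mr{log}}/X'$ inside $G'^*_{\mr{log}}/X'$. So what remains to verify are three things: that $F^*$ actually lands in the sub-object $A'^*$, that the induced map $F^*\to A'^*\xrightarrow{f^*}A^*$ is zero and identifies $F^*$ with $\mr{Ker}(f^*)$, and finally that $f^*$ is surjective.

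First I would show $F^*\hookrightarrow A'^*$. Since $F$ is finite (killed by some $n\ge 1$), its Cartier dual $F^*$ is also killed by $n$ and in particular is torsion. On the other hand, $\mc{R}$ is torsion-free by Lemma \ref{lem2.3}. Hence the composition $F^*\hookrightarrow G'^*_{\mr{log}}/X'\twoheadrightarrow\mc{R}$ is zero, and the inclusion $F^*\hookrightarrow G'^*_{\mr{log}}/X'$ factors through $A'^*\hookrightarrow G'^*_{\mr{log}}/X'$. The composition $F^*\hookrightarrow A'^*\xrightarrow{f^*}A^*$ equals $F^*\hookrightarrow G'^*_{\mr{log}}/X'\to G^*_{\mr{log}}/X$ composed with $A^*\hookrightarrow G^*_{\mr{log}}/X$ (read off from the commutativity of (\ref{eq2.2})), hence is zero. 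Moreover, using the injectivity of $A'^*\hookrightarrow G'^*_{\mr{log}}/X'$ and of $A^*\hookrightarrow G^*_{\mr{log}}/X$, a diagram chase on (\ref{eq2.2}) identifies $\mr{Ker}(f^*)$ with $A'^*\cap F^*=F^*$.

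Next I would prove surjectivity of $f^*$ by invoking Proposition \ref{prop2.2}, specifically the implication $(3)\Rightarrow(1)$. For the finiteness hypothesis: since $F=\mr{Ker}(f)\in(\mr{fin}/S)_d$ by Proposition \ref{prop2.1}(2) (or equivalently by Remark \ref{rmk2.1}), its Cartier dual $F^*$ lies in $(\mr{fin}/S)_r$, so $\mr{Ker}(f^*)=F^*\in(\mr{fin}/S)_r$. For the dimension hypothesis: an isogeny preserves dimension by Proposition \ref{prop2.2}, i.e.\ $\mr{dim}\,A=\mr{dim}\,A'$, and taking the dual log $1$-motive preserves the rank of the semi-abelian part (the character lattice and semi-abelian dimensions are swapped with the cocharacter data in the expected symmetric fashion, so $\mr{dim}\,A^*=\mr{dim}\,A$, and similarly for $A'$), hence $\mr{dim}\,A'^*=\mr{dim}\,A^*$. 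Proposition \ref{prop2.2} then gives that $f^*$ is an isogeny; in particular $f^*$ is surjective for the log flat topology, completing the desired short exact sequence $0\to F^*\to A'^*\xrightarrow{f^*}A^*\to 0$.

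The only subtlety I anticipate is the dimension equality $\mr{dim}\,A^*=\mr{dim}\,A$; if this has not been recorded earlier, one can verify it directly from the definition of the dual log $1$-motive (its semi-abelian part $G^*$ is an extension of $B^*$ by the torus with character group $Y$, so its dimension equals $\mr{dim}\,B+\mr{rank}\,Y=\mr{dim}\,B+\mr{rank}\,X=\mr{dim}\,G$ because non-degeneracy of the pairing forces $\mr{rank}\,X=\mr{rank}\,Y$). Everything else is a routine diagram chase in (\ref{eq2.2}) together with Lemma \ref{lem2.3} and Proposition \ref{prop2.2}.
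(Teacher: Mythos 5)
Your proposal is correct and follows essentially the same route as the paper: diagram (\ref{eq2.2}) plus the torsion-freeness of $\mc{R}$ (Lemma \ref{lem2.3}) to see that $F^*$ factors through $A'^*$ and is the kernel of $f^*$, then Proposition \ref{prop2.2} with $\mr{dim}\,A'^*=\mr{dim}\,A^*$ to conclude that $f^*$ is an isogeny; you merely spell out the diagram chase and the dimension count that the paper leaves implicit. One small imprecision: the equality $\mr{rank}\,X=\mr{rank}\,Y$ is not forced by non-degeneracy alone (for a base monoid of rank $>1$ one can have non-degenerate pairings with unequal ranks), but it does hold here because log abelian varieties are pointwise polarisable, so your dimension argument goes through.
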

\begin{proof}
Since $F\in (\mr{fin}/S)_d$ by Proposition \ref{prop2.2} (2), $F^*\in (\mr{fin}/S)_d$. The sheaf $\mc{R}$ is torsion-free by Lemma \ref{lem2.3}, hence we have $\mr{Hom}_{S^{\mr{log}}_{\mr{fl}}}(F^*,\mc{R})=0$. It follows that the map $F^*\hookrightarrow G'^*_{\mr{log}}/X'$ in the diagram (\ref{eq2.2}) factors through $A'^*$. Furthermore, $F^*$ is actually the kernel of $f^*$.  Since $\mr{dim}A'^*=\mr{dim}A^*$, $f^*$ is an isogeny by Proposition \ref{prop2.3}.
\end{proof}

\subsection{The Poincar\'e complete reducibility theorem}
The Poincar\'e complete reducibility theorem for abelian varieties plays a very important role in the theory of abelian varieties. In this subsection, we formulate a Poincar\'e complete reducibility theorem for log abelian varieties admitting a polarisation over $S$.

\begin{lem}\label{lem2.4}
Let $M=[Y\rightarrow G_{\mr{log}}]$ be a log 1-motive over $S$ with a polarisation $(\lambda_{-1},\lambda_0):M=[Y\rightarrow G_{\mr{log}}]\rightarrow [X\rightarrow G^*_{\mr{log}}]=M^*$, and let $A$ be the log abelian variety associated to $M$. Let $M_1=[Y_1\rightarrow G_{1\mr{log}}]$ be another log 1-motive with $\mr{rank}_{\Z}Y_1=\mr{rank}_{\Z}X_1$, where $X_1$ is the character group of the torus part of $G_1$. Let $(i_{-1},i_0):M_1\rightarrow M$ be a homomorphism of log 1-motives and $i_{\mr{c}}:G_1\rightarrow G$ the homomorphism corresponding to $i_0$, and let $\gamma_{-1}:=i^*_{-1}\circ\lambda_{-1}\circ i_{-1}$, $\gamma_0:=i^*_0\circ\lambda_0\circ i_0$. Suppose that $i_{-1}$ is injective and $i_{\mr{c}}$ has finite kernel, then we have the following.
\begin{enumerate}[(1)]
\item The map $(\gamma_{-1},\gamma_0):M_1=[Y_1\rightarrow G_{1\mr{log}}]\rightarrow [X_1\rightarrow G^*_{1\mr{log}}]=M_1^*$ is a polarisation.
\item Let $A_1$ be the log abelian variety associated to $M_1$ and $i:A_1\rightarrow A$ the map induced by $(i_{-1},i_0)$, then we have $\mr{Ker}(i)\in (\mr{fin}/S)_r$.
\end{enumerate}
\end{lem}
\begin{proof}
To prove part (1), we need to verify the conditions (a), (b), (c) and (d) of \cite[Def. 2.8]{k-k-n2}. We have the following commutative diagram
$$\xymatrix{
Y_1\ar[r]^{i_{-1}}\ar[d] &Y\ar[r]^{\lambda_{-1}}\ar[d] &X\ar[r]^{i^*_{-1}}\ar[d] & X_1\ar[d]   \\
G_{1\mr{log}} \ar[r]^{i_0} &G_{\mr{log}}\ar[r]^{\lambda_0} &G^*_{\mr{log}}\ar[r]^{i^*_0} &G^*_{1\mr{log}}.
}$$
We also have the following commutative diagram
$$\xymatrix{
0\ar[r] &T_1\ar[r]\ar[d]^{i_{\mr{t}}} &G_1\ar[r]\ar[d]^{i_{\mr{c}}} &B_1\ar[r]\ar[d]^{i_{\mr{ab}}} &0 \\
0\ar[r] &T\ar[r]\ar[d]^{\lambda_{\mr{t}}}  &G\ar[r]\ar[d]^{\lambda_{\mr{c}}}  &B\ar[r]\ar[d]^{\lambda_{\mr{ab}}} &0  \\
0\ar[r] &T^*\ar[r]\ar[d]^{i^*_{\mr{t}}} &G^*\ar[r]\ar[d]^{i^*_{\mr{c}}} &B^*\ar[r]\ar[d]^{i^*_{\mr{ab}}} &0 \\
0\ar[r] &T^*_1\ar[r]  &G^*_1\ar[r]  &B^*_1\ar[r] &0
}$$
with exact rows, where the rows are the torus and abelian variety decomposition exact sequences of semi-abelian varieties. By the construction of the duality theory of log 1-motives, we have that $i^*_{-1}$ (resp. $i_{\mr{t}}^*$) is induced by $i_{\mr{t}}$ (resp. $i_{-1}$). Then condition (d) follows. Let $<,>:X\times Y\rightarrow \Gml/\Gm$ (resp. $<,>_1:X_1\times Y_1\rightarrow \Gml/\Gm$) be the pairing associated to $M$ (resp. $M_1$). For $y\in Y_{1\bar{s}}\backslash \{0\}$, where $s$ denotes the only point of $S$, we have $i_{-1}(y)\neq 0$ by the injectivity of $i_{-1}$. Hence we have
$$<\gamma_{-1}(y),y>_{1\bar{s}}=<\lambda_{-1}\circ i_{-1}(y),i_{-1}(y)>_{\bar{s}}\in (M_{S,\bar{s}}/\mc{O}^{\times}_{S,\bar{s}})\backslash \{1\}$$
which gives condition (c). For condition (b), it suffices to show the injectivity of $\gamma_{-1}$ because of $\mr{rank}_{\Z} Y_1=\mr{rank}_{\Z} X_1$. But this already follows from condition (c). At last we show condition (a). Since $i_{\mr{c}}$ has finite kernel, $i_{\mr{ab}}$ must have finite kernel by diagram chasing. Hence $i_{\mr{ab}}$ is a finite morphism. We want to show $\gamma_{\mr{ab}}=i_{\mr{ab}}^*\circ\lambda_{\mr{ab}}\circ i_{\mr{ab}}$ is a polarisation of $B_1$. Without loss of generality, we may assume $\lambda_{\mr{ab}}=\varphi_{\mc{L}}$ for an ample line bundle $\mc{L}$ on $B$, where $\varphi_{\mc{L}}$ is defined by $\varphi_{\mc{L}}(b):=t_b^*\mc{L}\otimes \mc{L}^{-1}$ for $b\in B$. Then we have $\gamma_{\mr{ab}}=i_{\mr{ab}}^*\circ \varphi_{\mc{L}}\circ i_{\mr{ab}}=\varphi_{i_{\mr{ab}}^*\mc{L}}$. Clearly $i_{\mr{ab}}^*\mc{L}$ is ample, so $\gamma_{\mr{ab}}$ is a polarisation on $B_1$. This finishes the verification of condition (a). Hence $(\gamma_{-1},\gamma_0)$ is a polarisation of $M_1$. 

Now we prove part (2). By Proposition \ref{prop1.6}, the homomorphism $i$ induces the following two commutative diagrams
$$\xymatrix{
0\ar[r] &G_1\ar[r]\ar[d]^{i_{\mr{c}}} &A_1\ar[r]\ar[d]^i &\mc{Q}_1/Y_1\ar[r]\ar[d]^{i_{\mr{d}}} &0 \\
0\ar[r] &G\ar[r]  &A\ar[r]  &\mc{Q}/Y\ar[r] &0 }$$
and
$$\xymatrix{0\ar[r] &Y_1\ar[r]\ar[d]^{i_{-1}} &\mc{Q}_1\ar[r]\ar[d]^{\tilde{i}_{\mr{d}}}  &\mc{Q}_1/Y_1\ar[r]\ar[d]^{i_{\mr{d}}} &0 \\
0\ar[r] &Y\ar[r]  &\mc{Q}\ar[r]  &\mc{Q}/Y\ar[r] &0
}$$
with exact rows. Since $i_{\mr{c}}$ has finite kernel, $i_{\mr{t}}$ has finite kernel too. Hence $\tilde{i}_{\mr{d}}$ is injective by part (3) Proposition \ref{prop1.3}. Applying snake lemma to the above two diagrams, we get exact sequences $0\rightarrow \mr{Ker}(i_{\mr{c}})\rightarrow\mr{Ker}(i)\rightarrow  \mr{Ker}(i_{\mr{d}})\rightarrow \mr{Coker}(i_{\mr{c}})$ and $0\rightarrow \mr{Ker}(i_{\mr{d}})\xrightarrow{\alpha} Y/Y_1$.
\begin{claim}
The map $\alpha$ maps $\mr{Ker}(i_{\mr{d}})$ onto the torsion part $(Y/Y_1)_{\mr{tor}}$ of $Y/Y_1$.  
\end{claim}
\begin{proof}[Proof of CLAIM]
First we show $(Y/Y_1)_{\mr{tor}}\subset\mr{Im}(\alpha)$. Let $y\in Y$ be such that the quotient class $\bar{y}$ represented by $y$ lies in $(Y/Y_1)_{\mr{tor}}$. We have $ny=i_{-1}(y_1)$ for some $n\in\N$ and $y_1\in Y_1$. The map $\mc{Q}_1\xrightarrow{\times n}\mc{Q}_1$ is bijective by Lemma \ref{lem2.2}, hence there exists $\varphi_1\in\mc{Q}_1$ such that $\varphi_1^n=<-,y_1>_1$. It follows then $\tilde{i}_{\mr{d}}(\varphi_1)^n=<-,y>^n$. Since the map $\mc{Q}\xrightarrow{\times n}\mc{Q}$ is bijective by Lemma \ref{lem2.2}, we get $\tilde{i}_{\mr{d}}(\varphi_1)=<-,y>$. Whence $\bar{y}=\alpha(\bar{\varphi}_1)$, where $\bar{\varphi}_1$ denotes the quotient class of $\varphi_1$ in $\mc{Q}_1/Y_1$.

Now we show the converse inclusion. It suffices to show, for $\varphi_1\in\mc{Q}_1$ with $\tilde{i}_{\mr{d}}(\varphi_1)=<-,y>$ for some $y\in Y$, $\bar{y}\in (Y/Y_1)_{\mr{tor}}$. Since $\gamma_{-1}=i^*_{-1}\circ\lambda_{-1}\circ i_{-1}$ is of finite cokernel by part (1), there exists some positive integer $m$ such that $mi^*_{-1}\circ\lambda_{-1}(y)=i^*_{-1}\circ\lambda_{-1}\circ i_{-1}(y_1)$ for some $y_1\in Y_1$. Let $w:=my-i_{-1}(y_1)$, and we have $i_{-1}^*\circ\lambda_{-1}(w)=0$. Hence
\begin{align*}
<\lambda_{-1}(w),w>&=<\lambda_{-1}(w),my>-<\lambda_{-1}(w),i_{-1}(y_1)> \\
&=\tilde{i}_{\mr{d}}(\varphi_1)(\lambda_{-1}(w))^m-<i^*_{-1}\circ\lambda_{-1}(w),y_1>_1\\
&=\varphi_1(i_{-1}^*\circ\lambda_{-1}(w))^m-<0,y_1>_1  \\
&=0.
\end{align*}
Since $(\lambda_{-1},\lambda_0)$ is a polarisation, we must have $w=0$. This shows $\bar{y}\in (Y/Y_1)_{\mr{tor}}$.
\end{proof}

By the above claim, we get an exact sequence $$0\rightarrow \mr{Ker}(i_{\mr{c}})\rightarrow\mr{Ker}(i)\rightarrow  (Y/Y_1)_{\mr{tor}}\rightarrow \mr{Coker}(i_{\mr{c}}).$$
Let $F$ be the kernel of $(Y/Y_1)_{\mr{tor}}\rightarrow \mr{Coker}(i_{\mr{c}})$. We have $F\in (\mr{fin}/S)_c$. Then the short exact sequence $0\rightarrow \mr{Ker}(i_{\mr{c}})\rightarrow\mr{Ker}(i)\rightarrow F\rightarrow 0$ forces $\mr{Ker}(i)\in (\mr{fin}/S)_r$ by Proposition \ref{prop-app.1}. 
\end{proof}

\begin{prop}\label{prop2.5}
Let $f:A\rightarrow A'$ be a homomorphism of log abelian varieties over $S$. Then there exists a log abelian subvariety $j:A_1\hookrightarrow A$ such that $f|_{A_1}=0$, and $A_1$ possesses the following universal property: for any homomorphism $g:A_2\rightarrow A$ of log abelian varieties over $S$ such that $f\circ g=0$, $g$ factors through $A_1$ uniquely. In other words, $A_1$ is the kernel of $f$ in the category of log abelian varieties over $S$.
\end{prop}
\begin{proof}
Let $M=[Y\rightarrow G_{\mr{log}}]$ (resp. $M'=[Y'\rightarrow G'_{\mr{log}}]$) be the log 1-motive defining $A$ (resp. $A'$), and let $(f_{-1},f_0):M\rightarrow M'$ be the homomorphism defining $f$.

We first construct the log 1-motive defining $A_1$. The homomorphism $f$ induces a homomorphism $f_{\mr{c}}:G\rightarrow G'$. Let $G_1$ be the reduced neutral component of $\mr{Ker}(f_{\mr{c}})$, then $G_1$ is a semi-abelian variety by \cite[Rem. 5.4.7. (iii)]{bri1}. Let $j_{\mr{c}}$ be the inclusion $G_1\subset G$, and let $j_0:G_{1\mr{log}}\rightarrow G_{\mr{log}}$ be the map induced by $j_{\mr{c}}$. We consider the following commutative diagram
$$\xymatrix{
0\ar[r] &G_1\ar[r]\ar[d]^{j_{\mr{c}}} &G_{1\mr{log}}\ar[r]\ar[d]^{j_0} &\mc{H}om_{S^{\mr{log}}_{\mr{fl}}}(X_1,\bar{\mathbb{G}}_{\mr{m,log}})\ar[r]\ar[d]^{\bar{j_0}} &0 \\
0\ar[r] &G\ar[r]\ar[d]^{f_{\mr{c}}}  &G_{\mr{log}}\ar[r]\ar[d]^{f_0}  &\mc{H}om_{S^{\mr{log}}_{\mr{fl}}}(X,\bar{\mathbb{G}}_{\mr{m,log}}) \ar[r]\ar[d]^{\bar{f_0}} &0   \\
0\ar[r] &G'\ar[r]  &G'_{\mr{log}}\ar[r]  &\mc{H}om_{S^{\mr{log}}_{\mr{fl}}}(X',\bar{\mathbb{G}}_{\mr{m,log}}) \ar[r] &0  
}$$
with exact rows, where $X_1$ (resp. $X$, resp. $X'$) is the character group of the torus part $T_1$ (resp. $T$, resp. $T'$) of $G_1$ (resp. $G$, resp. $G'$).  By part (3) of Proposition \ref{prop1.3}, we have $$\mr{Ker}(\bar{f_0})=\mc{H}om_{S^{\mr{log}}_{\mr{fl}}}(\mr{Coker}(f_{\mr{l}}),\bar{\mathbb{G}}_{\mr{m,log}}),$$
where $f_{\mr{l}}:X'\rightarrow X$ is the map induced by the torus part $f_{\mr{t}}:T\rightarrow T'$ of $f_{\mr{c}}$. 
The above diagram gives rise to another commutative diagram
$$\xymatrix{
0\ar[r] &G_1\ar[r]\ar@{^(->}[d]^{j_{\mr{c}}} &G_{1\mr{log}}\ar[r]\ar[d]^{j_0} &\mc{H}om_{S^{\mr{log}}_{\mr{fl}}}(X_1,\bar{\mathbb{G}}_{\mr{m,log}})\ar[r]\ar[d]^{\bar{j_0}} &0 \\
0\ar[r] &\mr{Ker}(f_{\mr{c}})\ar[r]  &\mr{Ker}(f_0)\ar[r]^-{\alpha}  &\mc{H}om_{S^{\mr{log}}_{\mr{fl}}}(\mr{Coker}(f_{\mr{l}}),\Gmlb)    
}$$
with exact rows. Since the map $\mr{Coker}(f_{\mr{l}})\rightarrow X_1$ induced by $G_1\rightarrow G\rightarrow G'$ is an isomorphism up to torsion, the map $\bar{j_0}$ in the above diagram is an isomorphism. Then we have that $\alpha$ is surjective. By the snake lemma, we get that $G_{1\mr{log}}$ is canonically embedded into $\mr{Ker}(f_0)$ with finite cokernel $\mr{Ker}(f_{\mr{c}})/G_1$.

Now let $Y_1$ be the pullback of $G_{1\mr{log}}$ along $\mr{Ker}({f_{-1}})\hookrightarrow \mr{Ker}(f_0)$, then $\mr{rank}_{\Z}Y_1=\mr{rank}_{\Z}\mr{Ker}({f_{-1}})$. Let $j_{-1}$ be the canonical inclusion $Y_1\subset Y$, we get a log 1-motive $M_1:=[Y_1\rightarrow G_{1\mr{log}}]$ together with a canonical map $(j_{-1},j_0):M_1\rightarrow M$. In order to apply Lemma \ref{lem2.4}, we need to show $\mr{rank}_{\Z}Y_1=\mr{rank}_{\Z}X_1$. Without loss of generality, we may assume that both $M$ and $M'$ admit a polarisation, in particular we have homomorphisms $h:Y\rightarrow X$ and $h':Y'\rightarrow X'$ which are both injective with finite cokernel. Consider the following diagram
$$\renewcommand{\arraystretch}{1.3}
\begin{array}[c]{ccccc}
\mr{Coker}(f^*_{-1})&\times &\mr{Ker}(f_{-1})    \\
\uparrow\scriptstyle{d} &&\downarrow\scriptstyle{a}  \\
X&\times&Y&\xrightarrow{<,>}&\Gml/\Gm \\
\uparrow\scriptstyle{f^*_{-1}}&&\downarrow\scriptstyle{f_{-1}}&&\rotatebox{90}{$=$}\\
X'&\times&Y'&\xrightarrow{<,>'}&\Gml/\Gm  \\
\uparrow\scriptstyle{c} &&\downarrow\scriptstyle{b}   \\
\mr{Ker}(f^*_{-1}) &\times &\mr{Coker}(f_{-1})
\end{array}$$
in which the parings $<,>$ and $<,>'$ are compatible with the maps $f^*_{-1}$ and $f_{-1}$, we have the following relations
\begin{enumerate}[(1)]
\item the composition $\mr{Ker}(f_{-1})\xrightarrow{a} Y\xrightarrow{h} X\xrightarrow{d}\mr{Coker}(f^*_{-1})$ is injective, whence $\mr{rank}_{\Z}\mr{Ker}(f_{-1})\leq \mr{rank}_{\Z}\mr{Coker}(f^*_{-1})$;
\item the composition $\mr{Ker}(f^*_{-1})\otimes\Q\xrightarrow{c_{\Q}} X'\otimes\Q\xrightarrow{h'^{-1}_{\Q}} Y'\otimes\Q\xrightarrow{b_{\Q}}\mr{Coker}(f_{-1})\otimes\Q$ is injective, whence $\mr{rank}_{\Z}\mr{Ker}(f^*_{-1})\leq \mr{rank}_{\Z}\mr{Coker}(f_{-1})$;
\item $\mr{rank}_{\Z}X=\mr{rank}_{\Z}Y$, $\mr{rank}_{\Z}X'=\mr{rank}_{\Z}Y'$;
\item $\mr{rank}_{\Z}\mr{Ker}(f_{-1})-\mr{rank}_{\Z}\mr{Coker}(f_{-1})=\mr{rank}_{\Z}Y-\mr{rank}_{\Z}Y'$;
\item $\mr{rank}_{\Z}\mr{Coker}(f^*_{-1})-\mr{rank}_{\Z}\mr{Ker}(f^*_{-1})=\mr{rank}_{\Z}X-\mr{rank}_{\Z}X'$.
\end{enumerate}
The relations (3), (4) and (5) are trivial. For $y\in \mr{Ker}(f_{-1})$ such that $d\circ h(y)=0$, we have $h(y)=f^*_{-1}(x')$ for some $x'\in X'$, hence $$0=<x',f_{-1}(y)>'=<f^*_{-1}(x'),y>=<h(y),y>.$$ This implies $y=0$, hence relation (1). Relation (2) can be shown by a similar argument. Now these five relations together force $\mr{rank}_{\Z}\mr{Ker}(f_{-1})=\mr{rank}_{\Z}\mr{Coker}(f^*_{-1})$, so we get $\mr{rank}_{\Z}Y_1=\mr{rank}_{\Z}X_1$.

Applying Lemma \ref{lem2.4} to $M_1$ (if necessary we take base change to $\bar{k}$ in order to get a polarisation on $M$), we have that $M_1$ defines a log abelian variety $A_1$ and $(j_{-1},j_0)$ gives a homomorphism $j:A_1\rightarrow A$. We leave the proof of the injectivity of $j$ to Lemma \ref{lem2.5}.

Now we are left with checking the universal property. Let 
$$(g_{-1},g_0):M_2=[Y_2\rightarrow G_{2\mr{log}}]\rightarrow M$$
be the homomorphism defining $g$. By the equivalence of categories from Theorem \ref{thm1.5}, we have that $f\circ g=0$ implies $f_{-1}\circ g_{-1}=f_{0}\circ g_{0}=f_{\mr{c}}\circ g_{\mr{c}}=0$. Hence the map $g_{\mr{c}}$ factors through the map $j_{\mr{c}}$ uniquely, further the map $g_0$ factors through $j_0$ uniquely. The equality $f_{-1}\circ g_{-1}=0$ implies that $g_{-1}(Y_2)\subset \mr{Ker}(f_{-1})$. Since $Y_1$ is defined as the pullback of $G_{1\mr{log}}$ along $\mr{Ker}({f_{-1}})\hookrightarrow \mr{Ker}(f_0)$,  the homomorphism $Y_2\rightarrow \mr{Ker}(f_{-1})$ factors through $Y_1$ uniquely. It follows that $(g_{-1},g_0)$ factors through $(j_{-1},j_0)$ uniquely, and $g$ factors through $j$ uniquely.
\end{proof}  

\begin{lem}\label{lem2.5}
The homomorphism $j:A_1\rightarrow A$ in the proof of Proposition \ref{prop2.5} is injective.
\end{lem}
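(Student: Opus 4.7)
The plan is to deduce injectivity of $j$ from the functoriality diagram
$$\xymatrix{
0\ar[r] &Y_1\ar[r]^{u_1}\ar[d]^{j_{-1}} &G_{1\mr{log}}^{(Y_1)}\ar[r]\ar[d]^{\tilde{j}} &A_1\ar[r]\ar[d]^{j} &0 \\
0\ar[r] &Y\ar[r]^{u}  &G_{\mr{log}}^{(Y)}\ar[r]  &A\ar[r] &0
}$$
with exact rows provided by Theorem \ref{thm1.1} (2). My first step is to show that $\tilde{j}$ is injective. The map $j_{\mr{c}}:G_1\hookrightarrow G$ is injective by construction (as $G_1$ is the reduced neutral component of $\mr{Ker}(f_{\mr{c}})$ viewed inside $G$), so Proposition \ref{prop1.3} (4) gives that $j_0:G_{1\mr{log}}\to G_{\mr{log}}$ is injective; and by Proposition \ref{prop1.4} (3) this map restricts to $\tilde{j}:G_{1\mr{log}}^{(Y_1)}\to G_{\mr{log}}^{(Y)}$, which is therefore also injective.

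Next, I would chase the kernel of $j$ through the diagram. For any $U\in (\mr{fs}/S)$ and $\bar{a}\in\mr{Ker}(j)(U)$ represented (after a log flat cover if necessary) by a section $a$ of $G_{1\mr{log}}^{(Y_1)}$, the image $\tilde{j}(a)$ lies in $u(Y)\subset G_{\mr{log}}^{(Y)}$, so there exists $y\in Y$ with $u(y)=\tilde{j}(a)=j_0(a)$. I would then argue $y\in Y_1$ in two steps. Since $u(y)=j_0(a)\in j_0(G_{1\mr{log}})\subset\mr{Ker}(f_0)$, and $u'$ is injective by non-degeneracy of $M'$ (the composition $Y'\xrightarrow{u'}G'_{\mr{log}}\twoheadrightarrow G'_{\mr{log}}/G'=\mc{H}om_{S^{\mr{log}}_{\mr{fl}}}(X',\Gml/\Gm)$ is injective by Remark \ref{rmk1.1}), the equality $u'(f_{-1}(y))=f_0(u(y))=0$ forces $f_{-1}(y)=0$; hence $y\in u^{-1}(G_{1\mr{log}})\cap\mr{Ker}(f_{-1})$, which is precisely the definition of $Y_1$ in the proof of Proposition \ref{prop2.4}.

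Finally, I would use Remark \ref{rmk1.2} to observe that $u_1(y)\in G_{1\mr{log}}^{(Y_1)}$, and compute $\tilde{j}(u_1(y))=j_0(u_1(y))=u(y)=\tilde{j}(a)$. By the injectivity of $\tilde{j}$ established in the first step, $a=u_1(y)$, so $\bar{a}=0$ in $A_1$. This shows $\mr{Ker}(j)=0$ sheaf-theoretically, completing the proof.

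The main obstacle, if any, is bookkeeping: making sure the inclusion $G_{1\mr{log}}\subset\mr{Ker}(f_0)$ constructed in Proposition \ref{prop2.4} is compatible with the pullback definition of $Y_1$, and that the non-degeneracy inherited by $M_1$ (via Lemma \ref{lem2.4}) really does guarantee the injectivity of $u_1$ that makes Remark \ref{rmk1.2} applicable. Once these identifications are pinned down, the snake-style diagram chase above is essentially mechanical.
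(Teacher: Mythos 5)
Your argument is correct, and it takes a genuinely different (and more economical) route than the paper's. You run a snake-lemma chase on the presentation $0\rightarrow Y\rightarrow G_{\mr{log}}^{(Y)}\rightarrow A\rightarrow 0$ of Theorem \ref{thm1.1}: injectivity of $\tilde{j}$ comes from Proposition \ref{prop1.3} (4), and the connecting map $\mr{Ker}(j)\rightarrow Y/Y_1$ vanishes because the lift $y$ of $\tilde{j}(a)$ satisfies both $u(y)\in j_0(G_{1\mr{log}})$ and $f_{-1}(y)=0$ (the latter via $f_0\circ j_0=0$ and the injectivity of $u'$, which is exactly the non-degeneracy of $M'$), and $Y_1$ was \emph{defined} in Proposition \ref{prop2.4} as the pullback cutting out precisely these two conditions. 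The paper instead reduces to the injectivity of the discrete part $j_{\mr{d}}:\mc{Q}_1/Y_1\rightarrow \mc{Q}/Y$, and there it only shows that a section of the kernel is annihilated by some positive integer $n$, using the polarisation $h:Y\rightarrow X$, the rank identity $\mr{rank}_{\Z}\mr{Ker}(j_{\mr{l}})=\mr{rank}_{\Z}(Y/Y_1)$, and a pairing computation forcing $\langle h(z),z\rangle=1$; it then concludes $\mr{Ker}(j)\in(\mr{fin}/S)_r$ and finally kills this finite kernel by combining the universal property of $A_1$ with Proposition \ref{prop2.1} (3). Your route avoids the polarisation, the rank count, and the finite-kernel detour entirely, producing $\mr{Ker}(j)=0$ on the nose; the only inputs it leans on beyond the diagram are the identity $f_0\circ j_0=0$ and the pullback description of $Y_1$, both of which are explicitly available from the construction in Proposition \ref{prop2.4}, so the bookkeeping concerns you raise at the end are harmless. (One cosmetic remark: Remark \ref{rmk1.2} is not really needed for the last step, since $a$ already lies in $G_{1\mr{log}}^{(Y_1)}$ and you only need $a=u_1(y)$ to conclude that its class in $G_{1\mr{log}}^{(Y_1)}/Y_1$ vanishes.)
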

\begin{proof}
Let the notation be as in the proof of Proposition \ref{prop2.5}. We have the following commutative diagram 
$$\xymatrix{
Y_1\ar@{^{(}->}[r]\ar@{^{(}->}[d] &\mr{Ker}(f_{-1})\ar@{^{(}->}[r]\ar@{^{(}->}[d] &Y\ar[r]^{f_{-1}}\ar@{^{(}->}[d] &Y'\ar@{^{(}->}[d]  \\
G_{1\mr{log}}\ar@{^{(}->}[r] &\mr{Ker}(f_0)\ar@{^{(}->}[r] &G_{\mr{log}}\ar[r]^{f_0} &G'_{1\mr{log}} .
}$$
Since the left square is a pullback diagram, we have that the canonical map $G_{1\mr{log}}/Y_1\rightarrow \mr{Ker}(f_0)/\mr{Ker}(f_{-1})$ is injective. By diagram chasing, the canonical map $\mr{Ker}(f_0)/\mr{Ker}(f_{-1})\rightarrow G_{\mr{log}}/Y$ is also injective. Hence the canonical map $G_{1\mr{log}}/Y_1\rightarrow G_{\mr{log}}/Y$ is injective. Then the injectivity of $j$ follows.
\end{proof}

\begin{lem}\label{lem2.6}
Let $i:A_1\hookrightarrow A$ be an inclusion of log abelian varieties over $S$. Let $(i_{-1},i_0):M_1=[Y_1\rightarrow G_{1\mr{log}}]\rightarrow [Y\rightarrow G_{\mr{log}}]=M$ be the homomorphism of log 1-motives defining $i$, and let $i_{\mr{c}}:G_1\rightarrow G$ be the homomorphism of semi-abelian varieties induced by $i_0$. Then $i_{-1}$, $i_0$ and $i_{\mr{c}}$ are all injective.
\end{lem}
\begin{proof}
The injectivity of $i_{\mr{c}}$ follows from the injectivity of $i$ by diagram (\ref{eq1.10}). The injectivity of $i_0$ follows from the injectivity of $i_{\mr{c}}$ by part (4) of Proposition \ref{prop1.3}. The injectivity of $i_{-1}$ follows from that of $i_0$ by diagram (\ref{eq1.9}).
\end{proof}

\begin{thm}[Poincar\'e complete reducibility theorem]\label{thm2.2}
Let $A$ be a log abelian variety over $S$ with a polarisation $\lambda:A\rightarrow A^*$, and $A_1$ a log abelian subvariety of $A$. Then there is another log abelian subvariety $A_2$ such that $A_1\times A_2$ is isogenous to $A$. 
\end{thm}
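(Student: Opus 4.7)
The plan is to mimic the classical proof of Poincar\'e reducibility for abelian varieties. Let $i : A_1 \hookrightarrow A$ denote the given inclusion. Using the duality $(-)^{*}$ from Proposition \ref{prop1.5}, I form the dual map $i^{*} : A^{*} \to A_1^{*}$ and set $\phi := i^{*} \circ \lambda : A \to A_1^{*}$. Applying Proposition \ref{prop2.4} to $\phi$ produces a log abelian subvariety $j : A_2 \hookrightarrow A$ which is the categorical kernel of $\phi$ in the category of log abelian varieties over $S$; this $A_2$ will be my candidate complement of $A_1$.

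The key observation is that $\phi \circ i = i^{*} \circ \lambda \circ i : A_1 \to A_1^{*}$ is an isogeny. Indeed, under the equivalence of Theorem \ref{thm1.5}, this composition corresponds to the homomorphism of log $1$-motives $(i_{-1}^{*} \circ \lambda_{-1} \circ i_{-1},\, i_0^{*} \circ \lambda_0 \circ i_0)$, which is a polarization of $M_1$ by Lemma \ref{lem2.4}. Hence $\phi|_{A_1}$ is itself a polarization of $A_1$, and therefore an isogeny by Example \ref{ex2.2}. Since $\phi \circ j = 0$, the intersection $A_1 \cap A_2$, viewed as a subsheaf of $A_1$, is contained in $\mr{Ker}(\phi|_{A_1})$, so $A_1 \cap A_2$ lies in $(\mr{fin}/S)_r$.

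For the dimension count, I pass to the semiabelian parts using the notation of Proposition \ref{prop1.6}. The restriction $\phi_{\mr{c}} \circ i_{\mr{c}} : G_1 \to G_1^{*}$ is the semiabelian part of the polarization $\phi \circ i$, hence an isogeny; in particular it is surjective, and so is $\phi_{\mr{c}} : G \to G_1^{*}$. By the construction in the proof of Proposition \ref{prop2.4}, the semiabelian part $G_2$ of $A_2$ is the reduced neutral component of $\mr{Ker}(\phi_{\mr{c}})$, giving $\dim G_2 = \dim G - \dim G_1^{*} = \dim G - \dim G_1$. Using the definition $\dim A = \dim G$ (and similarly for $A_1$, $A_2$), we obtain $\dim A_1 + \dim A_2 = \dim A$.

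Finally, consider the addition map $\sigma : A_1 \times A_2 \to A$ defined by $(a_1, a_2) \mapsto i(a_1) + j(a_2)$. Its kernel is the antidiagonal image of $A_1 \cap A_2$, hence lies in $(\mr{fin}/S)_r$ by the second paragraph; combined with the equality $\dim(A_1 \times A_2) = \dim A$, the implication $(3) \Rightarrow (1)$ of Proposition \ref{prop2.2} shows that $\sigma$ is an isogeny, which is the desired statement. The main obstacle will be confirming rigorously that $A_1 \times A_2$ is itself a log abelian variety (obtained from the product log $1$-motive $M_1 \times M_2$, with dimensions adding) and identifying $\mr{Ker}(\sigma)$ with $A_1 \cap A_2$ as sheaves on $S^{\mr{log}}_{\mr{fl}}$; both should follow from Theorem \ref{thm1.5} and the short exact sequence of Theorem \ref{thm1.1}(2) applied to the product log $1$-motive, but they require a modest amount of diagram chasing to make airtight.
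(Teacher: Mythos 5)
Your overall strategy is the one the paper uses: set $\phi=i^*\circ\lambda$, take $A_2$ to be the categorical kernel of $\phi$ supplied by Proposition \ref{prop2.4}, observe via Lemma \ref{lem2.4} that $\phi\circ i$ is a polarisation of $A_1$ and hence an isogeny, and finish by applying Proposition \ref{prop2.2} to the addition map $A_1\times A_2\rightarrow A$. Your surjectivity argument for $\phi_{\mr{c}}$ (it factors a surjection $\phi_{\mr{c}}\circ i_{\mr{c}}$) is in fact slicker than the paper's, which instead proves surjectivity of $i^*_{\mr{c}}$ directly by analysing its torus and abelian parts.

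There is, however, one genuine gap, and it sits exactly where the paper spends most of its effort. To invoke the implication $(3)\Rightarrow(1)$ of Proposition \ref{prop2.2} you must show $\mr{Ker}(\sigma)\in(\mr{fin}/S)_r$, i.e.\ that this kernel is \emph{representable} by a log scheme. Your argument only shows that $\mr{Ker}(\sigma)\cong A_1\cap A_2$ is a \emph{subsheaf} of the representable finite object $\mr{Ker}(\phi|_{A_1})$; membership in $(\mr{fin}/S)_r$ does not pass to arbitrary subsheaves, and the paper is consistently careful about this elsewhere (Lemma \ref{lem2.1} and Proposition \ref{prop2.1} always \emph{assume} the subobject already lies in $(\mr{fin}/S)_r$ before drawing conclusions). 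This is why the paper does not argue at the level of the ambient sheaf $A$ but instead builds the product log $1$-motive $M'=[Y_1\times Y_2\rightarrow (G_1\times G_2)_{\mr{log}}]$ with the sum map $(\alpha_{-1},\alpha_0)$ to $M$, proves that $\alpha_{-1}$ is injective with finite cokernel (using the polarisation pairing to get $Y_1\cap Y_2=0$) and that $\alpha_{\mr{c}}$ is an isogeny, applies Lemma \ref{lem2.2} to identify $\mr{Ker}(\tilde\alpha)=\mr{Ker}(\alpha_{\mr{c}})$, and extracts the exact sequence $0\rightarrow\mr{Ker}(\alpha_{\mr{c}})\rightarrow\mr{Ker}(\alpha)\rightarrow\mr{Coker}(\alpha_{-1})\rightarrow 0$, from which representability follows by Kato's result on extensions of classical finite group objects. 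The items you defer as ``a modest amount of diagram chasing'' --- that $A_1\times A_2$ is the log abelian variety of $M_1\times M_2$ and that $\mr{Ker}(\sigma)$ is controlled --- are precisely this computation, and the polarisation enters it a second time (to show $Y_1\cap Y_2=0$), not just in establishing that $\phi|_{A_1}$ is an isogeny. Your proof becomes complete once this $1$-motive-level kernel computation is carried out.
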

\begin{proof}
Let $M=[Y\rightarrow G_{\mr{log}}]$ and $M_1=[Y_1\rightarrow G_{1\mr{log}}]$ be the log 1-motives defining $A$ and $A_1$ respectively. Let $i$ be the inclusion $A_1\subset A$, let $i^*$ be the dual of $i$, and let $f=i^*\circ\lambda$. Let $(f_{-1},f_0):M\rightarrow M_1^*$ be the homomorphism defining $f$.  By Proposition \ref{prop1.6}, we have a commutative diagram
$$\xymatrix{
0\ar[r] &G\ar[r]\ar[d]_{\lambda_{\mr{c}}}\ar@{.>}@/^2pc/[dd]^<<<<<<<{f_{\mr{c}}}  &A\ar[r]\ar[d]_{\lambda}\ar@{.>}@/^2pc/[dd]^<<<<<<<f &\mc{Q}/Y\ar[r]\ar[d]_{\lambda_{\mr{d}}}\ar@{.>}@/^2pc/[dd]^<<<<<<<{f_{\mr{d}}} &0  \\
0\ar[r] &G^*\ar[r]\ar[d]_{i^*_{\mr{c}}} &A^*\ar[r]\ar[d]_{i^*} &\mc{Q}^*/X\ar[r]\ar[d]_{i^*_{\mr{d}}} &0 \\
0\ar[r] &G_1^*\ar[r] &A_1^*\ar[r]  &\mc{Q}_1^*/X_1\ar[r] &0
}$$
with exact rows. 

Firstly we study the homomorphism $f_{\mr{c}}$ via $\lambda_{\mr{c}}$ and $i_{\mr{c}}^*$. By Example \ref{ex2.2}, $\lambda$ is an isogeny, hence $\mr{Ker}(\lambda)\in (\mr{fin}/S)_r$ and $\mr{dim}G=\mr{dim}G^*$ by Proposition \ref{prop2.3}. So $\lambda_{\mr{c}}$ is an isogeny. The construction of $i_{\mr{c}}^*$ gives a commutative diagram 
$$\xymatrix{
0\ar[r] &T^*\ar[r]\ar[d]^{i_{\mr{t}}^*} &G^*\ar[r]\ar[d]^{i^*_{\mr{c}}} &B^*\ar[r]\ar[d]^{i_{\mr{ab}}^*} &0 \\
0\ar[r] &T^*_1\ar[r] &G_1^*\ar[r]  &B^*_1\ar[r] &0
}$$
with exact rows, where the map $T^*\rightarrow T^*_1$ is induced by $i_{-1}:Y_1\rightarrow Y$ and the map $B^*\rightarrow B^*_1$ is induced by the abelian variety part $i_{\mr{ab}}$ of $i_{\mr{c}}$. The injectivity of $i$ implies that $i_{\mr{c}}$ and $i_{-1}$ are both injective by Lemma \ref{lem2.6}. It follows that $i_{\mr{t}}^*$ is surjective. The injectivity of $i_{\mr{c}}$ implies that $i_{\mr{ab}}:B_1\rightarrow B$ has finite kernel. Hence $i_{\mr{ab}}^*:B^*\rightarrow B_1^*$ is surjective, and so is $i_{\mr{c}}^*$. Then the surjectivity of $f_{\mr{c}}$ follows.

Let $j:A_2\hookrightarrow A$ be the kernel of $f$ in the category of log abelian varieties guaranteed by Proposition \ref{prop2.5}. The proof of Proposition \ref{prop2.5} tells us that the log abelian subvariety $A_2$ could come from a log 1-motive $M_2=[Y_2\rightarrow G_{2\mr{log}}]$ and a homomorphism $(j_{-1},j_0):M_2\rightarrow M$, such that both $j_{-1}$ and $j_0$ are injective, and the semi-abelian variety $G_2$ underlying $ G_{2\mr{log}}$ is the reduced neutral component of $\mr{Ker}(f_{\mr{c}})$. 


Let $Y':=Y_1\times Y_2, G':=G_1\times G_2$, then we have a natural log 1-motive $M'=[Y'\rightarrow G'_{\mr{log}}]$ and a homomorphism $(\alpha_{-1},\alpha_0):M'\rightarrow M$, where $\alpha_{-1}$ is the map $Y_1\times Y_2\rightarrow Y, (y_1,y_2)\mapsto y_1+y_2$, and $\alpha_0$ is the map induced by $\alpha_{\mr{c}}:G_1\times G_2\rightarrow G, (g_1,g_2)\mapsto g_1+g_2$. We claim that $\alpha_{-1}$ is injective and of finite cokernel. Note that $(\gamma_{-1},\gamma_0):=(i^*_{-1}\circ \lambda_{-1}\circ i_{-1},i^*_0\circ\lambda_0\circ i_0)$ is a polarisation on $M_1$ by Lemma \ref{lem2.4}. For $y\in Y_{1\bar{s}}\cap Y_{2\bar{s}}$, $\gamma_{-1}(y)=f_{-1}(y)=0$, hence $0=<0,y>_{1\bar{s}}=<\gamma_{-1}(y),y>_{1\bar{s}}$ implies $y=0$, and $\mr{Ker}(\alpha_{-1})=Y_1\cap Y_2=0$. Hence $\alpha_{-1}$ has finite cokernel due to rank reason.  We also claim that the map $\alpha_{\mr{c}}$ is an isogeny. Let $\gamma_{\mr{c}}:G_1\rightarrow G_1^*$ be the homomorphism corresponding to $\gamma_0$, $\gamma_{\mr{ab}}$ the abelian part of $\gamma_{c}$, and $\gamma_{\mr{t}}$ the torus part of $\gamma_{\mr{c}}$. Since $(\gamma_{-1},\gamma_0)$ is a polarisation, $\gamma_{\mr{ab}}$ is a polarisation and $\gamma_{-1}$ is injective and of finite cokernel. It follows that both $\gamma_{\mr{ab}}$ and $\gamma_{\mr{t}}$ are isogenies. Whence $\gamma_{\mr{c}}=i^*_{\mr{c}}\circ\lambda_{\mr{c}}\circ i_{\mr{c}}$ is also an isogeny. Hence for any $g\in G$, there exists $g_1\in G_1$ such that $i^*_{\mr{c}}\circ\lambda_{\mr{c}}\circ i_{\mr{c}}(g_1)=i^*_{\mr{c}}\circ\lambda_{\mr{c}}(g)$, it follows then $g-i_{\mr{c}}(g_1)\in\mr{Ker}(f_{\mr{c}})$. Since $\mr{Ker}(f_{\mr{c}})/G_2$ is a finite group scheme, the map $\alpha_{\mr{c}}$ is an isogeny. Note that $(\alpha_{-1},\alpha_0)$ induces a homomorphism $\alpha:A_1\times A_2\rightarrow A$ of log abelian varieties. By the equivalence $(1)\Leftrightarrow (4)$ of Proposition \ref{prop2.3}, we deduce that $\alpha$ is an isogeny.
\end{proof}

\begin{rmk}\label{rmk2.3}
Since abelian varieties over a field are always projective, they carry an ample line bundle, hence they are always polarisable. For log abelian varieties over a log point, they admit a polarisation after base change to the algebraic closure of the base field by definition. However it is not clear to the author if they actually carry a polarisation over the base log point. But he does think, over a log point, log abelian variety admitting a polarisation serves as the right counterpart of abelian variety for at least two reasons. Firstly, the canonical 1-parameter log abelian variety degeneration (\cite{zha1}) of an abelian variety transports a polarisation of the generic fibre to the special fibre. In other words the special fibre (which is a log abelian variety over a log point) as the degeneration of the generic fibre (which is an abelian variety over a trivial log point) is necessarily polarisable. Secondly, a polarisation is needed in the proof of Poincar\'e complete reducibility theorem (see Theorem \ref{thm2.2}), and we know that Poincar\'e complete reducibility theorem for abelian varieties plays a very important role in the theory of abelian varieties.
\end{rmk}

\begin{defn}\label{defn2.3}
Let $A$ be a log abelian variety over $S$, a log abelian subvariety of $A$ is a subsheaf of $A$ which is also a log abelian variety. The log abelian variety $A$ is simple if it has no non-zero proper log abelian subvariety. In other words, if $A_1$ is a log abelian variety properly contained in $A$, then $A_1$ is zero. 
\end{defn}

\begin{lem}\label{lem2.7}
Let $f:A\rightarrow A'$ be a non-zero homomorphism between log abelian varieties.
\begin{enumerate}[(1)]
\item If both $A$ and $A'$ are simple, then $f$ is an isogeny.
\item If $f$ is an isogeny, then there exists an isogeny $g:A'\rightarrow A$ and a positive integer $n$ such that $g\circ f=n_{A}$.
\end{enumerate}
\end{lem}
\begin{proof}
We prove part (1) first. Let $A_1$ be the kernel of $f$ in the category of log abelian varieties, see Proposition \ref{prop2.5} for the construction of $A_1$. Since $A$ is simple, $A_1$ has to be zero. Let $F$ be the kernel of $f$ in the category of sheaves of abelian groups, then $F$ lies in $(\mr{fin}/S)_r$ by part (2) of Lemma \ref{lem2.4}. Then $A/F$ gives rise to a log abelian subvariety of $A'$ by part (3) of Proposition \ref{prop2.2}. We must have $A'=A/F$ by the simplicity of $A'$. It follows that $f$ is an isogeny. This shows part (1).

Now we show part (2). Let $n$ be a positive integer which kills $F$. Then there exists an epimorphism $g:A'\rightarrow A$ such that $g\circ f=n_A$. The kernel-cokernel exact sequence gives a short exact sequence $0\rightarrow F\rightarrow A[n]\rightarrow \mr{Ker}(g)\rightarrow 0$. We get $\mr{Ker}(g)\in (\mr{fin}/S)_r$ by part (2) of Proposition \ref{prop2.1}. Hence $g$ is an isogeny.
\end{proof}

\begin{cor}\label{cor2.1}
Let $A$ be a log abelian variety over $S$ admitting a polarisation. Then $A$ is isogenous to a product $A_1^{n_1}\times\cdots \times A_r^{n_r}$, where the $A_i$'s are simple log abelian varieties and not isogenous to each other. The isogeny type of the $A_i$ and the integers $n_i$'s are uniquely determined.
\end{cor}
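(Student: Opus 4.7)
The plan is to prove existence by induction on $g = \dim A$, and uniqueness via a Schur-type lemma for simple log abelian varieties.

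For existence, the case $g = 0$ is trivial. If $A$ is simple then one takes $r = 1$, $n_1 = 1$, $A_1 = A$. Otherwise I pick a non-zero proper log abelian subvariety $B \subsetneq A$ and apply Theorem \ref{thm2.2} to obtain a log abelian subvariety $C \subset A$ with $B \times C$ isogenous to $A$; in particular $\dim B + \dim C = g$. An injective homomorphism between log abelian varieties of the same dimension would have zero kernel and be surjective for the log flat topology by Proposition \ref{prop2.2}, hence be an isomorphism; together with properness and non-vanishing of $B$ this forces $0 < \dim B < g$, and likewise $0 < \dim C < g$.

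To iterate the induction I need polarisations on $B$ and on $C$. Each is the log abelian variety associated to a sub-log-1-motive of the log 1-motive $M=[Y\to G_{\mr{log}}]$ defining $A$, via Theorem \ref{thm1.5} together with the construction in Proposition \ref{prop2.4}; the rank equality $\mr{rank}_{\Z}Y_i=\mr{rank}_{\Z}X_i$ required by Lemma \ref{lem2.4} is automatic, since these sub-log-1-motives come from log abelian subvarieties and are therefore non-degenerate. Lemma \ref{lem2.4} then produces the required polarisations by conjugating $\lambda$ by the inclusions. The inductive hypothesis decomposes $B$ and $C$ into products of simples up to isogeny, and grouping isogenous factors yields $A\sim A_1^{n_1}\times\cdots\times A_r^{n_r}$ with the $A_i$ simple and pairwise non-isogenous.

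Uniqueness rests on a Schur-type lemma: any non-zero homomorphism $f:A\to A'$ between simple log abelian varieties is an isomorphism. Proposition \ref{prop2.4} exhibits $\mr{Ker}(f)$ as a log abelian subvariety of $A$, so simplicity of $A$ together with $f\neq 0$ forces $\mr{Ker}(f)=0$; the sheaf-theoretic image $\mr{Im}(f)\subseteq A'$ is then isomorphic to $A$ as an abstract sheaf, hence is itself a log abelian variety and so a log abelian subvariety of $A'$ in the sense of Definition \ref{defn2.3}; simplicity of $A'$ forces $\mr{Im}(f)=A'$, so $f$ is an isomorphism. With this in hand, a second decomposition $A\sim B_1^{m_1}\times\cdots\times B_s^{m_s}$ produces a composite isogeny $\prod_i A_i^{n_i}\to A\to \prod_j B_j^{m_j}$ whose $(i,j)$-component $A_i\to B_j$ vanishes unless $A_i\sim B_j$; this pins down a bijection between the isogeny classes, and a dimension count on each isotypic block forces $n_i$ to match the corresponding $m_j$.

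I expect the main pitfall to be the Schur-lemma step, specifically confirming that the sheaf-theoretic image of an injective homomorphism of log abelian varieties is itself a log abelian subvariety of the target. I plan to handle this via the equivalence of Theorem \ref{thm1.5}, translating the problem into the category of log 1-motives where the image of an injective morphism is visibly a sub-log-1-motive that inherits pointwise polarisability from the source.
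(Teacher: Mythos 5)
The paper states this corollary without proof, as the standard consequence of Theorem \ref{thm2.2}, and your overall strategy (induction on dimension via complete reducibility for existence, a Schur-type lemma for uniqueness) is exactly the intended one. The existence half of your argument is sound: the dimension bounds $0<\dim B<g$ follow as you say from Proposition \ref{prop2.2}, and the polarisations on the factors do come from Lemma \ref{lem2.4} -- though your stated reason for the rank hypothesis $\mr{rank}_{\Z}Y_1=\mr{rank}_{\Z}X_1$ is off: non-degeneracy alone does not force equal ranks (the pairing lands in $\Gml/\Gm$, not in $\Z$); what gives it is that a log abelian subvariety is by definition itself a log abelian variety, hence pointwise polarisable, and a polarisation includes an injection $Y_1\to X_1$ with finite cokernel.

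The genuine gap is in your Schur lemma. The claim that a non-zero homomorphism $f:A\to A'$ between simple log abelian varieties is an \emph{isomorphism} is false: multiplication by $n$ on a simple $A$ is non-zero with kernel $A[n]\neq 0$. The error is in the deduction ``simplicity of $A$ together with $f\neq 0$ forces $\mr{Ker}(f)=0$'': the kernel produced by Proposition \ref{prop2.4} is the kernel \emph{in the category of log abelian varieties}, i.e.\ the largest log abelian subvariety killed by $f$, not the sheaf-theoretic kernel. Its vanishing only tells you that no positive-dimensional log abelian subvariety is killed by $f$; a non-trivial object of $(\mr{fin}/S)_r$ may still be. The correct statement -- and the one the paper relies on in the proof of Corollary \ref{cor2.2} -- is that a non-zero homomorphism between simple log abelian varieties is an \emph{isogeny}, and this is all your uniqueness argument actually uses: the $(i,j)$-component $A_i\to B_j$ of the composite isogeny still vanishes unless $A_i$ and $B_j$ are isogenous, the block-diagonal structure follows, and the dimension count $n_i\dim A_i=m_j\dim B_j$ with $\dim A_i=\dim B_j$ pins down the multiplicities. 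With that replacement your final worry about whether the sheaf-theoretic image is a log abelian subvariety also evaporates, since an isogeny is surjective for the log flat topology and no image construction is needed.
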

\begin{proof}
If $A$ is simple, there is nothing to prove. Otherwise, by Theorem \ref{thm2.2}, there exists nonzero log abelian subvarieties  $A'$ and $A''$ of $A$ such that $A$ is isogeneous to $A'\times A''$. By Lemma \ref{lem2.4} and Lemma \ref{lem2.6}, both $A'$ and $A''$ admit polarisations. Hence applying Theorem \ref{thm2.2} repeatedly, we have that $A$ is isogenous to a product $A_1^{n_1}\times\cdots \times A_r^{n_r}$ for some simple log abelian varieties $A_i$ and some positive integers $n_i$. By part (1) of Lemma \ref{lem2.7}, such a decomposition is unique up to isogeny.
\end{proof}

\begin{defn}\label{defn2.4}
Let $A,A'$ be two log abelian varieties over $S$, we abbreviate $\mr{Hom}_{S^{\mr{log}}_{\mr{fl}}}(A,A')=\mr{Hom}_{S^{\mr{cl}}_{\mr{\acute{E}t}}}(A,A')$ as $\mr{Hom}(A,A')$. We define $\mr{Hom}^0(A,A')$ as $\mr{Hom}(A,A')\otimes_{\Z}\Q$, and $\mr{End}^0(A)$ as $\mr{End}(A)\otimes_{\Z}\Q=\mr{Hom}(A,A)\otimes_{\Z}\Q$. We define the category $\mr{LAV}^0_S$ of log abelian varieties up to isogeny over $S$, by localising the category $\mr{LAV}_S$ of log abelian varieties over $S$ at the class of isogenies.
\end{defn}

\begin{cor}\label{cor2.2}
Let $A$ be a log abelian variety over $S$ admitting a polarisation. If $A$ is simple, the ring $\mr{End}^0(A)$ is a division ring. In general, if $A$ is isogenous to $A_1^{n_1}\times\cdots\times A_r^{n_r}$ with $A_i$ simple and not isogenous to each other, and $D_i=\mr{End}^0(A_i)$, then $\mr{End}^0(A)=M_{n_1}(D_1)\times\cdots\times M_{n_r}(D_{r})$.
\end{cor}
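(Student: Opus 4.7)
The plan is to run the standard Schur--Wedderburn argument in the isogeny category $\mr{LAV}^0_S$, after decomposing $A$ via Corollary \ref{cor2.1}. The two key tasks are (i) showing that $\mr{End}^0(A_i)$ is a division ring for each simple factor, and (ii) showing that $\mr{Hom}^0(A_i,A_j)=0$ for non-isogenous simples.

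For the simple case, let $0\neq f\in\mr{End}(A)$. By Proposition \ref{prop2.4}, $f$ has a categorical kernel $A_0\subseteq A$ which is itself a log abelian variety. Simplicity of $A$ forces $A_0=0$: otherwise $A_0=A$, $\mr{id}_A$ factors through $A_0$, and $f=0$. Tracing the construction of $A_0$ in the proof of Proposition \ref{prop2.4}, $A_0=0$ implies that $\mr{Ker}(f_{\mr{c}})$ is finite, so $f_{\mr{c}}$ is an isogeny of semi-abelian varieties. Lemma \ref{lem2.2} then gives $\tilde{f}$ surjective with $\mr{Ker}(\tilde{f})=\mr{Ker}(f_{\mr{c}})$, and a snake-lemma chase on diagram (\ref{eq1.9}) produces $\mr{Ker}(f_{-1})=0$ (since $\mr{Ker}(f_{-1})$ embeds into $\mr{Ker}(\tilde{f})\subseteq G$, while $Y\cap G=0$ in $G^{(Y)}_{\mr{log}}$ by non-degeneracy), the surjectivity of $f$, and a short exact sequence $0\to\mr{Ker}(f_{\mr{c}})\to\mr{Ker}(f)\to\mr{Coker}(f_{-1})\to 0$ whose outer terms lie in $(\mr{fin}/S)_c$ ($\mr{Coker}(f_{-1})$ being a finite \'etale group scheme because $f_{-1}\colon Y\to Y$ is an injection of \'etale locally constant free abelian groups of equal rank). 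Hence $\mr{Ker}(f)\in(\mr{fin}/S)_r$ by \cite[Prop. 2.3]{kat4}, and Proposition \ref{prop2.2} identifies $f$ as an isogeny. Since $\mr{Ker}(f)$ is killed by some positive integer $n$, the complementary-isogeny construction (via Theorem \ref{thm2.1} together with the vanishing $\mr{Hom}(A,\mr{Ker}(f))=0$ that follows from surjectivity of $n_A$) produces $g\in\mr{End}(A)$ with $g\circ f=f\circ g=n\cdot\mr{id}_A$; then $g/n$ is the two-sided inverse of $f$ in $\mr{End}^0(A)$, so $\mr{End}^0(A)$ is a division ring.

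For the general case, Corollary \ref{cor2.1} and Definition \ref{defn2.4} give $\mr{End}^0(A)\cong\mr{End}^0\bigl(\prod_iA_i^{n_i}\bigr)$, which by a standard block-matrix computation in an additive category decomposes additively into blocks $M_{n_j,n_i}\bigl(\mr{Hom}^0(A_i,A_j)\bigr)$ with matrix-ring composition. The $i=j$ blocks are $M_{n_i}(D_i)$, so it remains to prove $\mr{Hom}^0(A_i,A_j)=0$ for $i\neq j$. Given a nonzero $\phi\in\mr{Hom}(A_i,A_j)$, simplicity of $A_i$ and Proposition \ref{prop2.4} show that the categorical kernel of $\phi$ in $\mr{LAV}_S$ is zero; the analysis of the first paragraph then yields $\mr{Ker}(\phi_{\mr{c}})$ finite, whence $\dim A_i\leq\dim A_j$. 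Applying the same reasoning dually to $\phi^*\colon A_j^*\to A_i^*$ (using that the duality functor of Proposition \ref{prop1.5} preserves simplicity, a consequence of Theorem \ref{thm2.2}) gives the reverse inequality, so $\dim A_i=\dim A_j$. Then $\phi_{\mr{c}}$ is an isogeny, and the simple-case argument promotes $\phi$ itself to an isogeny $A_i\to A_j$, contradicting $A_i\not\sim A_j$. The main obstacle is precisely this dimension-matching: the snake-lemma machinery of the first paragraph requires $\phi_{\mr{c}}$ to be an isogeny, and securing this for $\phi\colon A_i\to A_j$ between distinct simples demands the dual-side argument and preservation of simplicity under $(-)^*$.
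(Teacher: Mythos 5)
Your argument is correct and follows essentially the same route as the paper: the simple case is handled exactly as in the paper's proof, by observing that a nonzero endomorphism has zero categorical kernel (Proposition \ref{prop2.4}), hence is an isogeny by Proposition \ref{prop2.2}, hence is invertible in $\mr{End}^0(A)$; the general case is the standard block-matrix decomposition over the splitting of Corollary \ref{cor2.1}. The one place where you genuinely go beyond the paper is the vanishing of $\mr{Hom}^0(A_i,A_j)$ for non-isogenous simples: the paper dismisses this ``by the same reason'', whereas, as you correctly point out, that reason only yields that $\phi_{\mr{c}}$ has finite kernel, and Proposition \ref{prop2.2} cannot promote $\phi$ to an isogeny without first knowing $\mr{dim}A_i=\mr{dim}A_j$. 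Your fix via the dual map $\phi^*$ is sound in outline, but the assertion that $(-)^*$ preserves simplicity is itself doing real work: to run it you need (i) that $A_j^*$ again admits a polarisation, so that Theorem \ref{thm2.2} can be applied to it, and (ii) that a simple polarisable log abelian variety is not isogenous to a product of two nonzero ones, so that a proper nonzero log abelian subvariety of $A_j^*$, after applying Theorem \ref{thm2.2} and double duality (Proposition \ref{prop1.5}), contradicts the simplicity of $A_j$. Both are provable with the paper's tools --- (ii), for instance, by taking the categorical kernel of the composite of an isogeny $A\rightarrow B\times C$ with the projection to one factor --- but as written they are asserted rather than proved, so this is the step to flesh out.
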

\begin{proof}
For $A$ simple, let $f$ be a nonzero endomorphism of $A$. Then $f$ is an isogeny by part (1) of Lemma \ref{lem2.7}. By part (2) of Lemma \ref{lem2.7}, $f$ is invertible in the ring $\mr{End}^0(A)$. Hence the ring $\mr{End}^0(A)$ is a division ring. By part (1) of Lemma \ref{lem2.7}, we have $\mr{Hom}^0(A,A')=0$ for two non-isogenous simple log abelian varieties. Hence the second part follows.
\end{proof}

\begin{lem}\label{lem2.8}
The abelian group $\mr{Hom}(A,A')$ is torsion-free.
\end{lem}
\begin{proof}
Let $f\in \mr{Hom}(A,A')$ such that $nf=0$ for some positive integer $n$. Since $0=nf=f\circ n_A$ and $n_A$ is surjective, $f$ must be zero. Hence $\mr{Hom}(A,A')$ is torsion-free.
\end{proof}

\begin{defn}\label{defn2.5}
\begin{enumerate}[(1)]
\item Let $f:A\rightarrow A'$ be an isogeny between two log abelian varieties over $S$. The degree $\mr{deg}(f)$ of $f$ is defined to be the rank of the finite log group object $\mr{Ker}(f)$. By convention, if $f$ is not an isogeny, we let $\mr{deg}(f)=0$.
\item Let $f_{-1}:Y\rightarrow Y'$ be a monomorphism with finite cokernel between two \'etale locally finite rank free constant sheaf, the degree $\mr{deg}(f_{-1})$ of $f_{-1}$ is defined to be the determinant of $f_{-1}$. By convention, if $f$ is not injective of finite cokernel, we let $\mr{deg}(f)=0$.
\item Let $f_{\mr{c}}:G\rightarrow G'$ be an isogeny between semi-abelian varieties, the degree $\mr{deg}(f_{\mr{c}})$ of $f_{\mr{c}}$ is defined to be the rank of the finite group scheme $\mr{Ker}(f_{\mr{c}})$. By convention, if $f_{\mr{c}}$ is not an isogeny, we let $\mr{deg}(f_{\mr{c}})=0$.
\end{enumerate}
\end{defn}

\begin{lem}\label{lem2.9}
Let $f:A\rightarrow A$ be a homomorphism between two log abelian varieties over $S$. Let $f_{-1}:Y\rightarrow Y$ and $f_{\mr{c}}:G\rightarrow G$ be the homomorphisms induced by $f$ as in Proposition \ref{prop1.6}, and let $f_{\mr{t}}:T\rightarrow T$ and $f_{\mr{ab}}:B\rightarrow B$ be the homomorphisms induced by $f_{\mr{c}}$ on torus parts and abelian variety parts respectively. Then:
\begin{enumerate}[(1)]
\item $\mr{deg}(f)=\mr{deg}(f_{-1})\mr{deg}(f_{\mr{t}})\mr{deg}(f_{\mr{ab}})$;
\item let $g:A\rightarrow A$ be another homomorphism, and let $h=f+g$, then $h_{-1}=f_{-1}+g_{-1}$, $h_{\mr{t}}=f_{\mr{t}}+g_{\mr{t}}$ and $h_{\mr{ab}}=f_{\mr{ab}}+g_{\mr{ab}}$.
\end{enumerate} 
\end{lem}
\begin{proof}
Part (2) is obvious. We only need to show part (1). 

If $f$ is not an isogeny, then $\mr{deg}(f_{-1})\mr{deg}(f_{\mr{t}})\mr{deg}(f_{\mr{ab}})=0=\mr{deg}(f)$ by Proposition \ref{prop2.3}. Now we suppose that $f$ is an isogeny, so are $f_{\mr{c}}$, $f_{\mr{t}}$ and $f_{\mr{ab}}$.  Also we have $f_{-1}$ is injective and of finite cokernel. By diagram (\ref{eq1.10}), we get a short exact sequence $$0\rightarrow \mr{Ker}(f_{\mr{c}})\rightarrow \mr{Ker}(f)\rightarrow \mr{Ker}(f_{\mr{d}})\rightarrow 0.$$ Similarly, we have another short exact sequence $$0\rightarrow \mr{Ker}(f_{\mr{t}})\rightarrow \mr{Ker}(f_{\mr{c}})\rightarrow \mr{Ker}(f_{\mr{ab}})\rightarrow 0.$$
By diagram (\ref{eq1.12}), we get $\mr{Ker}(f_{\mr{d}})\cong \mr{Coker}(f_{-1})$. Then $$\mr{deg}(f)=\mr{deg}(f_{-1})\mr{deg}(f_{\mr{t}})\mr{deg}(f_{\mr{ab}}).$$ 
\end{proof}

\begin{thm}\label{thm2.3}
The function $f\mapsto \mr{deg}(f)$ on $\mr{End}(A)$ extends to a homogeneous polynomial function of degree $2g$ on $\mr{End}^0(A)$, where $g$ is the dimension of $A$.
\end{thm}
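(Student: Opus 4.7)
The plan is to use Lemma \ref{lem2.7}(1) to split the degree into three factors, each of which extends to a polynomial function of the appropriate degree on $\mr{End}^0(A)$. By Lemma \ref{lem2.7}(2), the assignments $f\mapsto f_{-1}$, $f\mapsto f_{\mr{t}}$ and $f\mapsto f_{\mr{ab}}$ are additive, hence induce $\Q$-linear maps $\mr{End}^0(A)\to\mr{End}^0(Y),\,\mr{End}^0(T),\,\mr{End}^0(B)$. Write $s:=\dim T=\mr{rank}_\Z X=\mr{rank}_\Z Y$ (the equality coming from non-degeneracy of the pairing together with polarisability), so that $\dim B=g-s$.

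For the abelian part, the classical theorem for abelian varieties gives a homogeneous polynomial $P_B:\mr{End}^0(B)\to\Q$ of degree $2\dim B=2(g-s)$ extending $\deg$. For the toric-lattice part, let $f_{\mr{l}}\in\mr{End}(X)$ be the map dual to $f_{\mr{t}}$. The compatibility $\langle f_{\mr{l}}(x),y\rangle=\langle x,f_{-1}(y)\rangle$ built into the construction of $f_{\mr{l}}$ combined with pointwise polarisability yields a $\Q$-valued non-degenerate bilinear form on $X_\Q\times Y_\Q$ (obtained by composing $\langle,\rangle$ with an appropriate $\Q$-linear functional on $\Gml/\Gm$ afforded by a polarisation, passing to $\bar k$ if necessary). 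With respect to this form, $f_{\mr{l}}$ and $f_{-1}$ are adjoint linear operators on isomorphic $\Q$-vector spaces, whence $\det f_{\mr{l}}=\det f_{-1}$. Therefore $\deg(f_{-1})\deg(f_{\mr{t}})=|\det f_{-1}|\cdot|\det f_{\mr{l}}|=(\det f_{-1})^2$ is a homogeneous polynomial of degree $2s$ in $f_{-1}$.

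Setting $\widetilde{\deg}(f):=(\det f_{-1})^2\cdot P_B(f_{\mr{ab}})$ then produces a homogeneous polynomial of degree $2s+2(g-s)=2g$ on $\mr{End}^0(A)$. To check $\widetilde{\deg}|_{\mr{End}(A)}=\deg$: when $f$ is an isogeny, both sides agree by Lemma \ref{lem2.7}(1); when $f$ is not an isogeny, at least one of $f_{-1}$, $f_{\mr{t}}$, $f_{\mr{ab}}$ fails the relevant injectivity-with-finite-cokernel or isogeny condition, forcing the corresponding factor on the right (and $\deg(f)$ on the left by Definition \ref{defn2.5}) to vanish. The main obstacle is the identity $\det f_{\mr{l}}=\det f_{-1}$: it requires extracting a genuine $\Q$-valued non-degenerate pairing from the log-valued $\langle,\rangle$, which is precisely the point at which polarisability of $A$ is used.
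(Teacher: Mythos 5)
Your proposal is correct and rests on the same skeleton as the paper's proof --- the three-factor decomposition of Lemma \ref{lem2.7} and Mumford's theorem for the abelian part --- but it finishes differently. The paper takes the standard route of showing that $n\mapsto \mr{deg}(nf+g)$ is a polynomial in $n$ for each of the three factors separately, treating $\mr{deg}(nf_{-1}+g_{-1})$ as a determinant and reducing the torus factor to the lattice factor via character groups; it never needs to compare $\det f_{\mr{l}}$ with $\det f_{-1}$. You instead write down a closed-form candidate $(\det f_{-1})^2\cdot P_B(f_{\mr{ab}})$, which requires the extra identity $\det f_{\mr{l}}=\det f_{-1}$ and hence the extraction of a non-degenerate $\Q$-valued pairing from the $\Gml/\Gm$-valued one. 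That step is genuine work the paper avoids, and you should make it concrete: the pairing takes values in the stalk $P^{\mr{gp}}$, a polarisation gives $<\lambda_{-1}(y),y>\in P\setminus\{1\}$ for $y\neq 0$, and a local homomorphism $\ell:P\rightarrow\N$ (which exists since $P$ is sharp fs) turns $\ell<\lambda_{-1}(-),->$ into a positive-definite, hence non-degenerate, form on $Y_\Q$, so $\ell<,>$ is perfect on $X_\Q\times Y_\Q$ and $f_{\mr{l}}$ is the transpose of $f_{-1}$. What your detour buys is worth noting: the product $\mr{deg}(f_{-1})\mr{deg}(f_{\mr{t}})=|\det f_{-1}|\cdot|\det f_{\mr{l}}|$ becomes the manifestly polynomial $(\det f_{-1})^2$, which cleanly sidesteps the sign issue that $|\det(nf_{-1}+g_{-1})|$ by itself need not agree with a polynomial in $n$ (the paper's Definition \ref{defn2.5}(2) takes the signed determinant, which quietly glosses over this, since Lemma \ref{lem2.7}(1) equates a product involving it with a positive rank). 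Your consistency check for non-isogenies is also sound, since $\det f_{\mr{l}}=\det f_{-1}$ shows the lattice and torus factors vanish together. The only points to tighten are the explicit construction of $\ell$ just described and the remark that one may base change to $\bar{k}$ to obtain the polarisation without affecting any of the degrees or determinants involved.
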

\begin{proof}
Since for any $f\in \mr{End}(A)$ and $n\in\Z$, $$\mr{deg}(nf)=\mr{deg}(n_A)\cdot \mr{deg}(f)=n^{2g}\cdot \mr{deg}(f),$$
it suffices to show that for $f,g\in \mr{End}(A)$, the function $P(n)=\mr{deg}(nf+g)$ is a polynomial function. By Lemma \ref{lem2.9}, we are reduced to show the functions $\mr{deg}(nf_{\mr{t}}+g_{\mr{t}})$, $\mr{deg}(nf_{\mr{ab}}+g_{\mr{ab}})$ and $\mr{deg}(nf_{-1}+g_{-1})$ are all polynomial functions. The case for $\mr{deg}(nf_{\mr{ab}}+g_{\mr{ab}})$ is a standard result for abelian varieties, see \cite[\S 19, Thm. 2]{mum2}. And $\mr{deg}(nf_{-1}+g_{-1})$ as a determinant function is clearly a polynomial function. The case for $\mr{deg}(nf_{\mr{t}}+g_{\mr{t}})$ is reduced to the case for $\mr{deg}(nf_{-1}+g_{-1})$ by taking the character groups of the tori.
\end{proof}

\begin{defn}\label{defn2.6}
Let $l$ be a prime number which is coprime to the characteristic of $k$. The $l$-adic Tate module of $A$ is defined to be 
$$T_l(A)_{\bar{s}(\mr{k\acute{e}t})}=\varprojlim_n A[l^n]_{\bar{s}(\mr{k\acute{e}t})},$$
where $\bar{s}(\mr{k\acute{e}t})$ denotes a log geometric point of $S$ for the log \'etale topology. Here we use the notation $\bar{s}(\mr{k\acute{e}t})$ for the sake of coherence with \cite[18.9]{k-k-n4}, and the log \'etale topology is called the Kummer \'etale topology there.
\end{defn}

Let $\pi_1^{\mr{log}}$ be the log fundamental group of $S$. By Proposition \ref{prop2.4}, we have $T_l(A)_{\bar{s}(\mr{k\acute{e}t})}$ is a free $\Z_l$-module of rank $2g$ endowed with a continuous $\pi_1^{\mr{log}}$-action. Any homomorphism $f:A\rightarrow A'$ induces a homomorphism $$T_l(f):T_l(A)_{\bar{s}(\mr{k\acute{e}t})}\rightarrow T_l(A')_{\bar{s}(\mr{k\acute{e}t})}$$
which is $\pi_1^{\mr{log}}$-equivariant. It follows that we have a functor
$$T_l:\mr{LAV}_S\longrightarrow (\pi_1^{\mr{log}},\Z_l)\mr{-Mod}$$
from the category of log abelian varieties over the log point $S$ to the category of finite rank $\Z_l$-modules with continuous $\pi_1^{\mr{log}}$-action. In particular, the functor $T_l$ gives rise to a homomorphism 
$$\mr{Hom}(A,A')\rightarrow \mr{Hom}_{(\pi_1^{\mr{log}},\Z_l)\mr{-Mod}}(T_l(A)_{\bar{s}(\mr{k\acute{e}t})},T_l(A')_{\bar{s}(\mr{k\acute{e}t})}).$$ 
The latter is clearly a $\Z_l$-submodule of $\mr{Hom}_{\Z_l}(T_l(A)_{\bar{s}(\mr{k\acute{e}t})},T_l(A')_{\bar{s}(\mr{k\acute{e}t})})$ which is of finite $\Z_l$-rank. Moreover, we have the following canonical homomorphism
$$T_l:\mr{Hom}(A,A')\otimes_{\Z}\Z_l\rightarrow \mr{Hom}_{\Z_l}(T_l(A)_{\bar{s}(\mr{k\acute{e}t})},T_l(A')_{\bar{s}(\mr{k\acute{e}t})}).$$
We are going to use this map to investigate the finiteness of $\mr{Hom}(A,A')$.

\begin{thm}\label{thm2.4}
For $A,A'$ two log abelian varieties over $S$ admitting a polarisation, $\mr{Hom}(A,A')$ is a finitely generated free abelian group, and the canonical map $$T_l:\mr{Hom}(A,A')\otimes_{\Z}\Z_l\rightarrow \mr{Hom}_{\Z_l}(T_l(A)_{\bar{s}(\mr{k\acute{e}t})},T_l(A')_{\bar{s}(\mr{k\acute{e}t})})$$ is injective, where $l$ is a prime number different from the characteristic of $k$.
\end{thm}
\begin{proof}
We have already proven the degree function on $\mr{End}^0(A)$ is a homogeneous polynomial function of degree $2g$ in Theorem \ref{thm2.3}. Now the proof of \cite[\S 19, Thm. 3]{mum2} works verbatim here.
\end{proof}

\begin{cor}\label{cor2.3}
Let $A,A'$ be two log abelian varieties over $S$. Then the canonical map $$T_l:\mr{Hom}(A,A')\otimes_{\Z}\Z_l\rightarrow \mr{Hom}_{\Z_l}(T_l(A)_{\bar{s}(\mr{k\acute{e}t})},T_l(A')_{\bar{s}(\mr{k\acute{e}t})})$$ is injective.
\end{cor}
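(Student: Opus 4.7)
My plan is to reduce Corollary \ref{cor2.3} to Theorem \ref{thm2.4} by passing to a base change where the log abelian varieties acquire polarisations. Let $\bar{k}$ be an algebraic closure of $k$ and let $\bar{S}$ be the fs log point with underlying scheme $\mathrm{Spec}\,\bar{k}$ and log structure induced from $S$ via the composition $P\to k \to \bar{k}$. The defining condition of a log abelian variety---namely pointwise polarisability of the defining log $1$-motive, as in \cite[Def.~3.3~(1)]{k-k-n2}---guarantees that the pullbacks $A_{\bar{S}}$ and $A'_{\bar{S}}$ admit polarisations, so that Theorem \ref{thm2.4} applies to the pair $(A_{\bar{S}},A'_{\bar{S}})$.

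Next I would assemble the obvious base-change square whose horizontal arrows are given by $T_l$ and whose vertical arrows are pullback along $\bar{S}\to S$. The right vertical arrow is an equality: since a log geometric point $\bar{s}(\mathrm{k\acute{e}t})\to S$ may be arranged to factor through $\bar{S}$, the stalks $A[l^n]_{\bar{s}(\mathrm{k\acute{e}t})}$ and $A_{\bar{S}}[l^n]_{\bar{s}(\mathrm{k\acute{e}t})}$ coincide, and similarly for $A'$, so the Hom groups between the two Tate modules on both sides agree. The bottom arrow is injective by Theorem \ref{thm2.4}, so the commutativity of the square reduces the question to proving injectivity of the left vertical map $\mathrm{Hom}(A,A')\otimes_{\Z}\Z_l \to \mathrm{Hom}(A_{\bar{S}},A'_{\bar{S}})\otimes_{\Z}\Z_l$.

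For the last step, flatness of $\Z_l$ over $\Z$ further reduces matters to the injectivity of $\mathrm{Hom}(A,A')\to \mathrm{Hom}(A_{\bar{S}},A'_{\bar{S}})$, which I would obtain by faithfully flat descent: the morphism $\bar{S}\to S$ is strict, and its underlying scheme morphism $\mathrm{Spec}\,\bar{k}\to \mathrm{Spec}\,k$ is faithfully flat, so $\bar{S}\to S$ is a log flat cover; by the sheaf property on $S^{\mathrm{log}}_{\mathrm{fl}}$ any morphism of sheaves whose pullback to $\bar{S}$ vanishes must already vanish on $S$. The main point requiring care is the identification of Tate modules across the base change and the choice of log geometric point compatible with $\bar{S}\to S$; once this bookkeeping is set up so that the log geometric point factors through $\bar{S}$, the remaining compatibility of sheaf stalks with pullback is routine, and the rest of the argument is essentially formal.
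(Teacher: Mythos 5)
Your argument is essentially the paper's: the printed proof of Corollary \ref{cor2.3} is only the phrase ``this follows from Theorem \ref{thm2.4}'', and the intended content is exactly the reduction you spell out --- base change to the algebraic closure where polarisations exist (cf.\ Remark \ref{rmk2.3}), invariance of the Tate modules under this base change since the log geometric point factors through $\bar{S}$, and injectivity of $\mathrm{Hom}(A,A')\otimes_{\Z}\Z_l\rightarrow \mathrm{Hom}(A_{\bar{S}},A'_{\bar{S}})\otimes_{\Z}\Z_l$. One small repair: when $\bar{k}/k$ is infinite, $\bar{S}\rightarrow S$ is not a covering in $S^{\mathrm{log}}_{\mathrm{fl}}$ (coverings there are required to be locally of finite presentation), so the last injectivity should instead be obtained either by passing to a finite subextension $k'/k$ over which polarisations of $A$ and $A'$ are already defined, or by noting via Theorem \ref{thm1.5} that $f$ is determined by a morphism of log $1$-motives, whose vanishing can be tested after the fpqc base change $\mathrm{Spec}\,\bar{k}\rightarrow \mathrm{Spec}\,k$.
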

\begin{proof}
This follows from Theorem \ref{thm2.4}.
\end{proof}

\begin{cor}\label{cor2.4}
Let $A,A'$ be two log abelian varieties over $S$. Then we have $\mr{Hom}(A,A')\cong\Z^r$ with $r\leq 4\mr{dim}A\cdot\mr{dim}A'$.
\end{cor}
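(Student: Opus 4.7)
The statement has two components: $\mr{Hom}(A,A')$ is finitely generated over $\Z$, and its rank is at most $4\mr{dim}A\cdot\mr{dim}A'$. By Lemma \ref{lem2.6}, $\mr{Hom}(A,A')$ is already torsion-free, so once these two properties are established, the structure theorem for finitely generated abelian groups yields $\mr{Hom}(A,A')\cong\Z^r$. My plan is to extract the rank bound from Corollary \ref{cor2.3}, and then secure finite generation by reducing to the endomorphism ring of $A\times A'$ and invoking Theorem \ref{thm2.3} together with Corollary \ref{cor2.2}.

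For the rank bound, Corollary \ref{cor2.3} realises $\mr{Hom}(A,A')\otimes_{\Z}\Z_l$ as a $\Z_l$-submodule of $\mr{Hom}_{\Z_l}(T_l(A)_{\bar{s}(\mr{k\acute{e}t})},T_l(A')_{\bar{s}(\mr{k\acute{e}t})})$, and by Proposition \ref{prop2.3} the latter is $\Z_l$-free of rank $4\mr{dim}A\cdot\mr{dim}A'$. Since $\Z_l$ is Noetherian, $\mr{Hom}(A,A')\otimes\Z_l$ is finitely generated and, being torsion-free, $\Z_l$-free of some rank $r\leq 4\mr{dim}A\cdot\mr{dim}A'$. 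Extending scalars to $\Q_l$ and comparing with $(\mr{Hom}(A,A')\otimes\Q)\otimes_{\Q}\Q_l$ then gives $\dim_{\Q}(\mr{Hom}(A,A')\otimes\Q)=r$, independent of $l$.

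For finite generation I reduce to the endomorphism ring of $A\times A'$ via the canonical direct-sum decomposition
$$\mr{End}(A\times A')=\mr{End}(A)\oplus\mr{Hom}(A,A')\oplus\mr{Hom}(A',A)\oplus\mr{End}(A'),$$
which exhibits $\mr{Hom}(A,A')$ as a $\Z$-direct summand of $\mr{End}(A\times A')$; it therefore suffices to prove that $\mr{End}(A\times A')$ is finitely generated over $\Z$. I would then apply Theorem \ref{thm2.3} to the log abelian variety $A\times A'$: the degree function extends to a homogeneous polynomial of degree $2\mr{dim}(A\times A')$ on $\mr{End}^0(A\times A')$, and combining this polynomiality with the matrix realisation via $T_l$ forces the characteristic polynomial $P_\alpha(t)=\mr{deg}(t-\alpha)$ of each $\alpha\in\mr{End}(A\times A')$ to lie in $\Z_l[t]$ for every $l\neq\mr{char}\,k$ while taking integer values at integers, whence $P_\alpha\in\Z[t]$ (in characteristic zero) and $\alpha$ is integral over $\Z$.

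Assuming $A\times A'$ admits a polarisation (which at worst holds after base change to $\bar{k}$), Corollary \ref{cor2.2} identifies $\mr{End}^0(A\times A')$ with a finite product of matrix algebras over division rings, hence a finite-dimensional semisimple $\Q$-algebra on which the reduced trace form is non-degenerate; integrality then sandwiches $\mr{End}(A\times A')$ inside its trace dual, a finitely generated $\Z$-lattice, and so $\mr{End}(A\times A')$ itself is finitely generated over $\Z$. Pulling out the direct summand $\mr{Hom}(A,A')$ completes the proof. The main obstacle is exactly this last step: Corollary \ref{cor2.3} alone is insufficient, as the torsion-free rank-one group $M=\sum_{q\text{ prime}}\frac{1}{q}\Z\subset\Q$ has $M\otimes\Z_l\cong\frac{1}{l}\Z_l$ finitely generated for every $l$ yet is not finitely generated over $\Z$; thus one genuinely needs the integrality from the polynomial degree function (Theorem \ref{thm2.3}) together with the semisimplicity of $\mr{End}^0$ from Poincar\'e reducibility (Corollary \ref{cor2.2}).
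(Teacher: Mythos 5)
Your overall strategy is half right, and the first half is fine: Corollary \ref{cor2.3} embeds $\mathrm{Hom}(A,A')\otimes_{\Z}\Z_l$ into the free $\Z_l$-module $\mathrm{Hom}_{\Z_l}(T_l(A),T_l(A'))$ of rank $4\dim A\cdot\dim A'$, which bounds the rank, and your example $\sum_q\frac{1}{q}\Z$ correctly shows that injectivity of $T_l$ plus torsion-freeness cannot by themselves give finite generation. The gap is exactly at the step you identify as the main obstacle. Your finite-generation argument hinges on the claim that $P_\alpha(t)=\deg(t\cdot 1-\alpha)$ lies in $\Z_l[t]$ for every $l\neq\mathrm{char}\,k$, which you assert is ``forced'' by combining Theorem \ref{thm2.3} with the matrix realisation via $T_l$. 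Nothing in the paper forces this: the only route to it is the identity $\deg(f)=\det T_l(f)$ (the analogue of \cite[\S 19, Thm.\ 4]{mum2}), which in turn requires knowing that the $l$-primary rank of $\mathrm{Ker}(f)$, as an object of $(\mathrm{fin}/S)_r$, equals $|\det T_l(f)|_l^{-1}$ --- a nontrivial statement proved nowhere in the paper and not a consequence of Theorem \ref{thm2.3}. Without it, Theorem \ref{thm2.3} only tells you that $P_\alpha$ is a monic polynomial of degree $2\dim(A\times A')$ with rational coefficients taking integer values on $\Z$, and such polynomials need not lie in $\Z[t]$ (e.g.\ $t(t-1)(t-2)+\binom{t}{2}$ is monic and integer-valued but has coefficient $-\tfrac52$), so integrality of $\alpha$ over $\Z$ does not follow. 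You also concede that even granting the $\Z_l[t]$ claim your conclusion only works in characteristic zero, whereas $k$ is arbitrary.

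The route the paper intends --- the one packaged into the ``verbatim'' citation of \cite[\S 19, Thm.\ 3]{mum2} in the proof of Theorem \ref{thm2.4} --- obtains finite generation without any integrality. For $A$ and $A'$ simple, every nonzero homomorphism is an isogeny and hence has degree at least $1$; by Theorem \ref{thm2.3} the degree is a polynomial, hence continuous, function on the finite-dimensional real vector space $\mathrm{Hom}^0(A,A')\otimes_{\Q}\R$ (finite-dimensionality being exactly your step with the $l$-adic rank bound) vanishing at $0$, so $\mathrm{Hom}(A,A')$ is discrete there and therefore finitely generated. Corollary \ref{cor2.1} then reduces the general case to the simple one, since isogenies $A\sim\prod_iA_i^{n_i}$ and $A'\sim\prod_jA_j'^{m_j}$ identify $\mathrm{Hom}(A,A')$ with a subgroup of $\frac{1}{N}\bigoplus_{i,j}\mathrm{Hom}(A_i,A_j')^{n_im_j}$ for a suitable integer $N$. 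If you replace your integrality-plus-trace-form step by this discreteness argument, the remainder of your write-up (torsion-freeness from Lemma \ref{lem2.6} and the rank bound from Corollary \ref{cor2.3}) goes through unchanged.
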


\begin{cor}\label{cor2.5}
Let $A$ be a log abelian variety over $S$ admitting a polarisation. Then $\mr{End}^0(A)$ is a finite-dimensional semisimple algebra over $\Q$.
\end{cor}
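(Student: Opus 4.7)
The plan is to combine the Poincar\'e complete reducibility theorem (Theorem \ref{thm2.2}) with the structural description of endomorphism rings (Corollary \ref{cor2.2}) and the finiteness of $\mr{Hom}$ groups (Corollary \ref{cor2.4}). Since $A$ admits a polarisation, Theorem \ref{thm2.2} furnishes an isogeny $A\sim A_1^{n_1}\times\cdots\times A_r^{n_r}$ with each $A_i$ a simple log abelian variety over $S$ and with the $A_i$ pairwise non-isogenous. Because the endomorphism algebra is an isogeny invariant (the localisation category $\mr{LAV}^0_S$ of Definition \ref{defn2.4} is precisely designed for this), we may compute $\mr{End}^0(A)$ using the right-hand product.

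First I would invoke Corollary \ref{cor2.2} to obtain the identification
\[
\mr{End}^0(A)\;\cong\; M_{n_1}(D_1)\times\cdots\times M_{n_r}(D_r),
\]
where $D_i:=\mr{End}^0(A_i)$ is a division ring over $\Q$. It remains to verify that this ring is finite-dimensional over $\Q$ and that it is semisimple.

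For finite-dimensionality, I would apply Corollary \ref{cor2.4} to each simple factor: it gives $\mr{End}(A_i)\cong \Z^{r_i}$ for some $r_i\le 4(\mr{dim}A_i)^2$, so that $D_i=\mr{End}(A_i)\otimes_{\Z}\Q$ is a finite-dimensional $\Q$-vector space. Hence each matrix algebra $M_{n_i}(D_i)$ is finite-dimensional over $\Q$, and so is their finite product $\mr{End}^0(A)$.

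For semisimplicity, I would simply note that each factor $M_{n_i}(D_i)$ is a simple Artinian ring (matrix algebra over a division ring), so the finite product on the right-hand side is semisimple in the sense of Wedderburn. No genuine obstacle arises in this argument: every ingredient has already been established in the preceding results, and the corollary is essentially a formal consequence of Theorem \ref{thm2.2}, Corollary \ref{cor2.2}, and Corollary \ref{cor2.4} together with Wedderburn's theorem.
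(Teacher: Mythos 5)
Your proposal is correct and follows essentially the same route as the paper, whose proof is precisely ``this follows from Corollary \ref{cor2.2} and Corollary \ref{cor2.4}''; you have simply spelled out the Wedderburn-type argument that the paper leaves implicit. The only cosmetic difference is that you cite Theorem \ref{thm2.2} for the isogeny decomposition where the paper's Corollary \ref{cor2.1} is the statement formally tailored to that purpose.
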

\begin{proof}
This follows from Corollary \ref{cor2.2} and Corollary \ref{cor2.4}.
\end{proof}

\appendix
\section{Appendix: Log finite flat group schemes}
Since the theory of log finite flat group schemes is not well-known, we collect some results about them in this appendix. These results are all due to Kato, and the main references are \cite{kat4} and \cite{mad1}. 

Let $S$ be an fs log scheme. We recall several kinds of finite group objects on $S^{\mr{log}}_{\mr{fl}}$ defined by Kato.

\begin{defn}\label{defn-app.1}
The category $(\mr{fin}/S)_c$ is the full subcategory of the category of sheaves of finite abelian groups over $S^{\mr{log}}_{\mr{fl}}$ consisting of objects which are representable by a classical finite flat group scheme over $S$. Here classical means the log structure of the representing log scheme is the one induced from $S$. 

The category $(\mr{fin}/S)_f$ is the full subcategory of the category of sheaves of finite abelian groups over $S^{\mr{log}}_{\mr{fl}}$ consisting of objects which are representable by a classical finite flat group scheme over a log flat cover of $S$. For $F\in (\mr{fin}/S)_f$, let $U\rightarrow S$ be a log flat cover of $S$ such that $F_U:=F\times_S U\in (\mr{fin}/S)_c$, then the rank of $F$ is defined to be  the rank of $F_U$ over $U$.

The category $(\mr{fin}/S)_r$ is the full subcategory of $(\mr{fin}/S)_f$ consisting of objects which are representable by a log scheme over $S$.

Let $F\in (\mr{fin}/S)_f$, the Cartier dual of $F$ is the sheaf $F^*:=\mc{H}om_{S^{\mr{log}}_{\mr{fl}}}(F,\Gm)$. By the definition of $(\mr{fin}/S)_f$, it is clear that $F^*\in (\mr{fin}/S)_f$.

The category $(\mr{fin}/S)_d$ is the full subcategory of $(\mr{fin}/S)_r$ consisting of objects whose Cartier duals also lie in $(\mr{fin}/S)_r$.
\end{defn}

\subsection{Logarithmic fundamental group}
We have the following well-known theorem.

\begin{thm}\label{thm-app.1}
Let $\mathring{S}$ be a locally noetherian connected scheme. Let $\mr{fin}^{\mr{et}}_{\mathring{S}}$ denote the category of finite \'etale group schemes over $\mathring{S}$, $\mr{LC}(\mathring{S}_{\mr{\acute{E}t}})$ (resp. $\mr{LC}(\mathring{S}_{\mr{fl}})$) the category of locally constant sheaves of finite abelian groups for the \'etale (resp. flat) topology, and $\pi_1(\mathring{S})\mr{-fMod}$ the category of finite abelian groups endowed with a continuous $\pi_1(\mathring{S})$-action.
\begin{enumerate}[(1)]
\item By the theory of fundamental group, there are equivalences of categories:
$$\mr{fin}^{\mr{et}}_{\mathring{S}}\xrightarrow{\cong} \mr{LC}(\mathring{S}_{\mr{\acute{E}t}})\xrightarrow{\cong} \pi_1(\mathring{S})\mr{-fMod}.$$
\item By flat descent, we further have an equivalence
$$\mr{LC}(\mathring{S}_{\mr{\acute{E}t}})\xrightarrow{\cong}\mr{LC}(\mathring{S}_{\mr{fl}}).$$ 
\end{enumerate}
\end{thm}

For the theory of logarithmic fundamental group, we have the following analogue of Theorem \ref{thm-app.1} which is due to Kato.

\begin{thm}\label{thm-app.2}
Let $S$ be an fs log scheme with underlying scheme locally noetherian and connected. Let $\pi_1^{\mr{log}}(S)$ be the logarithmic fundamental group of $S$, see \cite[4.6]{ill1} for its definition. Let $\mr{fin}^{\mr{ket}}_{S}$ be the subcategory of $(\mr{fin}/S)_r$ consisting of objects which are Kummer log \'etale over $S$, $\mr{LC}(S^{\mr{log}}_{\mr{\acute{E}t}})$ (resp. $\mr{LC}(S^{\mr{log}}_{\mr{fl}})$) the category of locally constant sheaves of finite abelian groups on $S^{\mr{log}}_{\mr{\acute{E}t}}$ (resp. $S^{\mr{log}}_{\mr{fl}}$), and $\pi_1^{\mr{log}}(S)\mr{-fMod}$ the category of finite abelian groups endowed with a continuous $\pi_1^{\mr{log}}(S)$-action.
Then we have the following equivalences of categories:
\begin{enumerate}[(1)]
\item $$\mr{fin}^{\mr{ket}}_{S}\xrightarrow{\cong} \mr{LC}(S^{\mr{log}}_{\mr{\acute{E}t}})\xrightarrow{\cong} \pi_1^{\mr{log}}(S)\mr{-fMod};$$
\item $$\mr{LC}(S^{\mr{log}}_{\mr{\acute{E}t}})\xrightarrow{\cong}\mr{LC}(S^{\mr{log}}_{\mr{fl}}).$$ 
\end{enumerate}
\end{thm}
\begin{proof}
For part (1), see \cite[\S4]{ill1}. For part (2), see \cite[Thm. 1.4.5. (2)]{mad1}.
\end{proof}

\subsection{Structure of log finite flat group schemes}
\begin{prop}\label{prop-app.1}
Suppose that the underlying scheme of $S$ is locally noetherian. Then the category $(\mr{fin}/S)_f$ (resp. $(\mr{fin}/S)_r$, resp. $(\mr{fin}/S)_d$) is closed under extensions in the category of sheaves of abelian groups on $S^{\mr{log}}_{\mr{fl}}$.
\end{prop}
\begin{proof}
See \cite[Prop. 2.3]{kat4}. 
\end{proof}

\begin{lem}\label{lem-app.1}
Assume that the underlying scheme of $S$ is the spectrum of a henselian local ring. Let $F\in (\mr{fin}/S)_f$, then there is a unique short exact sequence 
\begin{equation}\label{eq-app.1}
0\rightarrow F^{\circ}\rightarrow F\rightarrow F^{\mr{et}}\rightarrow 0
\end{equation} 
in $(\mr{fin}/S)_f$, such that over any log flat cover $S'\rightarrow S$ with $F_{S'}\in (\mr{fin}/S')_c$, this sequence restricts to the classical connected-\'etale sequence.
\end{lem}
\begin{proof}
See \cite[2.6]{kat4}, or \cite[Lem. 2.1.6]{mad1}.
\end{proof}

\begin{lem}\label{lem-app.2}
Let the assumption be as in Lemma \ref{lem-app.1}. Assume further that $F$ lies in $(\mr{fin}/S)_r$ and its underlying scheme is connected. Then $F$ actually lies in $(\mr{fin}/S)_c$.
\end{lem}
\begin{proof}
See \cite[Prop. 2.1.7]{mad1}, see also \cite[Lem. 2.8]{kat4} for the noetherian strict henselian case.
\end{proof}

\begin{prop}\label{prop-app.2}
Let the notation and the assumption be as in Lemma \ref{lem-app.1}. We further assume that the underlying scheme is noetherian.
\begin{enumerate}[(1)]
\item The sheaf $F^{\mr{et}}$ lies in $\mr{fin}^{\mr{ket}}_{S}$, in particular it lies in $(\mr{fin}/S)_r$.
\item The sheaf $F$ lies in $(\mr{fin}/S)_r$ if and only if $F^{\circ}$ lies in $(\mr{fin}/S)_c$.
\end{enumerate}
\end{prop}
\begin{proof}
Part (1) follows from Theorem \ref{thm-app.2}.

Suppose that $F^{\circ}\in(\mr{fin}/S)_c$, then $F$ as an $F^{\circ}$-torsor over $F^{\mr{et}}$ is representable by \cite[Thm. 9.1]{kat2}. Hence $F\in (\mr{fin}/S)_r$. This proves one direction of part (2).

Conversely suppose that $F\in (\mr{fin}/S)_r$, we want to show that $F^{\circ}\in (\mr{fin}/S)_c$. As the kernel of $F\rightarrow F^{\mr{et}}$, $F^{\circ}$ is representable, hence lies in $(\mr{fin}/S)_r$. We claim that the underlying scheme of $F^{\circ}$ is connected. Taking a covering $U\rightarrow S$ in $S^{\mr{log}}_{\mr{fl}}$ such that the underlying scheme of $U$ is finite over that of $S$, and $F^{\circ}_U\in (\mr{fin}/U)_c$. Since the underlying scheme of $F^{\circ}_U$ is connected, so is that of $F^{\circ}$. Now Lemma \ref{lem-app.2} applies, and we get $F^{\circ}\in (\mr{fin}/S)_c$.
\end{proof}

\begin{rmk}\label{rem-app.1}
The proof of Proposition \ref{prop-app.2} is the same as the proof of the strict henselian case in \cite[Prop. 2.7]{kat4}. However there is a small condition missing in the proof in \cite[Prop. 2.7]{kat4}, so we give a complete proof here.
\end{rmk}

Let $S$ be an fs log scheme with underlying scheme $\mr{Spec}R$, where $R$ is a noetherian henselian local ring with residue characteristic $p>0$. Let $x$ be the closed point of $S$, and suppose that $S$ admits a global chart $P\rightarrow\mc{O}_S$ which induces an isomorphism $P\rightarrow (M_S/\mc{O}_S^{\times})_{\bar{x}}$. Here $P$ is an fs monoid, and $\bar{x}$ is a classical geometric point over $x$. By Proposition \ref{prop-app.2}, any $F\in (\mr{fin}/S)_r$ is an extension of $F^{\mr{et}}\in \mr{fin}^{\mr{ket}}_{S}$ by $F^{\circ}\in (\mr{fin}/S)_c$. By Theorem \ref{thm-app.2}, $F^{\mr{et}}$ can be understood by the theory of logarithmic fundamental group. Let $S'$ be a Galois cover of $S$ with Galois group $G:=\pi_1^{\mr{log}}(S)/\pi_1^{\mr{log}}(S')$, such that $F^{\mr{et}}\times_S S'\in (\mr{fin}/S')_c$. Then one can recover $F$ from $F\times_S S'$ and the $G$-action on $F\times_S S'$ induced from the $G$-action on $F^{\mr{et}}\times_S S'$.

In order to understand $F$, we may assume $F^{\mr{et}}\in (\mr{fin}/S)_c$ after replacing $S$ by $S'$. Hence to understand the category $(\mr{fin}/S)_r$, we are reduced to understand the extensions of a classical finite \'etale group scheme by a classical connected finite flat group scheme on $S^{\mr{log}}_{\mr{fl}}$. This is done by Kato's classification theorem of logarithmic finite flat group schemes as follows.

\begin{thm}\label{thm-app.3}
Let $S$ be as above. Let $G$ be a classical finite \'etale group scheme over $S$, $H$ a classical finite flat group scheme over $S$. We denote by $\mathfrak{Ext}_{S^{\mr{log}}_{\mr{fl}}}(G,H)$ (resp. $\mathfrak{Ext}_{S^{\mr{cl}}_{\mr{fl}}}(G,H)$) the category of extensions of $G$ by $H$ on $S^{\mr{log}}_{\mr{fl}}$ (resp. $S^{\mr{cl}}_{\mr{fl}}$), and $\mathfrak{Hom}_{S^{\mr{cl}}_{\mr{fl}}}(G(1),H)\otimes P^{\mr{gp}}$ the discrete category associated to the group $\mr{Hom}_{S^{\mr{cl}}_{\mr{fl}}}(G(1),H)\otimes P^{\mr{gp}}$. Then we have an equivalence of categories
$$\Phi:\mathfrak{Ext}_{S^{\mr{cl}}_{\mr{fl}}}(G,H)\times \mathfrak{Hom}_{S^{\mr{cl}}_{\mr{fl}}}(G(1),H)\otimes P^{\mr{gp}} \xrightarrow{\cong} \mathfrak{Ext}_{S^{\mr{log}}_{\mr{fl}}}(G,H) .$$
\end{thm}
\begin{proof}
See \cite[Thm. 2.3.1]{mad1}. See also \cite[Thm. 3.3]{kat4} for the strict henselian case.
\end{proof}

\bibliographystyle{alpha}
\bibliography{mybib}

\Addresses
\end{document}